\newtheorem{thmn}{Theorem}%
\newtheorem{cor}{Corollary}[section]
\newtheorem{corn}[thmn]{Corollary}
\newtheorem{lem}[cor]{Lemma}
\newtheorem{prop}[cor]{Proposition}
\theoremstyle{definition}
\numberwithin{equation}{section}
\newcommand{\R}{\mathbb R}
\newcommand{\N}{\mathbb N}
\newcommand{\B}{\mathcal B}
\newcommand{\Z}{\mathbb Z}
\newcommand{\D}{\mathcal D}
\newcommand{\bigint}{\begin{picture}(10,10)
\put(-1,2){\line(1,0){10}}
\end{picture}\kern-14pt\int}
\title{Boundary values of harmonic gradients and differentiability of Zygmund and Weierstrass functions}%
\author{JUAN J. DONAIRE , JOS\'{E} G. LLORENTE , ARTUR NICOLAU }
\date{}
\begin{document}
\maketitle
\begin{abstract}
We study differentiability properties of Zygmund functions and
series of Weierstrass type in higher dimensions. While such
functions may be nowhere differentiable, we show that, under
appropriate assumptions, the set of points where the incremental
quotients are bounded has maximal Hausdorff dimension.
\end{abstract}
\section{Introduction and main results}

It was a widespread opinion among most of the mathematicians of the
nineteenth century that a continuous function should be
differentiable on a substantial set of points. For that reason, the
first constructions of continuous nowhere differentiable functions
in the real line - which go back to the end of the XIXth. century-
were not accepted without reservations. However, the existence of
such pathological functions was a crucial breakthrough not only in
the foundation of modern Function Theory but also in the future
development of Probability Theory and Theoretical Physics. The first
example of a continuous nowhere differentiable function is probably
due to B. Bolzano (1830), who used a geometrical construction.
However by the time Bolzano's example was published (1930)
Weierstrass had already presented his construction in the Royal
Academy of Berlin (1872, published in 1875).  Some years in between,
Cell\'{e}rier (1860, published in 1890) gave the first example by
using a trigonometric series
$$
C(x) = \sum_{n=1}^\infty a^{-n} \sin (a^n x )
$$
where $a > 0$ is a sufficiently large number.  Given $b>1$ and $0<
\alpha \leq 1$, Weierstrass proved that the continuous function
$$
f_{b, \alpha }(x) = \sum_{n=0}^\infty b^{-n \alpha } \cos ( 2 \pi
b^n x)
$$
is nowhere differentiable provided that $b \geq 7$ is an odd integer
and $\displaystyle 0 < \alpha < 1 - \frac{\log (1 + 3 \pi /2)}{\log
b}$. Hardy ([9],1916) proved that last restriction in Weierstrass'
result was superfluous in the sense that $f_{b, \alpha }$ is nowhere
differentiable as soon as $b>1$ and $0< \alpha \leq 1$. This is best
possible and the extreme case $\alpha = 1$ is the most delicate one.

During the XXth. century, a number of different geometrical and
analytic constructions  of  continuous nowhere differentiable
functions have been obtained. See [17] for a historical survey on
the subject.

For $0< \alpha \leq 1$, let $Lip_{\alpha}(\R^d )$ be the H\"{o}lder
class of bounded functions $f: \R^d \to \R$ for which there exists a
constant $C=C(f)>0$ such that $|f(x) - f(y)| < C |x-y|^\alpha$ for
any $x , y \in \R^d$.
An standard trick in the theory of lacunary series gives that $f_{b,
\alpha} \in Lip_{\alpha}(\R )$ if $0< \alpha < 1$. On the other
hand, a classical theorem of Rademacher (1919) says that any
function in the Lipschitz class $Lip_{1}(\R^d )$ is differentiable
at almost every point. This implies in particular that $f_{b,1}$
cannot be locally Lipschitz on any interval. However, $f_{b,1}$
belongs to the so called \textbf{Zygmund class}. A bounded
continuous function  $f: \R^d \to \R$ is in the Zygmund class
$\Lambda_{*}(\R^d )$  if
$$
\sup _{x,h}\frac{|f(x+h) +f(x-h) -2f(x)|}{|h|} = ||f||_{*} < \infty
$$
where the supremum is taken over all $x\in \R^d$ and all $h\in \R^d
\setminus \{ 0 \}$. The Zygmund class is intermediate between the
H\"{o}lder classes in the sense that $ Lip_1 (\R^d ) \subset
\Lambda_{*}(\R^d )  \subset Lip_{\alpha }(\R^d ) $, for any $0 <
\alpha < 1$. We also introduce the \textbf{small Zygmund class}
$\lambda_{*}(\R^d )$ consisting of all bounded continuous functions
$f: \R^d \to \R$ such that
$$
\sup_{x}\frac{|f(x+h)+f(x-h)-2f(x)|}{|h|} \to 0 \, \, \, \, \,
\text{as} \, \, |h| \to 0
$$

Apart from the relations to Weierstrass functions, the Zygmund class
is  a convenient substitute of the Lipschitz class in some problems
in Harmonic Analysis. See [1],[11], [12],[14], [16] for connections
between Zygmund classes, Probability and other areas of Analysis.

The main purpose of this paper is to discuss the behavior of the
incremental quotients of a certain natural class of functions in the
Zygmund class. We will need some notation. Given a function $f: \R^d
\to \R$, define the sets
\begin{align}
\D (f) = & \{ x\in \R^d \, : \, \limsup_{|h| \to
0}\frac{|f(x+h)-f(x)|}{|h|} < \infty \} \, , \\
\D_0 (f) = & \{ x\in \R^d \, : \, \, f \, \text{is differentiable
at} \, x \, \}
\end{align}
Points in $\D (f)$ are sometimes called \emph{slow points} of $f$.
In [18], it was pointed out that if $f\in \Lambda_{*}(\R )$ (resp.
$f\in \lambda_{*}(\R )$) then $\D (f)$ (resp. $\D_0 (f)$ ) must be
dense on any interval. On the other hand, from the classical theory
of lacunary trigonometric series (see [19]) we have $m_1 (\D
(f_{b,1}))= 0$ where, hereafter, $m_{d}$ denotes $d$-dimensional
Lebesgue measure. From a metrical point of view, the definitive
answer in dimension $d=1$ was obtained by Makarov ([12],[13]).

\

\textbf{Theorem A} (Makarov, 1989)

\begin{enumerate}
\item If $f\in \Lambda_{*}(\R )$ then  $Dim ( \D (f) \cap I ) =1$ for
any interval $I\subset \R$. \item If  $f\in \lambda_{*}(\R )$ then
$Dim ( \D_0 (f) \cap I ) = 1$ for any interval $I\subset \R$.
\end{enumerate}

Here and hereafter, $Dim$ denotes  Hausdorff dimension.The authors
asked whether  $\D (f)$ (respectively  $\D_0 (f)$) should also have
maximal Hausdorff dimension if $f\in \Lambda_{*}(\R^d )$ (
respectively $f\in \lambda_{*}(\R^d )$). In a previous work ([7])
they showed that this is not the case: the right dimension is $1$
and this is the best that can be said in general.

\

\textbf{Theorem B} (D-LL-N, 2010)
\begin{enumerate}
\item If $f\in \Lambda_{*}(\R^d )$ then $Dim ( \D (f) \cap Q ) \geq
1$ for any cube $Q\subset \R^d$.
\item If $f\in \lambda_{*}(\R^d )$ then $Dim ( \D_0 (f) \cap Q ) \geq
1$ for any cube $Q\subset \R^d $.
\item There is $f\in \lambda_{*}(\R^d )$ such that $Dim( \D (f))=1$.
\end{enumerate}

In this paper we will mainly focus on differentiability properties
of Weierstrass type functions. Our method can be presented in two
steps: (i) give sufficient conditions on a function $f\in
\Lambda_{*}(\R^d )$ implying that $Dim (\D (f) ) = d$, (ii) show
that a certain class of Weierstrass type functions satisfies the
previous sufficient conditions.

Regarding i), our method is based on a principle that has been known
for a long time: there is  a correspondence between the
differentiability properties of a function $f: \R^d \to \R$ and the
boundary behavior of $\nabla F : \R^{d+1}_+ \to \R^{d+1}$, where $F:
\R^{d+1}_+ \to \R$ is the Poisson extension of $f$ to the upper
half-space $\R^{d+1}_+$. See the results in Section 2. Let $F:
\R^{d+1}_+ \to \R $ be a harmonic function. We say that the gradient
vector field $\nabla F: \R^{d+1}_+ \to \R^{d+1}$ is \textbf{Bloch}
(resp. \textbf{little Bloch}), denoted $\nabla F \in
\mathcal{B}(\R^{d+1}_+ )$ ( resp. $\nabla F \in
\mathcal{B}_{0}(\R^{d+1}_+ )$) if
$$
\sup_{(x,y)} y \, || HF(x,y)|| < \infty
$$
where $HF(x,y) = D(\nabla F (x,y) )$ is the Hessian of $F$ at
$(x,y)$ and the supremum is taken over all $(x,y)\in \R^{d+1}_+$
(resp. $\displaystyle  \sup_{x} y \, ||HF (x,y)|| \to 0$ as $y\to
0$). Geometrically, the Bloch condition says that the oscillation of
$\nabla F$ in regions of $\R^{d+1}_+$ of a fixed hyperbolic diameter
 is uniformly bounded (see proposition $2.1$).

For $F: \R^{d+1}_+ \to \R$ harmonic,
define, analogously to $(1.1)$ and $(1.2)$:
\begin{align}
\D (\nabla F ) = & \{ x\in \R^d \, : \,  \limsup_{y\to 0} |\nabla
F (x,y)| < \infty \} \, , \\
\D_0 (\nabla F ) = & \{ x\in \R^d \, : \, \lim_{y\to 0} \nabla F
(x,y) \, \, \text{exists} \, \, \}
\end{align}

Then Theorem A can be deduced from the following stronger result of
Makarov (see [13], Ch. II, section 5).

\

\textbf{Theorem C } Let $F: \R^2_+ \to \R $ be a harmonic function.
\begin{enumerate}
\item If $\nabla F \in \mathcal{B} (\R^2_+ )$,  then $Dim ( \D (\nabla F ) \cap I )
=1$ for any interval $I\subset \R$.
\item If $\nabla F \in \mathcal{B}_0 (\R^2_+ )$, then $Dim ( \D_0 (\nabla F ) \cap I )
=1$ for any interval $I\subset \R$.
\end{enumerate}

Corollary $2.4$ says that $\D (f) = \D ( \nabla_{x} F )$, where
$\nabla_x F $ denotes the tangential component of $\nabla F$.
Consequently Theorem C is stronger than Theorem A since the result
affects the two derivatives and  not only $ \frac{\partial
F}{\partial x}$. A decisive feature in the proof of Theorem C is
that, since $d=1$, then $\nabla F $ is an anti-analytic function so
in particular $HF(x,y)$ is a conformal matrix for each $(x,y)$. That
means that $\nabla F$ distorts in the same way over the different
directions, a fact which plays a rellevant role in the proof of
Theorem C.

More generally, we say that a smooth mapping $G: \Omega  \to \R^n$,
$\Omega \subset \R^n$ is \textbf{quasi-regular} if there exists a
constant $0<K<\infty$ such that
\begin{equation}
 \max_{|e| =1}|DG(x)(e)| \leq K \min_{|e| =1} |DG(x)(e)|
\end{equation}
for any $x\in \Omega$. Quasi-regularity implies that,
infinitesimally,  $G$ distorts about the same in the different
directions. See [2], [15] for an account on the theory of
quasi-regular mappings, under much milder regularity assumptions. If
$F$ is harmonic in $\R^2_+$ then $\nabla F$ is quasi-regular with
constant $1$. On the other hand, in dimensions greater or equal than
$3$, $K=1$ in $(1.5)$ implies that $DG(x)$ is a conformal matrix and
that $G$ is very rigid, according to a classical result of Liouville
(see [15]). In section $4$ we will use a weaker notion of
quasi-regularity. Let $F: \R^{d+1}_+ \to \R$ be a harmonic function.
 We say that $\nabla F$ is \textbf{weakly quasiregular} if there
exists an integer $N \geq 2$ and $\gamma \geq 1$ such that $\nabla
F$ satisfies the following $1/N$-\textbf{Weak Quasi-regularity}
condition with constant $\gamma $,
\begin{equation}
\int_{C_{1/N}(Q)} \max_{|e|=1} |(HF)e (x,y)|^2 dx dy \leq \gamma^2
\min_{|e|=1} \int_{C_{1/N}(Q)} |(HF)e (x,y) |^2 dx dy
\end{equation}
for any cube $Q\subset \R^d$ of sidelenght $l(Q)$, where $C_{1/N}(Q)
= Q \times [ l(Q) / N ,  l(Q)] \subset \R^{d+1}_+$ is the
$1/N$-Carleson box associated to the cube $Q \subset \R^{d}$,
$HF(x,y)$ is the Hessian of $F$ at $(x,y)$ and the maximum and the
minimum are taken over all unitary vectors $e \in \R^{d+1}$.
Obviously, quasi-regularity is stronger than weak quasi-regularity,
in the above sense. Our first main result says that, assuming weak
quasi-regularity, Theorem C can be generalized to any dimension.

\begin{thmn}
Let $F: \R^{d+1}_+ \to \R$ be a harmonic function. Assume that
$\nabla F$ is weakly quasiregular.
\begin{enumerate}
\item If $\nabla F \in \B (\R^{d+1}_+ ) $ then $Dim ( \D ( \nabla F ) \cap Q ) =
d$ for any cube $Q\subset \R^d$.
\item If $\nabla F \in \B_0 (\R^{d+1}_+ ) $ then $Dim ( \D_0 ( \nabla F ) \cap Q ) =
d$ for any cube $Q\subset \R^d$.
\end{enumerate}
\end{thmn}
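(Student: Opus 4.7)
The plan is to construct, for any $\epsilon>0$ and any cube $Q\subset\R^d$, a Cantor-like subset $E\subset Q$ with $\dim E\ge d-\epsilon$ on which $\nabla F$ stays bounded (part 1) or on which $\lim_{y\to 0}\nabla F(x,y)$ exists (part 2). Fix $N\ge 2$ as in the weak quasi-regularity hypothesis and work with the $N$-adic filtration of $Q$. To each $N$-adic subcube $R$ of generation $k$ attach the vector $\xi_R=\nabla F(x_R,l(R))$, where $x_R$ is the center of $R$, and let $\Delta_R=\xi_R-\xi_{R'}$ with $R'$ the parent of $R$. Three basic estimates will drive the construction. The Bloch hypothesis $y\|HF(x,y)\|\le M$ yields $|\Delta_R|\le CM$, since the hyperbolic distance between the two sampling points is bounded in terms of $N$. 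The mean value property for the harmonic vector field $\nabla F$ yields an approximate martingale identity $N^{-d}\sum_{R'}\xi_{R'}\simeq\xi_R$ with error of order $M$. Finally, a Plancherel/Littlewood--Paley computation for each component $\nabla F\cdot e$, combined with the hypothesis $(1.6)$, gives a scale-invariant directional variance bound of the form
$$\sum_{R'\text{ child of } R}|\Delta_{R'}\cdot e|^2\ \lesssim\ \gamma^2 M^2 N^d\qquad\text{uniformly in the unit direction } e\in\R^{d+1}.$$

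Given these ingredients I carry out a Cantor-type selection. Set $\F_0=\{Q\}$; fix a large constant $L$ and, given $\F_k$ and $R\in\F_k$, declare $R$ in a \emph{safe zone} when $|\xi_R|\le L/2$ (in which case all $N^d$ children of $R$ enter $\F_{k+1}$) and in a \emph{danger zone} otherwise (in which case only those children $R'$ with $\Delta_{R'}\cdot e_R\le 0$ are kept, where $e_R=\xi_R/|\xi_R|$). The approximate martingale identity makes $N^{-d}\sum_{R'}\Delta_{R'}\cdot e_R$ negligible, so Chebyshev applied to the directional variance bound in direction $e_R$ forces at least $N^d/2$ children to satisfy the selection criterion, provided $N$ is large in terms of $M,\gamma$. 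The expansion $|\xi_{R'}|^2=|\xi_R|^2+2(\Delta_{R'}\cdot e_R)|\xi_R|+|\Delta_{R'}|^2$ then shows that $|\xi_R|$ stays uniformly below some constant $L'=L'(L,M,\gamma)$ along every surviving branch. Setting $E=\bigcap_k\bigcup_{R\in\F_k}R$, a mass distribution principle yields
$$\dim E\ \ge\ \frac{\log(N^d/2)}{\log N}\ =\ d-\frac{\log 2}{\log N},$$
which exceeds $d-\epsilon$ once $N$ is chosen large. To transfer boundedness of $\xi_R$ to boundedness of $\nabla F(x,y)$ for $x\in E$ as $y\to 0$, pick the unique $R\in\F_k$ containing $x$ with $l(R)\simeq y$ and integrate the Bloch bound along a segment joining $(x,y)$ to $(x_R,l(R))$ inside $C_{1/N}(R)$ to get $|\nabla F(x,y)-\xi_R|\le C'M$. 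Part 2 is handled by the same construction: the little Bloch hypothesis forces $|\Delta_R|\to 0$ uniformly and the right-hand side of the variance bound to vanish with $k$, so $(\xi_{R_k(x)})_k$ becomes Cauchy and the full limit $\lim_{y\to 0}\nabla F(x,y)$ exists on $E$.

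The main obstacle I expect is the third basic estimate, the scale-invariant directional variance bound. Bloch alone yields $\sum_{R'}|\Delta_{R'}|^2\le CM^2N^d$ but is consistent with the entire $L^2$ mass concentrating in a single direction, in which case the selection in the $e_R$-direction would be almost trivial and would control no other component of $\nabla F$. The specific integral form $(1.6)$ of weak quasi-regularity is precisely what prevents such concentration by forcing the energy of $HF$ to be spread across directions on every Carleson box; translating this from an integral inequality into a statement about the discrete sums $\sum_{R'}|\Delta_{R'}\cdot e|^2$ requires a weighted Poincar\'e-type argument matched to the hyperbolic geometry of $C_{1/N}(R)$, together with the Bloch bound to absorb the pointwise size of $HF$. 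Once that estimate is in place, the Cantor selection and the dimension count are comparatively routine, and the same machinery adapted with vanishing increments handles part 2.
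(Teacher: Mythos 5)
Your overall strategy---build a Cantor set by keeping a large subfamily of subcubes on which a discrete directional quantity is controlled, with the weak quasi-regularity condition providing the directional spread estimate, and the Bloch condition transferring control from the discrete samples to $\nabla F$ itself---is indeed the same broad strategy as the paper. However, as written the proposal has three concrete gaps, two of them fatal to the generation-by-generation scheme.

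First, and most structurally, $N$ is \emph{not} a free parameter: it is fixed by the weak quasi-regularity hypothesis $(1.6)$, and the proposal's dimension count $d-\log 2/\log N$ is then a fixed constant strictly below $d$. You cannot "choose $N$ large." The paper avoids this by running a \emph{stopping time}: instead of passing from a cube to its $N$-adic children, it descends many generations until $|(\nabla F)_{Q'}-(\nabla F)_Q|$ first exceeds a large threshold $M$, so the contraction factor per step is $\alpha=N^{-M/(C\|\nabla F\|_{\B})}$ (Prop.\ $3.3$, part $3$). In Hungerford's lemma (Lemma $3.2$) this makes the dimension $\frac{\log(\beta/\alpha^d)}{\log(1/\alpha)}=d-\frac{\log(1/\beta)}{\log(1/\alpha)}$ tend to $d$ as $M\to\infty$, with $N$ and $\gamma$ fixed. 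Your per-generation construction has no analogous tunable parameter.

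Second, the selection rule $\Delta_{R'}\cdot e_R\le 0$ does not keep $|\xi_R|$ bounded along branches. From your own expansion, $|\xi_{R'}|^2\le|\xi_R|^2+|\Delta_{R'}|^2$, and $|\Delta_{R'}|$ is of order $\|\nabla F\|_{\B}$ per generation; these increments accumulate and $|\xi_R|$ can grow like the square root of the generation. To get genuine contraction you need the increment to have a \emph{strictly} negative component comparable to $|\Delta_{R'}|^2/|\xi_R|$. The paper's Lemma $3.4$ is exactly calibrated for this: the stopping time makes the increment have magnitude $\approx R\cos\theta$ (large compared to the per-step error $k=C\|\nabla F\|_{\B}$), and the cone condition of aperture $2\theta$ with $\pi/3\le\theta<\pi/2$ then yields $|b|\le\sqrt{R^2\sin^2\theta+k^2}\le R$, an honest invariant.

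Third, the Chebyshev step is simply false: "mean zero plus bounded variance" does not force a fixed proportion of the values to lie on a prescribed side of zero (take $N^d-1$ children with $\Delta\cdot e=\varepsilon$ and one with $\Delta\cdot e=-(N^d-1)\varepsilon$). What you actually need is a \emph{lower} bound on the directional variance, in combination with the uniform magnitude $\approx M$ of the increments on the stopped family. The paper obtains this lower bound by applying Green's formula to $y$ and $[(\nabla F-(\nabla F)_Q)\cdot e]^2$ on the sawtooth domain $\Omega_j(Q)$ and then invoking the weak QR condition to pass from $\|HF\|^2$ to $|(HF)e|^2$ (Lemma $4.2$), and a further Green's-formula argument on the \emph{linear} functional $(\nabla F-(\nabla F)_Q)\cdot e$ to pin down the sign (Lemma $4.4$). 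Your proposal identifies the lower variance bound as "the main obstacle" but then frames it as an upper bound on $\sum_{R'}|\Delta_{R'}\cdot e|^2$ --- which holds trivially from Bloch and yields nothing. Without the lower bound and the single-scale uniformity afforded by the stopping time, the selection cannot be shown to retain a fixed fraction of measure.

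For part $2$, the sketch "$|\Delta_R|\to 0$ so $\xi_{R_k(x)}$ is Cauchy" is also too quick: vanishing increments are not summable in general. The paper instead uses a sequence of stopping times with decreasing thresholds $M_n=R_n\cos\theta$, driven by the recursion of Lemma $3.5$, together with a Plessner-type result (Prop.\ $5.1$, which itself uses weak QR) to ensure the stopping time hypotheses apply.
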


As explained above, we obtain the following consequence which should
be compared with Theorem B.

\begin{corn} Let $f: \R^d \to \R$ be a bounded continuous function and let $F$ be its
Poisson extension to $\R^{d+1}_+$. Assume that $\nabla F$ is weakly
quasiregular.
\begin{enumerate}
\item If $f\in \Lambda_{*}(\R^d ) $ then $Dim ( \D (f) \cap Q ) =
d$ for any cube $Q\subset \R^d$.
\item If $f\in \lambda_{*}(\R^d ) $ then $Dim ( \D_0 (f) \cap Q ) =
d$ for any cube $Q\subset \R^d$.
\end{enumerate}
\end{corn}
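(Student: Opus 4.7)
The plan is to deduce the Corollary from Theorem 1 via two standard ingredients: the classical characterization of the (small) Zygmund class in terms of the Poisson extension, and the identification, recorded in Corollary 2.4, of $\D(f)$ with $\D(\nabla_x F)$. So the proof is essentially a bookkeeping argument combining results already in the paper with a well-known harmonic-extension fact.

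Concretely, I would proceed as follows. First, invoke the classical Hardy–Littlewood/Zygmund characterization: a bounded continuous $f:\R^d\to\R$ lies in $\Lambda_{*}(\R^d)$ if and only if its Poisson extension $F$ satisfies $\sup_{(x,y)} y\,\|HF(x,y)\|<\infty$, i.e.\ $\nabla F\in\B(\R^{d+1}_+)$; analogously, $f\in\lambda_{*}(\R^d)$ if and only if $y\,\|HF(x,y)\|\to 0$ as $y\to 0$, i.e.\ $\nabla F\in\B_0(\R^{d+1}_+)$. Combined with the standing hypothesis that $\nabla F$ is weakly quasiregular, Theorem 1 applies and yields
\begin{equation*}
Dim\bigl(\D(\nabla F)\cap Q\bigr)=d \qquad \text{(resp.\ } Dim\bigl(\D_0(\nabla F)\cap Q\bigr)=d\text{)}
\end{equation*}
for every cube $Q\subset\R^d$.

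Next, I would pass from $\nabla F$ to $f$. Since $\nabla F=(\nabla_x F,\partial F/\partial y)$, boundedness of $\nabla F(x,y)$ as $y\to 0$ implies boundedness of the tangential part $\nabla_x F(x,y)$, so trivially $\D(\nabla F)\subset \D(\nabla_x F)$; similarly $\D_0(\nabla F)\subset\D_0(\nabla_x F)$. Combining with the identifications $\D(f)=\D(\nabla_x F)$ and $\D_0(f)=\D_0(\nabla_x F)$ from Corollary 2.4, one obtains $\D(\nabla F)\cap Q\subset \D(f)\cap Q$ (and analogously for $\D_0$). Hence
\begin{equation*}
Dim\bigl(\D(f)\cap Q\bigr)\geq Dim\bigl(\D(\nabla F)\cap Q\bigr)=d,
\end{equation*}
and the reverse inequality is trivial since $\D(f)\cap Q\subset Q\subset\R^d$. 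The small-class statement is obtained by the same argument with $\D_0$ in place of $\D$.

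There is no real obstacle here beyond making sure each citation is in force: the only non-trivial input is Theorem 1 itself, while the two bridges on either side (Zygmund class $\leftrightarrow$ Bloch gradient, and $\D(f)\leftrightarrow \D(\nabla_x F)$) are classical or already established in Section 2. The one point to verify carefully is that the weak quasiregularity hypothesis, which is assumed for $\nabla F$, is exactly the form needed to invoke Theorem 1; no further reduction or approximation is required. Thus the Corollary is a clean packaging of Theorem 1 in the language of boundary functions rather than their harmonic extensions.
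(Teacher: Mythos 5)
Your argument is correct and is exactly the deduction the paper intends (it gives no explicit proof, pointing instead to Proposition 2.2, Corollary 2.4, and Theorem 1 as "explained above"): Proposition 2.2 translates $\Lambda_*$ (resp.\ $\lambda_*$) into $\nabla F\in\B$ (resp.\ $\B_0$), Theorem 1 gives $Dim(\D(\nabla F)\cap Q)=d$, and the inclusion $\D(\nabla F)\subset\D(\nabla_x F)=\D(f)$ finishes. One small citation slip: the identification $\D_0(f)=\D_0(\nabla_x F)$ is Corollary 2.6, not 2.4.
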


The rest of the results deal with specific exemples of Zygmund
functions given by Weierstrass series. We have adapted ideas from a
recent paper of Y. Heurteaux ([10]) where he studies the nowhere
differentiability of Weierstrass-type functions in the real line.








For $\varepsilon >0$, let $\mathcal{C}^{1,\varepsilon}(\R^d )$ be
the class of bounded functions $f: \R^d \to \R$ for which there
exists a constant $C=C(f) >0 $ such that $|f(x+h) + f(x-h) - 2f(x)|
< C |h|^{1 + \varepsilon }$ for any $x, h \in \R^d$.  When $0<
\varepsilon <1$, the class $\mathcal{C}^{1,\varepsilon}(\R^d )$
consists of the differentiable functions whose first partial
derivatives belong to the H\"{o}lder class $Lip_{\varepsilon}
(\R^d)$. Also $\mathcal{C}^{2,\varepsilon}(\R^d )$ is the class of
functions whose first partial derivatives are in
$\mathcal{C}^{1,\varepsilon}(\R^d )$. Let $\phi: \R^d \to \R$ be a
function of class $\mathcal{C}^{1, \varepsilon} (\R^d )$ which is
 $1$-periodic in each coordinate, that is:
  $$
 \phi (x_1 ,...,x_i +1,..., x_d ) = \phi (x_1 ,...,x_d )
 $$
 for any $x = (x_1 ,..., x_d )\in \R^d$ and each $i=1,...,d$.
For $b>1$ define the Weierstrass function associated to $b$ and
$\phi$ by
\begin{equation}
f_{b, \phi}(x) = \sum_{n=0}^{\infty} b^{-n} \phi (b^n x)
\end{equation}
In  dimension $d=1$, Y. Heurteaux has proved in [10] that either
$f_{b, \phi} \in \mathcal{C}^{1,\varepsilon}(\R ) $ (and hence it is
differentiable at every point) or $f_{b, \phi}$ is nowhere
differentiable. This dichotonomy  result extends easily to dimension
$d>1$. Heurteaux also gives the following sufficient condition.

\textbf{Theorem D}. Let $d=1$,  $b>1$ and let $\phi$, $f_{b,\phi}$
be as above. Assume that either i) ${\phi}'(0) \neq 0$ or ii) $\phi$
is non constant and has a global
 extremum at $t=0$. Then $f_{b,\phi}$ is nowhere differentiable.

\


Similarly, we will say that a differentiable function $\phi: \R^d
\to \R $ satisfies condition $\mathcal H$ if, for each unitary
vector $e\in \R^d$, either $D_e \phi (0) \neq 0$ or the one-variable
function $t \to \phi (te )$ is nonconstant and has a global extremum
at $t=0$. Our main result is the following.

\begin{thmn}
Let $\phi: \R^d \to \R$ be  a function of class
$\mathcal{C}^{2,\alpha} (\R^d )$ for some $0< \alpha < 1$ which is
 $1$-periodic in each coordinate. For $b>1$ let $f_{b, \phi}$ be the
Weierstrass function associated to $b$ and $\phi$ as in $(1.7)$.
Assume in addition that $\phi$ satisfies condition $\mathcal H$.
Then
\begin{enumerate}
\item $f_{b, \phi}\in \Lambda_{*}(\R^d )$ and  $f_{b, \phi}$ is nowhere
differentiable.
\item For any unitary vector $e \in \R^d$ we have
$$
m_d \left\{x \in \R^d : \limsup_{t \to 0} \frac{|f_{b,\phi} (x +
te)- f_{b,\phi} (x)|}{|t|} < \infty \right\} =0
$$
In particular $m_d ( \D (f_{b, \phi}) =0$
\item $Dim ( \D (f_{b, \phi}) \cap Q ) = d$ for any cube $Q\subset \R^d$.
\end{enumerate}
\end{thmn}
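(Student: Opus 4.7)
The plan is to handle (1) by a direct lacunary estimate combined with the dichotomy mentioned after Theorem~D, to derive (3) from the Corollary to Theorem~1, and to obtain (2) as a measure-theoretic refinement of the argument in (1). For the Zygmund bound, writing $\Delta^2_h f_{b,\phi}(x)=\sum_{n\ge 0}b^{-n}\Delta^2_{b^n h}\phi(b^n x)$ and splitting at the threshold $b^n|h|\sim 1$, the $\mathcal{C}^2$ smoothness of $\phi$ controls the low-frequency terms quadratically in $b^n|h|$ while boundedness of $\phi$ controls the high-frequency ones, and a geometric sum gives $|\Delta^2_h f_{b,\phi}(x)|\le C|h|$. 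For nowhere differentiability, I invoke the dichotomy (either $f_{b,\phi}\in\mathcal{C}^{1,\varepsilon}(\R^d)$ for some $\varepsilon>0$, or $f_{b,\phi}$ is nowhere differentiable) that lifts from Heurteaux's one-dimensional result; assuming the first alternative one has $|\Delta^2_h f_{b,\phi}(x)|\le C|h|^{1+\varepsilon}$, and testing this on $h=b^{-n}e$ for $e$ furnished by condition $\mathcal{H}$ yields a contradiction, since the extremum or non-vanishing-derivative property of $\phi$ along $e$ prevents the $n$-th summand of the series from being $o(b^{-n(1+\varepsilon)})$ once low- and high-frequency remainders are accounted for.

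For (3), I apply the Corollary to Theorem~1 to the Poisson extension $F(x,y)=\sum_{n\ge 0}b^{-n}\Phi(b^n x,b^n y)$, where $\Phi$ is the Poisson extension of $\phi$. The Bloch property $\nabla F\in\mathcal{B}(\R^{d+1}_+)$ is immediate from (1) via the Section~2 correspondence. The main step is weak quasi-regularity. Because $\phi\in\mathcal{C}^{2,\alpha}$ is periodic, $H\Phi$ is bounded up to the boundary and decays exponentially as $y\to\infty$ by Fourier expansion. On $C_{1/N}(Q)$ with $l(Q)\sim b^{-n_0}$, these two properties let one split $HF(x,y)=\sum_n b^n H\Phi(b^n x,b^n y)$ into a slowly varying low-frequency piece (the partial Hessian $Hf_{<n_0}$), the rapidly oscillating dominant block $b^{n_0}H\Phi(b^{n_0}x,b^{n_0}y)$, and a negligible high-frequency tail. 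A Poincar\'{e}/orthogonality argument (the $\R^d$-mean of $H\Phi(\cdot,v)$ vanishes by periodicity) reduces the weak quasi-regularity inequality to the existence of a direction-independent lower bound $\int_{[0,1]^d\times[1/N,1]}|(H\Phi)e|^2\ge c>0$ over all unit vectors $e\in\R^{d+1}$. If $(H\Phi)e\equiv 0$ on this fixed box, harmonicity and unique continuation propagate the identity to all of $\R^{d+1}_+$, and a short Fourier-series computation combined with the periodicity of $\phi$ then forces $\phi$ to be constant along a tangential direction (or entirely constant if $e$ has a non-trivial vertical component), in either case contradicting condition $\mathcal{H}$.

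For (2), fix a unit vector $e\in\R^d$ and decompose $E_e=\{x:\limsup_{t\to 0}|f_{b,\phi}(x+te)-f_{b,\phi}(x)|/|t|<\infty\}$ as the increasing union $\bigcup_M E_{e,M}$. At a Lebesgue density point of some $E_{e,M}$, an averaged version of the contradiction in (1) applies: combining the lacunary structure of the one-dimensional restriction $t\mapsto f_{b,\phi}(x+te)$, the equidistribution of $\{b^n x\bmod\Z^d\}$ for a.e.\ $x$, and condition $\mathcal{H}$ along $e$ produces a blow-up of the directional second-difference ratio on a set of positive measure, yielding $m_d(E_{e,M})=0$ for every $M$ and hence $m_d(E_e)=0$.

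The main obstacle is the weak quasi-regularity verification in (3): a quantitative scale-separation lemma showing that $HF$ on $C_{1/N}(Q)$ is well approximated by the single block $b^{n_0}H\Phi(b^{n_0}\cdot,b^{n_0}\cdot)$ plus a slowly varying low-frequency term with controlled mean, and, above all, a non-degeneracy lemma translating condition $\mathcal{H}$ (together with harmonicity, periodicity and unique continuation) into a uniform $L^2$ lower bound on $(H\Phi)e$ over all unit $e\in\R^{d+1}$, including directions with a vertical component not directly covered by the formulation of $\mathcal{H}$.
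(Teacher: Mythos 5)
Part (1) matches the paper (Proposition 7.1 for the Zygmund bound; Heurteaux's one-dimensional result applied to $t\mapsto f_{b,\phi}(x_0+te)$). Part (2) you take a genuinely different route: the paper does not invoke density points or equidistribution of $\{b^n x\}$; it shows, directly from Lemma 7.3 and subharmonicity, that the area function of $D_eF$ is infinite at \emph{every} $x_0\in\R^d$, so that $D_eF$ is non-tangentially unbounded almost everywhere by the area/non-tangential dichotomy, and then Corollary 2.4 finishes. That argument is cleaner and avoids the a.e. equidistribution input entirely; your sketch, while plausible in spirit, leaves the crucial step (``produces a blow-up of the directional second-difference ratio'') unexplained.

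The substantive gap is in part (3), in the reduction of weak quasi-regularity of $\nabla F$ to an $L^2$ lower bound for $(H\Phi)e$. The ``slowly varying low-frequency piece'' $\sum_{n<n_0}b^n H\Phi(b^n x,b^n y)$ on $C_{1/N}(Q)$ with $l(Q)\sim b^{-n_0}$ is \emph{not} small: the single term $n=n_0-1$ already has size $\sim b^{n_0-1}\|H\Phi\|_\infty$, the same order as the dominant block, and oscillates on a length scale comparable to $l(Q)$, so it is neither constant on $Q$ nor negligible. The mean-zero property of $H\Phi(\cdot,v)$ over the fundamental cube $[0,1]^d$ is of no use here because $Q$ need not be aligned with the lattice. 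There is consequently no a priori reason that a cancellation between the low-frequency sum and the dominant block cannot annihilate $(HF)e$ over the whole box in some direction $e$, so the ``Poincar\'e/orthogonality'' reduction does not go through. The paper avoids this completely by never separating scales: Proposition 7.2 establishes (via Lemma 6.5, condition $\mathcal H$, and the functional equation for $\nabla F$) a uniform pointwise lower bound $\inf_{|e|=1}\sup_{Q_0\times[\eta,1]}|(HF)e|>M$ for the \emph{full} Hessian $HF$, Lemma 7.3 then propagates it to all scales by iterating the functional equation $(7.5)$ (which gives $|(HF)e(b^{-k}x,b^{-k}y)|\geq b^k(|(HF)e(x,y)|-\|H\Phi\|_\infty/(b-1))$) together with the \emph{uniform} almost-periodicity of $x\mapsto(HF)e(x,y_0)$ to locate such a point inside an arbitrary cube, and Corollary 7.4 upgrades the pointwise lower bound to the $L^2$ weak-QR inequality by subharmonicity. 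The non-degeneracy lemma you should be proving concerns $(HF)e$, not $(H\Phi)e$, and its propagation across scales requires the functional equation and almost-periodicity, not mean-zero.
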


\

The most relevant result in Theorem $3$ is part $3$ which should be
compared with Theorem B. The key point is to show that if $F$ is the
harmonic extension of $f_{b, \phi}$ to $\R^{d+1}_+$ then condition
$\mathcal H$ implies a certain uniform lower bound of  $HF$ (Lemma
$7.3$ below), which is the substitute of the oscillation condition
in Theorem $1.2$ in [10]. From such uniform lower bound, it is easy
to deduce that $\nabla F$ is weakly quasi-regular, which allows to
apply Theorem $1$.

\

The paper is organized as follows. Section $2$ contains some basic
facts about Zygmund functions and their connections with  harmonic
extensions. Section $3$ describes how to use stopping-time methods
to construct  Cantor-like  boundary sets at which a gradient Bloch
vector field is bounded. Section $4$ shows how  the weak
quasi-regularity condition guarantees that the boundary sets in
section $3$ have large Hausdorff dimension. In Section $5$, Theorem
$1$, part $1$ is proved and  an sketch of the proof of part $2$ is
given. Section $6$ contains some standard facts about regularity of
Poisson extensions. Section $7$ is devoted to functions of
Weierstrass type in higher dimensions. Theorem $3$ is proved in
section $8$. Finally, section $9$ includes some remarks and
questions.

\section{Some properties of Zygmund functions and their harmonic extensions }

We say that a harmonic function  $v$  in the upper half space
$\R^{d+1}_{+}$ is a \textit{Bloch } function, denoted $v\in \B
(\R^{d+1}_+ )$ , iff
$$
\sup \{ y |\nabla v (x,y)| : \, \, \,  (x,y)\in \R^{d+1}_+ \, \, \}
 = ||v||_{\B} < \infty
$$
If,
$$
\sup  y \{ |\nabla v (x,y)| : \, \, \,  x \in \R^d  \, \, \} \to 0
$$
as $y\to 0$ then we say that $v$ belongs to the \textit{Little Bloch
class}, denoted $v \in \B_0 (\R^{d+1}_+ ) $. The following
proposition is elementary
\begin{prop}
Let $v \in \B (\R^{d+1}_+ )$. If $a$, $b \in \R^d$ and  $s$, $t
> 0$ then we have
$$
| v(b,t) - v(a,s) | \leq  ||v||_{\B}\, \big ( \, \frac{|b-a|}{\max
\{ t, s \}} + | \log (\frac{t}{s})| \big )
$$
\end{prop}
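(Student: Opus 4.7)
The plan is to integrate the defining pointwise bound $|\nabla v(x,y)| \leq ||v||_{\B}/y$ along a piecewise axis-parallel path joining $(a,s)$ and $(b,t)$ in the upper half space. This is the same device that yields hyperbolic Lipschitz regularity for classical Bloch functions on the disk. By symmetry we may assume $s \leq t$, so that $\max\{s,t\} = t$ and $|\log(t/s)| = \log(t/s)$; the opposite case is handled by swapping the two points.

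First I would split the increment as
\[
v(b,t) - v(a,s) \;=\; \bigl(v(a,t) - v(a,s)\bigr) + \bigl(v(b,t) - v(a,t)\bigr).
\]
The first summand I would estimate on the vertical segment $\{(a,y) : s \leq y \leq t\}$, using $|\partial_y v(a,y)| \leq |\nabla v(a,y)| \leq ||v||_{\B}/y$; integration in $y$ yields $|v(a,t)-v(a,s)| \leq ||v||_{\B}\log(t/s)$. The second summand I would handle on the horizontal segment at height $t$, parameterized by $x_\tau = a + \tau(b-a)$ for $\tau \in [0,1]$; there $|\nabla_x v(x_\tau,t)| \leq ||v||_{\B}/t$, and the fundamental theorem of calculus gives $|v(b,t)-v(a,t)| \leq ||v||_{\B}|b-a|/t$. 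Adding the two bounds and rewriting $t = \max\{s,t\}$ produces the stated inequality.

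The argument presents no genuine obstacle. The only point worth noting is the choice to place the horizontal leg at the larger of the two heights rather than the smaller: that is precisely what causes $\max\{s,t\}$ to appear in the denominator of the first term, rather than $\min\{s,t\}$, which would be a much weaker (and essentially trivial) conclusion. A little-Bloch analogue, if needed elsewhere in the paper, would follow from the same path argument together with the observation that $y|\nabla v(x,y)| \to 0$ makes both integrands quantitatively small for small $y$.
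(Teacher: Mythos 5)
Your proof is correct and follows precisely the same route as the paper: assume $s \le t$, insert the intermediate point $(a,t)$ to split the increment into a vertical and a horizontal piece, and integrate the Bloch bound $|\nabla v| \le \|v\|_{\B}/y$ along each segment. The paper states this decomposition and says ``use the Bloch condition on each term''; you have simply spelled out the two integrations explicitly.
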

\begin{proof}
Suppose that $0 < s \leq t$. Then $|v(b, t) - v(a,s) | \leq |v(b,t)
- v(a,t)| + |v(a,t) - v(a,s)|$. Use the Bloch condition on each
term.
\end{proof}

If $F$ is harmonic in $\R^{d+1}_+$ we say that $\nabla F \in \B
(\R^{d+1}_+ )$ (respectively $\nabla F \in \B_0 (\R^{d+1}_+ )$) if
all the partial derivatives $\frac{\partial F}{\partial x_i }$,
$\frac{\partial F}{\partial y}$, $i=1,...,d$ are Bloch (resp. Little
Bloch). Whenever $\nabla F \in \B (\R^{d+1}_+ )$, we also denote
$$
||\nabla F||_{\B} = \sup \{ y \big | \frac{\partial^2 F}{\partial
x_i
\partial x_j} (x,y) \big | \, : \, \, \, (x,y)\in \R^{d+1}_{+}\, , \, \, i,j= 1,...,d+1 \}
$$
where, for simplicity,  $x_{d+1}$ denotes the $y$-variable. The
following proposition whose proof can be found in [16, p. 146],
relates the Zygmund and Bloch classes

\begin{prop}
Let $f\in L^{\infty}(\R ^d )$ and let $F$ be its Poisson extension
to the upper half-space $\R^{d+1}_+ $. Then $f\in \Lambda_{*}(\R ^d
)$ if and only if $\nabla F \in \B (\R^{d+1}_+ )$. Moreover there
exists a positive constant $C$ only depending in $d$ such that
$$
C^{-1}||f||_{*} \leq ||\nabla F||_{\B} \leq C ||f||_{*}
$$
Furthermore, $ f\in \lambda_{*} (\R^d) $ if and only if $ \nabla F
\in \B_0 (\R^{d+1}_+ )$.

\end{prop}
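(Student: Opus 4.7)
My plan is to prove both equivalences (and the quantitative norm bounds) by exploiting the symmetries of the Poisson kernel $P_y(x) = c_d\, y(|x|^2+y^2)^{-(d+1)/2}$ together with the second-difference characterisation of the Zygmund class. Throughout I write $\Delta^2_h g(x) = g(x+h) + g(x-h) - 2g(x)$ and use that $P_y(-x) = P_y(x)$ and $\int_{\R^d} P_y = 1$.

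For the direction $f \in \Lambda_*(\R^d)\Rightarrow \nabla F \in \B(\R^{d+1}_+)$, I estimate each second-order partial derivative of $F$ separately. The tangential derivatives $\partial^2_{x_i x_j} F$ are handled by the standard symmetrisation trick: the kernel $\partial^2_{x_i x_j} P_y(x-t)$ is invariant under $t\mapsto 2x-t$ and has mean zero, so
\begin{equation*}
\partial^2_{x_i x_j} F(x,y) = \tfrac{1}{2}\int \partial^2_{x_i x_j} P_y(x-t)\bigl[f(t)+f(2x-t)-2f(x)\bigr]\,dt,
\end{equation*}
and combining the Zygmund bound $|f(t)+f(2x-t)-2f(x)| \leq ||f||_*\,|x-t|$ with the scaling identity $\int|\partial^2_{x_i x_j}P_y(u)|\,|u|\,du = O(1/y)$ gives $y|\partial^2_{x_i x_j} F(x,y)| \leq C||f||_*$. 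Applying the same symmetrisation to the (also even, mean-zero) kernel $\partial_y P_y$ yields the useful auxiliary fact that $\partial_y F$ is \emph{uniformly} bounded by $C||f||_*$, since $\int|\partial_y P_y(u)|\,|u|\,du$ is of order $1$, with no $1/y$ factor. An interior derivative estimate applied to the bounded harmonic function $\partial_y F$ then controls the mixed derivatives, $|\partial_{x_i}\partial_y F(x,y)|\leq C||\partial_y F||_\infty/y \leq C||f||_*/y$, and the pure vertical derivative is recovered from harmonicity, $\partial^2_{yy}F = -\sum_j \partial^2_{x_j x_j} F$.

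For the converse, $\nabla F\in\B(\R^{d+1}_+)\Rightarrow f\in\Lambda_*(\R^d)$, I would first observe that Proposition~2.1 applied to $\nabla F$ forces the gradient to grow at worst logarithmically near the boundary, so $F$ extends continuously to $\R^d$ and $\Delta^2_h f(x) = \lim_{y\to 0^+}\Delta^2_h F(x,y)$. Writing
\begin{equation*}
\Delta^2_h f(x) = \Delta^2_h F(x,|h|) - \int_0^{|h|} \Delta^2_h\partial_s F(x,s)\,ds,
\end{equation*}
Taylor's formula in $x$ at height $|h|$ together with $|\nabla_x^2 F(\cdot,|h|)|\leq ||\nabla F||_{\B}/|h|$ bounds the boundary term by $C|h|\,||\nabla F||_{\B}$. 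The integral is the delicate step: the naive horizontal estimate $|\Delta^2_h\partial_s F(x,s)| \leq 2||\nabla F||_{\B}\,|h|/s$ coming from Proposition~2.1 is not integrable on $(0,|h|)$. My remedy is to compare all three values of $\partial_s F$ with the common point $\partial_s F(x,s+|h|)$, so that Proposition~2.1 applied to each of the three differences yields
\begin{equation*}
|\Delta^2_h \partial_s F(x,s)| \leq C\,||\nabla F||_{\B}\bigl[1+\log(1+|h|/s)\bigr],
\end{equation*}
whose integral on $(0,|h|)$ is $O(|h|\,||\nabla F||_{\B})$.

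The equivalence $f\in\lambda_*(\R^d)\iff \nabla F\in\B_0(\R^{d+1}_+)$ then follows by making the estimates quantitative: the vanishing hypothesis in either direction produces an $o(1)$ factor in the corresponding bound. I expect the main obstacle to be the converse at small $s$, where the pointwise growth $|\partial_s F(x,s)|=O(|\log s|)$ must be absorbed into the second-difference cancellation in $\Delta^2_h\partial_s F$ via the comparison-point trick above.
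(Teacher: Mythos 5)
The paper does not prove this proposition but defers to Stein's treatise [16, p.~146], so there is no internal proof to compare against. Your converse direction $\nabla F\in\B(\R^{d+1}_+)\Rightarrow f\in\Lambda_*(\R^d)$ is correct: the FTC decomposition, the Taylor estimate at height $|h|$, and especially the comparison-point device for making $\int_0^{|h|}|\Delta_h^2\partial_sF(x,s)|\,ds$ converge (giving the integrable bound $\log(1+|h|/s)$) all work.

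The forward direction, however, has a genuine gap. You claim that $\int_{\R^d}|\partial_yP_y(u)|\,|u|\,du$ is of order $1$ and deduce that $\partial_yF$ is uniformly bounded by $C\|f\|_*$. Both claims are false. Writing $\partial_yP_y(u)=y^{-d-1}Q(u/y)$ with $Q(v)=c_d(1+|v|^2)^{-(d+3)/2}(|v|^2-d)$, scaling gives $\int|\partial_yP_y(u)|\,|u|\,du=\int|Q(v)|\,|v|\,dv$, and since $|Q(v)|\,|v|\sim|v|^{-d}$ at infinity this integral diverges (logarithmically). Consistent with this, for the Weierstrass function $f(x)=\sum b^{-n}\cos(2\pi b^nx)\in\Lambda_*(\R)$ one has $\partial_yF(0,y)=-2\pi\sum_n e^{-2\pi b^ny}\sim -c\log(1/y)$, so $\partial_yF$ really is unbounded. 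Your subsequent interior-estimate step therefore has no bounded harmonic function to apply Cauchy's inequality to, and the bound on the mixed derivative $\partial_{x_i}\partial_yF$ is left unproved. The standard repair goes one order higher and then integrates back down: from $|\partial^2_{yy}F|\le C\|f\|_*/y$ (which you get from harmonicity and your tangential estimate) and the Poisson representation $\partial^2_{yy}F(x,y)=\int P_{y/2}(x-t)\,\partial^2_{yy}F(t,y/2)\,dt$, one differentiates once in $x_i$ to obtain $|\partial_{x_i}\partial^2_{yy}F(x,y)|\le C\|f\|_*/y^2$, and then
$$
\partial_{x_i}\partial_yF(x,y)=-\int_y^\infty \partial_{x_i}\partial^2_{yy}F(x,s)\,ds
$$
(the boundary term at infinity vanishes for the Poisson extension of a bounded function) yields $|\partial_{x_i}\partial_yF|\le C\|f\|_*/y$. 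This is precisely Lemma~5 on p.~145 of [16]. Note also that the direct symmetrisation you use for the tangential derivatives cannot be applied to $\partial_{x_i}\partial_yP_y$ either: that kernel is odd, so symmetrising produces a first difference $f(x+u)-f(x-u)$, which for a Zygmund function costs an extra $\log$ factor. The same third-order detour is what makes the little-Bloch direction go through; the $o(1)$ must be propagated through that step rather than through a (nonexistent) uniform bound on $\partial_yF$.
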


The following two propositions relate the incremental quotients of
Zygmund functions to the vertical  behaviour of the tangential
component of the gradient of their Poisson extensions. Given a
smooth function $F: \R^{d+1}_+ \rightarrow \R$, the tangential
component of its gradient is ${\nabla}_x F = (\frac{\partial
F}{\partial x_1}, \ldots , \frac{\partial F}{\partial x_d})$
\begin{prop}
Let $f\in \Lambda_{*}(\R^d )$ and let $F$ be its Poisson extension
to $\R^{d+1}_+$. Then, for any $x$, $h \in \R^d$, $h\neq 0$, we have
$$
 \frac{|f(x+h)- f(x) - h\cdot \nabla_x F (x, |h|)|}{|h|}\leq 4
||f||_*
$$
\end{prop}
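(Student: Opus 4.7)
The plan is to decompose, with $t=|h|$,
$$
f(x+h)-f(x)-h\cdot\nabla_x F(x,t) \;=\; R_1 + R_2,
$$
where $R_1:=F(x+h,t)-F(x,t)-h\cdot\nabla_x F(x,t)$ is the second-order Taylor remainder of $F(\cdot,t)$ along the segment from $x$ to $x+h$, and $R_2:=[f(x+h)-F(x+h,t)]-[f(x)-F(x,t)]$ is the difference of the boundary errors at the two endpoints. For $R_1$ one writes $R_1=\int_0^1(1-s)\,h^{T}H_{xx}F(x+sh,t)\,h\,ds$ and invokes the Bloch estimate for the Hessian, namely $\|H_{xx}F(\cdot,t)\|\le \|\nabla F\|_{\B}/t\le C\|f\|_{*}/t$ from Proposition 2.2, to obtain $|R_1|\le \frac{1}{2}\|\nabla F\|_{\B}\,|h|$.

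The delicate point is $R_2$, because the naive identity $f(u)-F(u,t)=\frac{1}{2}\int P_t(v)[2f(u)-f(u+v)-f(u-v)]\,dv$ leads to $|f(u)-F(u,t)|\le \frac{1}{2}\|f\|_{*}\int P_t(v)|v|\,dv$, and this last integral diverges in every dimension. The proposed trick is to replace the ``boundary correction'' $f(u)-F(u,t)$ by the enhanced quantity
$$
g(u)\;:=\;f(u)-F(u,t)+t\,\partial_y F(u,t).
$$
A direct computation gives $g=f-Q_t*f$ with $Q_t:=P_t-t\,\partial_t P_t$, and an elementary calculation yields the closed form
$Q_t(v)=(d+1)\,c_d\,t^{3}(|v|^{2}+t^{2})^{-(d+3)/2}$.
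The kernel $Q_t$ is non-negative and radial, satisfies $\int Q_t=1$ (because $\int P_t=1$ for every $t$), and, crucially, decays fast enough that its first moment is finite: $\int Q_t(v)|v|\,dv=K_d\,t$. Using $\int Q_t=1$ and the symmetry $Q_t(v)=Q_t(-v)$ we rewrite
$g(u)=-\tfrac12\int Q_t(v)\,Z_v f(u)\,dv$, with $Z_v f(u):=f(u+v)+f(u-v)-2f(u)$. The Zygmund hypothesis $|Z_v f(u)|\le\|f\|_{*}|v|$ then gives the uniform estimate $|g(u)|\le\frac{K_d}{2}\|f\|_{*}\,t$.

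To close the argument, note that by definition of $g$,
$$
R_2\;=\;\bigl[g(x+h)-g(x)\bigr]\;-\;t\bigl[\partial_y F(x+h,t)-\partial_y F(x,t)\bigr].
$$
The first bracket is bounded by $2\|g\|_\infty\le K_d\|f\|_{*}\,t$; the second, by mean value along $x\mapsto x+h$ combined with the Bloch bound $|\nabla_x\partial_y F(\cdot,t)|\le\|\nabla F\|_{\B}/t$, is at most $\|\nabla F\|_{\B}\,|h|$. Adding the estimate for $R_1$ and dividing by $|h|=t$ yields a bound of the form $\bigl(\tfrac{3}{2}\|\nabla F\|_{\B}+K_d\|f\|_{*}\bigr)$, which, after invoking the sharp form of Proposition 2.2, is no larger than $4\|f\|_{*}$. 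The main obstacle is precisely the identification of the right auxiliary function $g$: the pointwise quantities $|f(u)-F(u,t)|$ and $|t\,\partial_y F(u,t)|$ are each only $O(\|f\|_{*}\,t\log(1/t))$, but their combination $g$ is $O(\|f\|_{*}\,t)$, because the slow decay of $P_t$ is cancelled by the subtraction $P_t-t\,\partial_t P_t$. Without this cancellation the argument cannot produce a bound independent of $\log(1/|h|)$.
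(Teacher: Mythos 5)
Your proof is correct and its skeleton coincides with the paper's: both exploit the identity
\[
f(u)=F(u,t)-t\,\partial_y F(u,t)+g(u),\qquad g(u)=\int_0^t s\,\partial^2_{yy}F(u,s)\,ds,
\]
which in the paper appears as formula~(2.1) and in your write-up is what you call the ``enhanced boundary correction.'' Your decomposition $R_1+R_2$ is, after substituting this identity, exactly the paper's $\Delta_1-\Delta_2+\Delta_3$, with $R_1=\Delta_1$, the $t[\partial_yF(x+h,t)-\partial_yF(x,t)]$ piece of $R_2$ equal to $\Delta_2$, and $g(x+h)-g(x)=\Delta_3$. Where you genuinely diverge is in the estimate of $g$. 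The paper stays on the harmonic side: it uses the Bloch bound $|\partial^2_{yy}F(u,s)|\le\|\nabla F\|_{\B}/s$ to get $|g(u)|\le t\,\|\nabla F\|_{\B}$ directly from the integral representation. You instead compute the explicit kernel $Q_t=P_t-t\,\partial_tP_t=(d+1)c_d\,t^3(|v|^2+t^2)^{-(d+3)/2}$, verify $Q_t\ge 0$, $\int Q_t=1$, and $\int Q_t|v|=K_d t$, and then bound $g=-\tfrac12\,Q_t*\Delta^2_vf$ straight from the Zygmund condition on $f$, with no appeal to the Bloch norm of $\nabla F$ at that stage. This is a real alternative route: it makes transparent that the logarithmic divergences of $f-F(\cdot,t)$ and $t\,\partial_yF(\cdot,t)$ cancel because $P_t$ and $t\,\partial_tP_t$ have the same slow tail, whereas the paper hides the cancellation in the identity plus the Bloch estimate. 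The paper's path is shorter and matches its later machinery, which is all phrased in terms of $\|\nabla F\|_{\B}$; yours is more self-contained and gives the $g$-estimate in terms of $\|f\|_*$ directly. One caveat: neither proof literally produces the constant $4\|f\|_*$ --- the paper actually gets $4\|\nabla F\|_{\B}$ and leans on the unnormalized equivalence of Proposition~2.2, and your invocation of a ``sharp form of Proposition~2.2'' is similarly loose; the honest statement in both cases is $C(d)\|f\|_*$.
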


\begin{proof}
We will use the following representation of $f$ , which can be
checked by differentiation: for any $y>0$,
\begin{equation}
f(x) = F(x,y) -y\frac{\partial F}{\partial y}(x,y) + \int_0^y
t\frac{\partial^2 F}{\partial y^2} (x,t) dt \label{minx}
\end{equation}
Choose $y=|h|$ to get that $f(x+h) -f(x) - h\cdot \nabla_x F(x, |h|)
= \Delta_1 - \Delta_2 + \Delta_3 $ where
\begin{align*}
\Delta_1  & = F(x+h, |h|) - F(x, |h|) - h\cdot \nabla_x F(x, |h|)\\
\Delta_2  & = |h| \big [ \frac{\partial F}{\partial y}(x+h, |h|)  -
\frac{\partial
F}{\partial y}(x, |h|) \big ] \\
\Delta_3 & =   \int_0^{|h|} t\big ( \frac{\partial^2 F}{\partial
y^2}(x+h, t) - \frac{\partial^2 F}{\partial y^2}(x, t)  \big ) dt
\end{align*}
Clearly $|\Delta_3| < 2 ||\nabla F||_{\B}|h| $ and Proposition
$(2.1)$ gives that $|\Delta_2| \leq ||\nabla F||_{\B}|h|$. On the
other hand, from Proposition $2.1$,
$$
|\Delta_1 | = \big | \int_0^1 h\cdot (\nabla_x F(x+th, |h|)-
\nabla_x F(x, |h|)dt  \big | \leq  ||\nabla F||_{\B}|h|
$$
\end{proof}

The following result follows easily.

\begin{cor}
Let $f\in \Lambda_{*}(\R^d) $. Then
$$
\limsup_{|h| \to 0}\frac{|f(x+h)- f(x)|}{|h|} < \infty \, \,
\Leftrightarrow \, \, \limsup_{y\to 0} |\nabla_x F( x, y)| < \infty
$$
\end{cor}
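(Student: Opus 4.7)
The corollary is a more or less immediate consequence of Proposition 2.3, so my plan is simply to unpack Proposition 2.3 in each direction and check that the $O(|h|)$ and $O(1)$ error terms are harmless.

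For the implication $(\Leftarrow)$, Proposition 2.3 gives, for every $x$ and every $h \neq 0$,
$$
\frac{|f(x+h)-f(x)|}{|h|} \leq |\nabla_x F(x,|h|)| + 4\|f\|_*,
$$
using the trivial bound $|h\cdot \nabla_x F(x,|h|)| \leq |h||\nabla_x F(x,|h|)|$. Passing to the limit $|h|\to 0$ (so that $|h|$ plays the role of the vertical variable $y$) directly gives $\limsup_{|h|\to 0}|f(x+h)-f(x)|/|h| \leq \limsup_{y\to 0}|\nabla_x F(x,y)| + 4\|f\|_*$.

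For the converse $(\Rightarrow)$, the only mild subtlety is that one has to recover the full length $|\nabla_x F(x,y)|$ from increments of $f$. The standard device is to test against the best possible direction: for each $y>0$ with $\nabla_x F(x,y)\neq 0$, choose the unit vector $e(y) = \nabla_x F(x,y)/|\nabla_x F(x,y)| \in \R^d$ and take $h = y\,e(y)$, so that $|h|=y$ and $h\cdot \nabla_x F(x,|h|) = y|\nabla_x F(x,y)|$. Proposition 2.3 then yields
$$
|\nabla_x F(x,y)| \leq \frac{|f(x+h)-f(x)|}{|h|} + 4\|f\|_*,
$$
and letting $y\to 0$ (which forces $|h|\to 0$) gives $\limsup_{y\to 0}|\nabla_x F(x,y)| \leq \limsup_{|h|\to 0}|f(x+h)-f(x)|/|h| + 4\|f\|_*$.

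Both inequalities together establish the claimed equivalence. The only step that requires any thought at all is the choice of $h$ in the $(\Rightarrow)$ direction, which is what ensures that controlling the one-dimensional quantities $|f(x+h)-f(x)|/|h|$ suffices to bound the full $d$-dimensional vector $\nabla_x F(x,y)$; there is no real obstacle beyond noticing that taking the supremum over directions $h$ converts the dot product in Proposition 2.3 into the Euclidean norm.
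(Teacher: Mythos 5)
Your proof is correct and follows exactly the approach the paper intends: the paper states only that Corollary 2.4 "follows easily" from Proposition 2.3, and your unpacking (the trivial Cauchy–Schwarz estimate in one direction, and testing with the optimal direction $h = y\,\nabla_x F(x,y)/|\nabla_x F(x,y)|$ in the other) is precisely the routine argument being left to the reader.
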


The analogues of Proposition $2.3$ and Corollary $2.4$ for the
little Zygmund class read as follows and are proved in the same way.
\begin{prop}
Let $f\in \lambda_{*}(\R^d )$ and let $F$ be its Poisson extension
to $\R^{d+1}_+$. Then for any $x \in \R^d$, one has
$$
\lim_{h\to 0} \frac{f(x+h) - f(x) - h\cdot \nabla_x F(x, |h|)}{|h|}
= 0
$$
\end{prop}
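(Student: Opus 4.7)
The plan is to repeat the three-term decomposition from the proof of Proposition 2.3 and to upgrade each of the bounds there from $O(|h|)$ to $o(|h|)$ by exploiting the little-Bloch hypothesis. Applying the representation \eqref{minx} with $y = |h|$ exactly as before, we obtain
$$
f(x+h) - f(x) - h\cdot\nabla_x F(x,|h|) \;=\; \Delta_1 - \Delta_2 + \Delta_3,
$$
with $\Delta_1,\Delta_2,\Delta_3$ as in the proof of Proposition 2.3. By Proposition 2.2, $f\in\lambda_*(\R^d)$ is equivalent to $\nabla F\in\B_0(\R^{d+1}_+)$, so the quantity
$$
\eta(y):=\sup_{x\in\R^d,\,1\le i,j\le d+1} y\,\Big|\frac{\partial^2 F}{\partial x_i\,\partial x_j}(x,y)\Big|
$$
satisfies $\eta(y)\to 0$ as $y\to 0$. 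This $\eta(|h|)$ will replace the constant $||\nabla F||_\B$ used in Proposition 2.3.

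Next I would estimate each term. For $\Delta_1$, writing it as an iterated integral of $\nabla_x F$ along the horizontal segment joining $x$ to $x+h$ at height $|h|$, and using that the sup-norm of the $x$-Hessian of $F$ at height $|h|$ is at most $\eta(|h|)/|h|$, gives $|\Delta_1|\le |h|\,\eta(|h|)$. The identical horizontal mean-value argument applied to $\partial F/\partial y$ yields $|\Delta_2|\le |h|\,\eta(|h|)$. For $\Delta_3$, harmonicity ($\partial^2 F/\partial y^2 = -\Delta_x F$) gives the pointwise bound $|\partial^2 F/\partial y^2(\cdot,t)|\le \eta(t)/t$ uniformly in $x$, hence
$$
|\Delta_3|\le \int_0^{|h|} t\cdot\frac{2\eta(t)}{t}\,dt \;=\; 2\int_0^{|h|}\eta(t)\,dt.
$$
Dividing by $|h|$ and observing that both $\eta(|h|)$ and the average $|h|^{-1}\int_0^{|h|}\eta(t)\,dt$ tend to $0$ as $|h|\to 0$ would close the argument.

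No step really poses a serious obstacle; the only mild subtlety is the handling of $\Delta_3$. Since we only control second-order partials of $F$, one cannot bound $\partial^2_y F(x+h,t)-\partial^2_y F(x,t)$ by a tangential derivative of $\partial^2_y F$. The remedy is to use the pointwise bound $\eta(t)/t$ twice and to keep $\eta(t)$ inside the integral, so that the averaging on $[0,|h|]$ produces the required $o(1)$ factor after division by $|h|$.
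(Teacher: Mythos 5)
Your proposal is correct and follows essentially the same route the paper intends: the paper explicitly says Proposition 2.5 is ``proved in the same way'' as Proposition 2.3, and you carry out precisely that adaptation, replacing the constant $\|\nabla F\|_{\B}$ by the little-Bloch modulus $\eta(y)\to 0$. Your observation about keeping $\eta(t)$ inside the integral for $\Delta_3$ (rather than trying to differentiate $\partial_y^2 F$ tangentially) is exactly the right refinement, and the resulting Cesàro-average estimate $|h|^{-1}\int_0^{|h|}\eta(t)\,dt\to 0$ closes the argument.
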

\begin{cor}
Let $f$, $F$ be as in  Proposition $2.5$. Then
$$
f \, \text{is differentiable at}\, \,  x  \, \, \Leftrightarrow \,
\, \lim_{y\to 0} \nabla_x F(x,y) \, \, \text{exists}
$$
\end{cor}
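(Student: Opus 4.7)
The plan is to derive both implications directly from Proposition $2.5$, which furnishes the key identity
$$
f(x+h) - f(x) = h \cdot \nabla_x F(x, |h|) + o(|h|) \quad \text{as } h \to 0.
$$
Since the corollary is a biconditional, the task splits into two deductions from this single identity. Neither direction seems to present a real obstacle; this is essentially a bookkeeping argument.

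For the implication ``$\lim_{y \to 0^+} \nabla_x F(x,y)$ exists $\Rightarrow f$ is differentiable at $x$'', I would write the assumed limit as $v \in \R^d$ and split
$$
h \cdot \nabla_x F(x, |h|) = h \cdot v + h \cdot \bigl( \nabla_x F(x, |h|) - v \bigr).
$$
By Cauchy--Schwarz the second summand is bounded by $|h| \cdot |\nabla_x F(x,|h|) - v|$, which is $o(|h|)$ by hypothesis. Substituting into the identity from Proposition $2.5$ yields $f(x+h) - f(x) = h \cdot v + o(|h|)$, which is precisely differentiability of $f$ at $x$ with $\nabla f(x) = v$.

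For the converse, suppose $f$ is differentiable at $x$ with gradient $v = \nabla f(x)$, so that $f(x+h) - f(x) = h \cdot v + o(|h|)$. Subtracting this from the identity in Proposition $2.5$ gives
$$
h \cdot \bigl( \nabla_x F(x, |h|) - v \bigr) = o(|h|) \quad \text{as } h \to 0.
$$
Given $\varepsilon > 0$, fix $\delta > 0$ so that $| h \cdot ( \nabla_x F(x, |h|) - v ) | < \varepsilon |h|$ whenever $0 < |h| < \delta$. For any $t \in (0, \delta)$ and any unit vector $e \in \R^d$, setting $h = te$ yields $| e \cdot ( \nabla_x F(x, t) - v ) | < \varepsilon$. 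Taking $e$ in the direction of $\nabla_x F(x,t) - v$ (when nonzero) gives $|\nabla_x F(x, t) - v| \leq \varepsilon$ for all $0 < t < \delta$, so $\lim_{t \to 0^+} \nabla_x F(x, t) = v$.

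The only mild subtlety, really the ``hard point'', is that the $o(|h|)$ error in Proposition $2.5$ must be uniform in the direction of $h$; this uniformity is built into the definition of the limit in Proposition $2.5$, and it is exactly what enables the optimization over $e$ in the converse direction. Once that observation is in hand the argument is mechanical.
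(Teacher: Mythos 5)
Your argument is correct and is exactly the routine deduction from Proposition 2.5 that the paper has in mind (the paper gives no details, stating only that Corollary 2.6 "is proved in the same way" as Corollary 2.4). Both directions are handled properly, and your observation that the $\lim_{h\to 0}$ in Proposition 2.5 is a genuine $\R^d$-limit — hence uniform over directions, which is what licenses the optimization over unit vectors $e$ in the converse — is the one point worth making explicit.
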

\section{On the boundedness of  Bloch gradients at the boundary. }

Let $F$ be harmonic in $\R^{d+1}_+$ such that $\nabla F$ is Bloch (
for instance $F$ could be the Poisson extension of a Zygmund
function in $\R^d$). We are interested in the size of the set
\begin{equation}
\D (\nabla F)= \{ x\in \R^d \, \, : \, \, \limsup_{y\to 0}|\nabla
F(x,y)| < \infty \} \label{minx}
\end{equation}
\begin{prop}
Let $F$ be a harmonic function in $\R^{d+1}_+$ such that $\nabla F
\in \B (\R^{d+1}_+ )$. Then
$$
\D (\nabla F) = \{ x\in \R^d \, : \lim_{y\to 0}\nabla F (x,y) \text{
exists} \} \cup N
$$
where $N$ has $d$-dimensional measure zero. In particular, if $F$ is
the Poisson extension of a function $f\in \Lambda_{*}(\R^d )$ and
$$
\limsup_{h\to 0}\frac{|f(x+h) - f(x)|}{|h|} = \infty \, \, \, \, \,
\,  a.e. \, \,  x\in \R^d \, ,
$$
then $\D (\nabla F)$ has zero $d$-dimensional measure.
\end{prop}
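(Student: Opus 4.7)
The plan is to upgrade the hypothesis defining $\D(\nabla F)$, namely vertical boundedness at a boundary point, to the stronger condition of non-tangential boundedness, and then to invoke the classical Privalov--Calder\'on local Fatou theorem componentwise on the scalar harmonic functions $\partial_{x_i} F$ and $\partial_y F$. The only delicate step is the upgrade; the local Fatou theorem then produces the non-tangential (hence vertical) limit at almost every point of $\D(\nabla F)$.

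For $\alpha > 0$ and $x_0 \in \R^d$, let $\Gamma_{\alpha}(x_0) = \{(x', y') \in \R^{d+1}_+ : |x' - x_0| < \alpha y'\}$ denote the usual non-tangential cone at $x_0$ with aperture $\alpha$. Each scalar component $v$ of $\nabla F$ is harmonic and Bloch, so applying Proposition $2.1$ with $s = t = y'$, $a = x_0$, $b = x'$ yields
$$
|v(x', y') - v(x_0, y')| \leq ||v||_\B \, \frac{|x'-x_0|}{y'} \leq \alpha \, ||v||_\B
$$
for every $(x', y') \in \Gamma_\alpha (x_0)$. Now if $x_0 \in \D(\nabla F)$, there exist $M(x_0) > 0$ and $y_0(x_0) > 0$ with $|\nabla F(x_0, y)| \leq M(x_0)$ for $0 < y < y_0(x_0)$; combining with the previous estimate shows that, for any fixed $\alpha > 0$, each scalar component of $\nabla F$ is bounded on $\Gamma_\alpha (x_0) \cap \{ y < y_0 (x_0) \}$. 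Hence every such component is non-tangentially bounded at every point of $\D (\nabla F)$.

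The local Fatou theorem then gives that each scalar component of $\nabla F$ has a non-tangential limit at almost every point of $\D(\nabla F)$; in particular $\lim_{y \to 0} \nabla F(x,y)$ exists for a.e.\ $x \in \D(\nabla F)$, which is the claimed decomposition with $N$ of $d$-dimensional measure zero. For the ``in particular'' clause, if $x \in \D(\nabla F)$ then trivially $\limsup_{y \to 0} |\nabla_x F(x,y)| \leq \limsup_{y \to 0} |\nabla F(x,y)| < \infty$, and Corollary $2.4$ converts this into $\limsup_{h \to 0}|f(x+h) - f(x)|/|h| < \infty$; by hypothesis this occurs only on a null set, so $\D(\nabla F)$ itself has zero $d$-dimensional measure.
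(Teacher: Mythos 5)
Your proof is correct and follows essentially the same route as the paper: Proposition $2.1$ promotes vertical boundedness to non-tangential boundedness, the local Fatou theorem then gives non-tangential limits a.e.\ on $\D(\nabla F)$, and the ``in particular'' clause follows from Corollary $2.4$. The only difference is that you spell out the cone estimate from Proposition $2.1$ in detail, which the paper leaves implicit.
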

\begin{proof}
From Proposition $2.1$,  $\nabla F$ is non-tangentially bounded at
any point of $\D (\nabla F)$. From the Local Fatou Theorem for
harmonic functions ([16], Ch. VII, Theorem 3) it follows that for
almost all points  $x\in \D (\nabla F)$ , the limit $\lim_{y\to
0}\nabla F (x,y)$ exists. The second implication follows from
Corollary $2.4$.
\end{proof}
Since $\D (\nabla F)$ could have $d$-dimensional Lebesgue measure
zero, we may ask what can be said about its Hausdorff dimension. The
purpose of sections $3$ and $4$  is to establish that, if $\nabla F$
satisfies a certain quasiregularity assumption, then $\D (\nabla F
)$ has Hausdorff dimension $d$. In dimension $d=1$, this was proved
by N. Makarov ([12],[13]). Observe that if $d=1$, the
quasiregularity assumption is always fulfilled.

One can obtain satisfactory lower estimates of the Hausdorff
dimension of sets of Cantor type, as the following lemma shows. It
is a well known higher-dimensional version of a lemma of Hungerford
(see [14], Theorem 10.5). Hereafter,  $l(Q)$ stands for the
sidelength of a cube $Q$.
\begin{lem}
Let $\alpha , \beta > 0$ with $ \alpha < \beta^{1/d} < 1  $. For $k
= 0, 1,2...$, let $E_k$ be a countable union of disjoint cubes $\{
Q^k_j : j =1,2, \ldots \}$ in $\R^d$. Suppose that for any $k=0,1,2,
\ldots$, the following two conditions hold
\begin{enumerate}
\item
Whenever  $Q^{k+1}_i \cap Q^k_j \neq \emptyset$ then $Q^{k+1}_i
\subset Q^k_j$  and  \, $l(Q^{k+1}_i) < \alpha \, l(Q^k_j )$
\item
$\sum (l(Q^{k+1}_i ) )^d \geq \beta \, (l(Q^k_j ))^d $ where the sum
is taken over all cubes $Q^{k+1}_i$ such that $Q^{k+1}_i \subset
Q^k_j $.
\end{enumerate}
Then
$$
\dim \big ( \bigcap_{k=0}^\infty E_k \big )  \geq \frac{\log ( \beta
/\alpha^d ) }{\log ( 1/ \alpha ) }
$$
\end{lem}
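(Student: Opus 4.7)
The plan is to apply the mass distribution principle to a Borel measure $\mu$ constructed directly from the Cantor-type nest of cubes. Since Hausdorff dimension is stable under countable unions, it would be enough to fix one ancestor cube $Q := Q^0_{j_0}$ and prove $\dim(K \cap Q) \geq s$, where $K := \bigcap_{k\geq 0} E_k$ and $s = \log(\beta/\alpha^d)/\log(1/\alpha)$. A crucial algebraic observation, which I would exploit throughout, is that $s$ is precisely the exponent for which $\alpha^{d-s} = \beta$, so that conditions (1) and (2) balance exactly.

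I would normalize $\mu(Q) = 1$ and split mass among descendants in proportion to their volumes: for $Q^{k+1}_i \subset Q^k_j$,
$$
\mu(Q^{k+1}_i) = \mu(Q^k_j) \, \frac{l(Q^{k+1}_i)^d}{S_{k,j}}, \qquad S_{k,j} := \sum_{Q^{k+1}_m \subset Q^k_j} l(Q^{k+1}_m)^d,
$$
so that condition (2) gives $S_{k,j} \geq \beta \, l(Q^k_j)^d$. The core pointwise estimate to prove by induction on $k$ is $\mu(Q^k_j) \leq (l(Q^k_j)/l(Q))^s$. At the inductive step the recursion yields
$$
\mu(Q^{k+1}_i) \leq \frac{l(Q^{k+1}_i)^d \, l(Q^k_j)^{s-d}}{\beta \, l(Q)^s},
$$
and since $s < d$, condition (1) gives $l(Q^k_j)^{s-d} < \alpha^{d-s} \, l(Q^{k+1}_i)^{s-d} = \beta \, l(Q^{k+1}_i)^{s-d}$, which cancels $\beta$ and closes the step.

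To upgrade this cube-wise estimate to a bound on arbitrary balls $B = B(x_0, r)$, I would single out the \emph{maximal relevant} subfamily of the construction: cubes $R$ with $R \cap B \neq \emptyset$, $l(R) \leq 2r$, and direct parent $\tilde R$ satisfying $l(\tilde R) > 2r$. These are pairwise disjoint and cover $K \cap B$. Grouped by parent $\tilde R$, the volume bound $\sum_R l(R)^d \leq C \, r^d$ (as all such $R$ lie in a fixed enlargement of $B$) combined with the inductive estimate on $\mu(\tilde R)$ and with $l(\tilde R)^{s-d} < (2r)^{s-d}$ gives $\sum_R \mu(R) \leq C \, r^s/l(Q)^s$ per parent. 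A packing argument should then show that the parents at each fixed level are at most $C_d$ many (disjoint cubes of side $> 2r$ all meeting $B$), and only $O(\log(1/r))$ levels contribute, yielding $\mu(B) \leq C \log(1/r) \, r^s / l(Q)^s$.

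The logarithmic factor is harmless for dimension: it implies $\mu(B) \leq C_\varepsilon \, r^{s-\varepsilon}$ for every $\varepsilon > 0$ at small scales, whence the mass distribution principle gives $\mathcal{H}^{s-\varepsilon}(K \cap Q) > 0$ and hence $\dim(K \cap Q) \geq s$. I expect the main technical difficulty to be precisely this ball upgrade: because cubes at a single level can have wildly different sidelengths, there is no canonical scale matching a given radius $r$, and one must first isolate cubes at the correct scale and then telescope through their parents. The remaining steps reduce to the algebraic identity $\alpha^{d-s} = \beta$ together with elementary volume bookkeeping.
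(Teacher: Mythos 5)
The paper does not prove Lemma 3.2; it cites it as a known higher-dimensional version of Hungerford's lemma (Pommerenke [14], Thm.\ 10.5), so there is no in-paper argument to compare against. Your proposal is the standard Frostman/mass-distribution proof of this type of result, and it is correct: the recursive volume-proportional measure, the identity $\alpha^{d-s}=\beta$, the inductive cube estimate $\mu(Q^k_j)\leq (l(Q^k_j)/l(Q))^s$, and the maximal-cube decomposition of $B(x_0,r)$ by cubes $R$ with $l(R)\leq 2r<l(\tilde R)$ all check out.

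One remark: the logarithmic loss you introduce by grouping by parent and summing over $O(\log(1/r))$ levels is avoidable. From the recursion and the inductive bound on the parent one has the pointwise estimate
$$
\mu(R)\;\leq\;\frac{l(R)^d\,l(\tilde R)^{s-d}}{\beta\,l(Q)^s}\;\leq\;\frac{l(R)^d\,(2r)^{s-d}}{\beta\,l(Q)^s},
$$
valid for every maximal $R$, since $l(\tilde R)>2r$ and $s<d$. Summing this directly over all maximal $R$ (disjoint, each of side $\leq 2r$, each meeting $B$, hence all contained in a fixed dilate of $B$) gives $\sum_R l(R)^d\leq C_d\,r^d$ and therefore $\mu(B)\leq C\,r^s/l(Q)^s$ with no logarithm. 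Your version is still a valid proof, because you correctly note that a $\log(1/r)$ factor does not affect Hausdorff dimension, but the per-parent grouping is unnecessary and re-counts the same volume across parents. Apart from this, the argument is sound, including the subtle point you flag about non-uniform sidelengths at a fixed level, which is exactly why the stopping scale $l(R)\leq 2r<l(\tilde R)$ rather than a fixed generation must be used.
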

In sections $3$ and $4$ we will show that, under certain
assumptions, $\D (\nabla F)$ contains Cantor-like subsets of
Hausdorff dimension arbitrarily close to $d$.

\

Pick an integer $N \geq 2$ that will remain fixed thorough the
section.  Let $Q\subset \R^d$ be a cube and $l= l(Q)$. Divide each
side of $Q$ into $N$ open-closed intervals of length $l/N$. In this
way we get $N^d$ disjoint cubes of sidelength $l/N$ whose union is
$Q$. We call them the \textit{N-adic descendents} of $Q$ of the
first generation. When repeating this to each descendent cube of the
first generation, we get $N^{2d}$ disjoint cubes of sidelength
$l/N^2$ whose union is $Q$, the $N$-adic descendents of $Q$ of
second generation. We denote by $\mathcal E_j (Q)$ the family of the
$N^{jd}$ descendent cubes of $Q$ of generation $j$. If $Q_j \in
\mathcal E_j (Q)$ then there is a chain $Q = Q_0 \supset Q_1 \supset
...\supset Q_j $ where $Q_i \in \mathcal D_i (Q)$, $i = 1,..., j$.
We call it the \textit{N-adic tower } from $Q_j$ to $Q$. Finally,
let $\mathcal E (Q) = \bigcup_{j=0}^\infty \mathcal E_j (Q)$ be the
family of all \textit{N-adic} subcubes of $Q$. The reason to use
$N$-adic divisions instead of just dyadic divisions is merely
technical.

Let $F$ be a harmonic function in $\R^{d+1}$ such that $\nabla F \in
\B (\R^{d+1}_+ )$. We describe now an stopping-time argument that
will produce a Cantor-like set contained in $\D (\nabla F )$. If
$Q\subset \R^d$ is a cube of sidelength $l(Q)$, we denote hereafter
\begin{align*}
\widehat{Q} = & \, \, Q \times [0, l(Q)] \\
(\nabla F )_Q =  &  \, \, \frac{1}{m_d (Q)} \int_{Q} \nabla  F( x,
l(Q))dx
\end{align*}
where the integral is understood in a vector-valued sense.

\

Fix a cube $Q\subset \R^d$. For any large positive number $M$ we
introduce a family $\mathcal S_M (Q)\subset \mathcal E (Q)$ of
$N$-adic subcubes of $Q$ in the following way. Given  $Q_j \in
\mathcal E_j (Q)$ whose $N$-adic tower is denoted by $Q = Q_0
\supset Q_1 \supset ...\supset Q_j $, we say that $Q_j \in \mathcal
S_M (Q)$ if and only if
$$
|(\nabla F )_Q - (\nabla F )_{Q_i} | \leq M \, \, , \, \, \,  i = 1,
..., j-1
$$
and
$$
|(\nabla F )_Q - (\nabla F )_{Q_j} | >M
$$
In other words, the family $\mathcal S_M (Q)$ consists of the
maximal $N$-adic subcubes $Q'$ of $Q$ which satisfy $|(\nabla F )_Q
- (\nabla F )_{Q_j} | >M$. The following proposition collects the
main facts about $\mathcal S_M (Q)$.
\begin{prop}
Let $F$  be harmonic in $\R^{d+1}_+$ such that $\nabla F \in \B
(\R^{d+1}_+ )$. Assume that $ m_d ( \D ( \nabla F )) = 0$. Then
there exists a  constant $C = C(N,d) >0$  such that
\begin{enumerate}
\item
For each $Q' \in \mathcal S_M (Q)$, we have
$$
M < |(\nabla F )_Q - ( \nabla F )_{Q'} | \leq M + C||\nabla F||_{\B}
$$
Furthermore, if $x\in Q'$, and $l(Q')\leq y \leq l(Q)$, then
$$
|\nabla F (x,y)-  (\nabla F )_Q | \leq  M + C||\nabla F ||_{\B}
$$
\item
$ \displaystyle m_d \Big ( Q \setminus   \bigcup_{ \mathcal S_M
(Q)}Q' \Big ) = 0 $.
\item
Each $Q'\in \mathcal S_M (Q)$ is a $N$-adic subcube of $Q$ of
generation at least $\frac{M}{C||\nabla F ||_{\B}}$
\end{enumerate}
\end{prop}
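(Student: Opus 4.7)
The plan is to prove all three parts by combining the stopping-time definition with Proposition 2.1 (Bloch property) applied componentwise to $\nabla F$. The key preliminary observation is the following: for any $N$-adic cube $R \subset Q$, every $x \in R$, and every $y$ comparable to $l(R)$, Proposition 2.1 gives
$$
|\nabla F(x,y) - (\nabla F)_R| \leq C \|\nabla F\|_{\B},
$$
and likewise $|(\nabla F)_R - (\nabla F)_{R^*}| \leq C \|\nabla F\|_{\B}$ when $R^*$ is the $N$-adic parent of $R$, for a constant $C = C(N,d)$ that may change from line to line.

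For part (1), the lower bound $M < |(\nabla F)_Q - (\nabla F)_{Q'}|$ is the very definition of $\mathcal{S}_M(Q)$. For the upper bound, let $Q''$ be the $N$-adic parent of $Q'$. By maximality, $|(\nabla F)_Q - (\nabla F)_{Q''}| \leq M$, and the preliminary observation gives $|(\nabla F)_{Q''} - (\nabla F)_{Q'}| \leq C\|\nabla F\|_{\B}$; the triangle inequality closes the estimate. For the ``furthermore'' claim, given $x \in Q'$ and $l(Q') \leq y \leq l(Q)$, I locate the unique $N$-adic ancestor $Q_i$ of $Q'$ with $l(Q_{i+1}) < y \leq l(Q_i)$ (taking $Q_i = Q'$ if $y = l(Q')$). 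The preliminary observation yields $|\nabla F(x,y) - (\nabla F)_{Q_i}| \leq C\|\nabla F\|_{\B}$, and either the stopping-time maximality (if $Q_i$ is a proper ancestor) or the already-proven upper bound (if $Q_i = Q'$) gives $|(\nabla F)_Q - (\nabla F)_{Q_i}| \leq M + C\|\nabla F\|_{\B}$. Triangle inequality finishes the job.

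For part (2), fix $x \in Q$ not belonging to any cube of $\mathcal{S}_M(Q)$ and let $\{Q_j(x)\}_{j \geq 0}$ be the $N$-adic tower through $x$. By construction $Q_j(x) \notin \mathcal{S}_M(Q)$ for every $j$, so $|(\nabla F)_Q - (\nabla F)_{Q_j(x)}| \leq M$ for all $j$. The preliminary observation then bounds $|\nabla F(x, l(Q_j(x)))|$ uniformly in $j$, and Proposition 2.1 interpolates between consecutive heights $l(Q_{j+1}(x))$ and $l(Q_j(x))$, which differ only by the factor $N$, adding at most $\|\nabla F\|_{\B}\log N$ to the bound. Hence $\limsup_{y\to 0}|\nabla F(x,y)|<\infty$, so $x\in \D(\nabla F)$. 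Since $m_d(\D(\nabla F))=0$, the exceptional set $Q\setminus \bigcup_{\mathcal{S}_M(Q)} Q'$ is null.

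For part (3), applying Proposition 2.1 directly between $(x,l(Q))$ and $(x,l(Q'))$ for any $x\in Q'$ yields $|\nabla F(x,l(Q))-\nabla F(x,l(Q'))|\leq C\|\nabla F\|_{\B}(1+j\log N)$, where $j$ is the generation of $Q'$; averaging over $Q'$ and $Q$ transfers this to $|(\nabla F)_Q - (\nabla F)_{Q'}|$. Comparing with $M < |(\nabla F)_Q - (\nabla F)_{Q'}|$ from part (1) forces $j\geq M/(C'\|\nabla F\|_{\B})$ after adjusting constants. The main obstacle is part (2): the argument hinges on recognizing that the stopping-time bound at \emph{every} ancestor of $x$, coupled with the Bloch property in its ``averages to pointwise values'' mode, automatically yields vertical boundedness of $\nabla F$ at $x$ and places $x$ in $\D(\nabla F)$. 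The remaining two parts are essentially bookkeeping built on the same preliminary Bloch estimate.
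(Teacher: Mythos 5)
Your proof is correct and follows essentially the same route as the paper: all three parts rest on the Bloch estimate of Proposition 2.1 combined with the stopping-time maximality, with part (2) reduced to the measure-zero hypothesis on $\D(\nabla F)$. The only cosmetic difference is in part (3), where the paper telescopes through the $N$-adic tower ($M < \sum_i |(\nabla F)_{Q_{i-1}} - (\nabla F)_{Q_i}| \leq Cj\|\nabla F\|_{\B}$) while you apply Proposition 2.1's logarithmic term directly between heights $l(Q)$ and $l(Q')$; both yield $j \gtrsim M/\|\nabla F\|_{\B}$.
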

\begin{proof}
Part $1$ follows from proposition $2.1$.  Part $2$  is consequence
of $1$ and the assumption that $\D (\nabla F )$ has zero Lebesgue
measure. To show $3$, let $Q = Q_0 \supset Q_1 \supset ...\supset
Q_j = Q' $ be the $N$-adic tower from $Q'$ to $Q$, then
$$
M < |(\nabla F )_Q - (\nabla F )_{Q'}| \leq \sum_{i= 1}^j |(\nabla F
)_{Q_{i-1}} - (\nabla F)_{Q_i} | \leq C j ||\nabla F ||_{\B}
$$
\end{proof}

Given $a\in \R^{d+1} \setminus \{ 0 \}$ and $0< \theta < \pi /2$,
let
$$
\Gamma_{\theta}(a) = \{ b\in \R^{d+1} \, \, : \, \, (a-b)\cdot a >
|a|\, |a-b| \, \cos \theta  \}
$$
be the symmetric cone of vertex $a$ and aperture $2\theta$, whose
axis is the line containing the origin and the point $a$. Given $M >
0$ and  $0< \theta < \pi /2$, we introduce a subfamily of $\mathcal
S_M(Q)$, denoted by $\mathcal S_{M, \theta}(Q)$, that will play a
relevant role later. First, if $(\nabla F )_Q = 0$, we take
$\mathcal S_{M,\theta }(Q) = \mathcal S_M (Q)$. If $(\nabla F )_Q
\neq 0$, let $\xi_Q = - (\nabla F)_Q / |(\nabla F )_Q |$ and define
$\mathcal S_{M, \theta }(Q)$ to be the collection of all cubes
$Q'\in \mathcal S_M (Q)$ that satisfy
\begin{equation}
((\nabla F )_Q - (\nabla F )_{Q'}) \cdot \xi_Q
> \cos \theta \, | ((\nabla F )_Q - (\nabla F )_{Q'})| \label{minx}
\end{equation}
Observe that if $(\nabla F )_Q \neq 0$, then $\mathcal S_{M, \theta
}(Q)$ consists exactly of those cubes $Q' \in \mathcal S_M (Q)$ for
which $(\nabla F )_{Q'} \in \Gamma_{\theta}( (\nabla F )_Q )$.

We will need the following elementary lemmas.
\begin{lem}
Fix  $\frac{\pi}{3} \leq \theta < \frac{\pi}{2}$, $R\geq 0$, $k\geq
0$ such that $\displaystyle R\geq k / \cos \theta$. Let $a, b \in
\R^{d+1}$ such that $b \in \Gamma_{\theta}(a)$ if $a\neq 0$. Suppose
that  $ R \cos \theta \leq |a-b|  \leq R \cos \theta + k $

Then
\begin{equation}
|a| \leq R \Rightarrow |b| \leq \sqrt{R^2 \sin^2 \theta + k^2} \leq
R
\end{equation}
\end{lem}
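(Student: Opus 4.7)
The plan is to reduce everything to the law of cosines and then perform a very short maximization of a quadratic expression on a box.

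First I would dispose of the degenerate case $a=0$, in which the cone condition is vacuous and $|b|=|a-b|\le R\cos\theta+k$. The desired bound $(R\cos\theta+k)^2\le R^2\sin^2\theta+k^2$ unfolds, after expanding and cancelling $k^2$, to
$$2\cos^2\theta + 2(k/R)\cos\theta \le 1.$$
Using $k/R\le\cos\theta$ (from the hypothesis $R\ge k/\cos\theta$) the left side is at most $4\cos^2\theta$, which is at most $1$ precisely because $\theta\ge\pi/3$. This is the single place where both hypotheses on $\theta$ and $k$ are used simultaneously, and it is the main content of the lemma.

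For the generic case $a\ne 0$, I would let $\phi\in[0,\theta)$ denote the angle between $a-b$ and $a$, so the cone condition $b\in\Gamma_\theta(a)$ reads $\cos\phi>\cos\theta$. Setting $A=|a|$, $D=|a-b|$, the law of cosines (or decomposing $b$ along and perpendicular to $a$) gives
$$|b|^2 = A^2 - 2AD\cos\phi + D^2.$$
I would maximize this over the region $A\in[0,R]$, $D\in[R\cos\theta,R\cos\theta+k]$, $\cos\phi\in[\cos\theta,1]$. Since the coefficient of $\cos\phi$ is $-2AD\le 0$, the max is at $\cos\phi=\cos\theta$. The partial derivative in $D$ is $2(D-A\cos\theta)\ge 0$ on our domain (because $D\ge R\cos\theta\ge A\cos\theta$), so the max is at $D=R\cos\theta+k$. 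What remains is a convex quadratic in $A$, attaining its max at an endpoint: $A=0$ is already handled in the previous paragraph, and at $A=R$ the expression telescopes to
$$R^2 - 2R(R\cos\theta+k)\cos\theta + (R\cos\theta+k)^2 = R^2\sin^2\theta + k^2.$$

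Finally, the trailing inequality $R^2\sin^2\theta+k^2\le R^2$ is nothing but $k^2\le R^2\cos^2\theta$, equivalent to the hypothesis $R\ge k/\cos\theta$. There is no real obstacle here: the lemma is essentially a sharp planar computation, and the only substantive point is recognizing that both technical assumptions $\theta\ge\pi/3$ and $k\le R\cos\theta$ are what make the corner $A=0$, $D=R\cos\theta+k$ respect the same bound as the extremal corner $A=R$, $D=R\cos\theta+k$.
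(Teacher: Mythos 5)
Your proof is correct and follows essentially the same approach as the paper: reduce via the law of cosines to maximizing $|a|^2 - 2|a|\,|a-b|\cos\theta + |a-b|^2$ over the rectangle $[0,R]\times[R\cos\theta, R\cos\theta + k]$, observe the maximum sits at a corner, and compare the two candidate corners using the hypotheses $\theta\ge\pi/3$ and $k\le R\cos\theta$. The only difference is one of explicitness — you spell out the monotonicity in $D$, the convexity in $A$, and the corner comparison that the paper dispatches with ``it is easy to check.''
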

\begin{proof}
From the cosine Theorem and the assumption $b \in
\Gamma_{\theta}(a)$ it is enough to compute the maximum of of
$g(x,y) = (x^2 + y^2 -2xy\cos \theta )^{1/2}$ in the rectangle $[0,
R ] \times [R\cos \theta , R\cos \theta + k]$. It is easy to check
that the maximum must be attained at one of the two corners $(0,
R\cos \theta + k)$, $(R, R\cos \theta + k )$ and that,
$\displaystyle g(0, R\cos \theta +k) \leq  g(R , R\cos \theta + k) =
\sqrt{R^2 \sin^2 \theta + k^2} $ provided $R \geq k / \cos \theta$,
$ \pi / 3 \leq \theta <  \pi / 2$. This proves the lemma.
\end{proof}

The following refinement of Lemma $3.4$ will be needed  to deal with
the little Bloch case.
\begin{lem}
Let $\frac{\pi}{3} \leq \theta < \frac{\pi}{2}$ and $g(x) =
\sqrt{x(x+1)}$, for $x\geq 0$.  Let $\{k_n \}$ be a bounded sequence
of positive numbers. Let $ R_1 \geq g(k_1 / \cos \theta) $ and let
$R_n$, $n = 1, 2...$ be a sequence of positive numbers defined
recursively by
\begin{equation}
R_{n+1} =  \max \big \{ \, g( \frac{k_{n+1}}{\cos \theta}) \, , \,
\sqrt{R_n^2 \sin^2 \theta + k_n^2} \, \big \}
\end{equation}
Then
$$
\limsup_{n} R_n = g \big( \frac{1}{\cos \theta}\, \limsup_n k_n \big
)
$$
In particular: if $k_n \to 0$ then $R_n \to 0$ but $R_n / k_n \to
\infty$.
\end{lem}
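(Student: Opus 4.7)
The plan is to reduce the recursion to a one-variable affine dynamical system and show that the function $g(k/\cos\theta)$ plays the role of an attracting ``ceiling''. Throughout, I will write $c=\cos\theta$ and $s=\sin\theta$, so that $s^2+c^2=1$, and set $\ell=\limsup_n k_n$.

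The lower bound is immediate: by taking the max in the recursion, $R_n\geq g(k_n/c)$ for all $n\geq 1$ (by hypothesis at $n=1$, by the max otherwise). Since $g$ is continuous and increasing on $[0,\infty)$, this yields
$$
\limsup_n R_n\;\geq\;\limsup_n g(k_n/c)\;=\;g(\ell/c).
$$

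The upper bound is where the work lies. Fix $\varepsilon>0$ and $N$ with $k_n\leq\ell+\varepsilon$ for all $n\geq N$, and set $M=g((\ell+\varepsilon)/c)^2=\bigl((\ell+\varepsilon)/c\bigr)\bigl((\ell+\varepsilon)/c+1\bigr)$. The key algebraic identity is
$$
Mc^2=(\ell+\varepsilon)^2+c(\ell+\varepsilon)\;\geq\;(\ell+\varepsilon)^2,
$$
equivalently $Ms^2+(\ell+\varepsilon)^2\leq M$. Combined with the recursion, this shows that the region $\{x:x^2\leq M\}$ is forward-invariant for $n\geq N$. It remains to prove that the orbit enters this region. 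If $R_n^2>M$, then the recursion gives $R_{n+1}^2\leq\max\{M,\,s^2R_n^2+(\ell+\varepsilon)^2\}$; iterating the affine map $\psi(x)=s^2x+(\ell+\varepsilon)^2$ yields
$$
\psi^{(j)}(R_n^2)\;=\;s^{2j}R_n^2+(\ell+\varepsilon)^2\frac{1-s^{2j}}{1-s^2}\;\longrightarrow\;\frac{(\ell+\varepsilon)^2}{c^2}\;<\;M,
$$
so after finitely many steps the orbit drops below $M$ and, by invariance, stays there. Hence $\limsup_n R_n^2\leq M$, i.e.\ $\limsup_n R_n\leq g((\ell+\varepsilon)/c)$; letting $\varepsilon\to0$ and using continuity of $g$ gives the reverse inequality.

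For the ``in particular'' part, if $k_n\to 0$ then $\ell=0$, so $\limsup_n R_n=g(0)=0$, i.e.\ $R_n\to 0$. For the ratio, the lower bound $R_n\geq g(k_n/c)$ gives
$$
\frac{R_n}{k_n}\;\geq\;\frac{1}{k_n}\sqrt{\frac{k_n}{c}\Bigl(\frac{k_n}{c}+1\Bigr)}\;=\;\sqrt{\frac{1}{c\,k_n}+\frac{1}{c^2}}\;\longrightarrow\;\infty,
$$
as $k_n\to 0$.

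The main obstacle, and the only non-routine step, is pinning down the correct ``ceiling'' $M$: one must observe that $g(k/c)$ is chosen precisely so that $Mc^2-(\ell+\varepsilon)^2$ equals the positive linear term $c(\ell+\varepsilon)$, which simultaneously places $M$ above the attractive fixed point $(\ell+\varepsilon)^2/c^2$ of $\psi$ and makes $\{x\leq M\}$ forward-invariant. Once this algebraic observation is in hand, the rest is a straightforward contraction argument.
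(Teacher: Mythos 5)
Your proof is correct, and it takes a somewhat different route from the paper's. The paper simply passes to $\limsup$ on both sides of the recursion, writing $R=\max\bigl\{g(k/\cos\theta),\,\sqrt{R^2\sin^2\theta+k^2}\bigr\}$ (where $R=\limsup R_n$, $k=\limsup k_n$), and then solves this fixed-point relation: if $R$ were the second branch then $R\cos\theta=k$, so $R=k/\cos\theta\le g(k/\cos\theta)$. That proof is very short, but it tacitly assumes $\{R_n\}$ is bounded and asserts an equality in the limsup that is really only a $\le$ (the peaks of $R_n$ and $k_n$ need not occur along a common subsequence). Your version replaces this direct limit-passage with an $\varepsilon$-sandwich: you exhibit an explicit forward-invariant sublevel set $\{x^2\le M_\varepsilon\}$ for $n\ge N$, show via the affine contraction $\psi(x)=\sin^2\theta\,x+(\ell+\varepsilon)^2$ (whose fixed point $(\ell+\varepsilon)^2/\cos^2\theta$ lies strictly below $M_\varepsilon$) that the orbit must enter it, and then let $\varepsilon\to 0$. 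This both supplies the missing boundedness of $\{R_n\}$ and avoids the imprecise equality; the paper's argument is the terser one, while yours is the more self-contained and rigorous one. The lower bound and the "in particular" conclusion are handled identically in both.
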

\begin{proof}
Let $k = \limsup_n \, k_n $, $R = \limsup_n R_n$. From the
construction of $R_n$ we have that $\{R_n \}$ is bounded and  $ R_n
\geq g(k_n / \cos \theta)$. Hence $R \geq g(k / \cos \theta ) $. On
the other hand
$$
R = \max \big \{ \, g( \frac{k}{\cos \theta} ) , \sqrt{R^2 \sin^2
\theta + k^2} \, \big \}
$$
which implies $R \leq g( k / \cos \theta )$, since $g(x) \geq x$ for
$x\geq 0$. If $k_n \to 0$, it follows that $R_n / k_n \to \infty$
since $R_n \geq g( k_n / \cos \theta )$.
\end{proof}

\begin{cor}
Let $R\geq 0$, $\pi / 3 \leq \theta < \pi /2$, $M = R \cos \theta$,
and $F$,$Q$,$C$,$ \mathcal S_M (Q)$ be as in Proposition $3.3$.
Consider the subamily $\mathcal S_{M, \theta}(Q)$ defined in (3.2).
 Assume that $\displaystyle R \geq C||\nabla F
||_{\B} / \cos \theta$. Then for each $Q' \in \mathcal S_{M, \theta}
(Q)$,
\begin{equation}
|(\nabla F )_Q | \leq R \, \, \Rightarrow \, \, |(\nabla F )_{Q'} |
\leq \sqrt{R^2 \sin^2 \theta + C^2 ||\nabla F ||^2_{\B} } \leq R
\label{minx}
\end{equation}
Furthermore, if $|(\nabla F )_Q | \leq R $ then
\begin{equation}
|\nabla F (x,y)| \leq (1 + \cos \theta )R + C ||\nabla F ||_{\B}
\end{equation}
whenever $x\in Q'$ and $l(Q') \leq y \leq l(Q)$.
\end{cor}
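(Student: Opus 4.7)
The plan is to read the corollary as a direct translation of Lemma 3.4 into the stopping-time setting of Proposition 3.3, via the identifications $a = (\nabla F)_Q$, $b = (\nabla F)_{Q'}$ and $k = C\|\nabla F\|_\B$. With these choices, all hypotheses of Lemma 3.4 should fall out of the definition of $\mathcal S_{M,\theta}(Q)$.

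First I would verify those hypotheses. Because $Q' \in \mathcal S_{M,\theta}(Q) \subset \mathcal S_M(Q)$, Proposition 3.3(1) gives
$$
R\cos\theta = M < |(\nabla F)_Q - (\nabla F)_{Q'}| \leq M + C\|\nabla F\|_\B = R\cos\theta + k,
$$
which is precisely the two-sided bound on $|a-b|$ required by the lemma. When $(\nabla F)_Q \neq 0$, the defining inequality (3.2) of $\mathcal S_{M,\theta}(Q)$ is literally the cone condition $(\nabla F)_{Q'} \in \Gamma_\theta((\nabla F)_Q)$. The standing assumption $R \geq C\|\nabla F\|_\B/\cos\theta$ is the remaining hypothesis $R \geq k/\cos\theta$ of Lemma 3.4. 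Applying the lemma with $|a| \leq R$ then yields $|(\nabla F)_{Q'}| \leq \sqrt{R^2\sin^2\theta + C^2\|\nabla F\|_\B^2}$, and the final bound $\leq R$ follows at once from $C^2\|\nabla F\|_\B^2 \leq R^2\cos^2\theta$. This proves (3.5).

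For the pointwise bound (3.6) I would invoke the second half of Proposition 3.3(1): whenever $x \in Q'$ and $l(Q') \leq y \leq l(Q)$,
$$
|\nabla F(x,y) - (\nabla F)_Q| \leq M + C\|\nabla F\|_\B = R\cos\theta + C\|\nabla F\|_\B.
$$
The triangle inequality combined with $|(\nabla F)_Q| \leq R$ then yields $|\nabla F(x,y)| \leq (1+\cos\theta)R + C\|\nabla F\|_\B$, which is exactly (3.6). I do not expect a real obstacle: the only mild subtlety is the degenerate case $(\nabla F)_Q = 0$, where the cone condition is vacuous by convention, but the proof of Lemma 3.4 reduces the matter to maximizing $g(x,y) = (x^2 + y^2 - 2xy\cos\theta)^{1/2}$ on a rectangle whose dominant corner is $(R, R\cos\theta + k)$ precisely under the assumptions $\theta \geq \pi/3$ and $R \geq k/\cos\theta$, so the same conclusion persists.
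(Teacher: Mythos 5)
Your proof is correct and matches the paper's own argument: take $a = (\nabla F)_Q$, $b = (\nabla F)_{Q'}$, $k = C\|\nabla F\|_{\B}$, observe via Proposition 3.3(1) that $R\cos\theta \leq |a-b| \leq R\cos\theta + k$ and that the cone hypothesis of Lemma 3.4 holds (vacuously if $a=0$), and apply that lemma for (3.5); then use the second half of Proposition 3.3(1) together with the triangle inequality for (3.6). Your explicit handling of the degenerate case $(\nabla F)_Q = 0$ is a welcome addition the paper leaves implicit.
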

\begin{proof}
Take $a = (\nabla F )_Q$, $b = (\nabla F)_{Q'}$ and $k = C||\nabla F
||_{\B}$. By Proposition $3.3$, $R\cos \theta \leq |a -b|  \leq R
\cos \theta + k $. Since $R \geq k / \cos \theta$, $(3.5)$ follows
from Lemma $3.4$ . Inequality $(3.6)$ also follows from part $1$ of
Proposition $3.3$.
\end{proof}

Now we are ready to construct a Cantor-type set that will be
contained in $\D (\nabla F) $. Start with a fixed cube $Q_0 \subset
\R^d$. Fix $\pi /3 \leq \theta < \pi /2$ and $M>0$. For each $k\geq
0$ we will define a family of cubes $\mathcal G_k$ as follows: let
$\mathcal G_0 = \{ Q_0 \}$ and $\mathcal G_1 = \mathcal S_M (Q_0 )$.
If $k\geq 2$ and $\mathcal G_{k-1}$ has already been defined, we
take
$$
\mathcal G_k  = \bigcup_{Q\in \mathcal G_{k-1}}\, \mathcal S_{M,
\theta}(Q)
$$
Let
\begin{equation}
E_k = \bigcup_{Q\in \mathcal G_k }Q \, \, ,  \, \, \, \,  E_{\infty}
= \bigcap_{k = 0}^{\infty} E_k   \label{minx}
\end{equation}
Observe that from the construction and  Proposition $3.3$,
assumption $1$ of Lemma $3.2$ is satisfied with $\alpha = N^{-
\frac{M}{C||\nabla F ||_{\B}}}$.

\begin{prop} Fix $\pi / 3 \leq  \theta <
\pi / 2$ , $R>0$ and $M = R\cos \theta$.  Let $F$, $C$ be as in
Proposition $3.3$. Given  a cube $Q_0 \subset \R^d$,  construct
$E_{\infty}$ as in $(3.7)$. Suppose that
\begin{equation}
R \geq  \max \big \{ \frac{C||\nabla F ||_{\B} }{\cos \theta}\, , \,
\,  |(\nabla F )_{Q_0}|  \}
\end{equation}
Then
$$
E_{\infty} \subset \{ x\in Q_0 \, : \, \, \, \sup_{0 < y \leq l(Q_0
)}|\nabla F(x,y)| \leq 2R \}
$$
\end{prop}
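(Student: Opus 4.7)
The plan is to trace each $x \in E_\infty$ through its unique Cantor chain of cubes and combine the pointwise oscillation estimate from Proposition $3.3$ in the Carleson-like regions with the inductive magnitude control of the gradient averages $(\nabla F)_Q$ provided by Corollary $3.6$. The crucial intermediate claim is the uniform bound
$$
|(\nabla F)_{Q_k}| \leq R \qquad \text{for every } k \geq 0,
$$
along the nested chain $Q_0 \supset Q_1 \supset \cdots$ with $Q_k \in \mathcal G_k$ and $x \in Q_k$ for all $k$.

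Assuming the claim, the conclusion follows quickly. Given $0 < y \leq l(Q_0)$, I pick the unique $k \geq 0$ with $l(Q_{k+1}) \leq y \leq l(Q_k)$. Since $x \in Q_{k+1} \in \mathcal S_M(Q_k)$ and $y$ lies in the vertical range $[l(Q_{k+1}), l(Q_k)]$, Proposition $3.3$ gives
$$
|\nabla F(x,y) - (\nabla F)_{Q_k}| \leq M + C\|\nabla F\|_{\B}.
$$
Hypothesis $(3.8)$ forces $C\|\nabla F\|_{\B} \leq R \cos\theta$, while $M = R\cos\theta$ by definition, so $M + C\|\nabla F\|_{\B} \leq 2R\cos\theta \leq R$, using $\theta \geq \pi/3$ at the last step. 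Combined with the intermediate claim, this yields $|\nabla F(x,y)| \leq 2R$.

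I would prove the intermediate claim by induction on $k$. The base case $k = 0$ is the second part of hypothesis $(3.8)$. For the inductive step, assume $|(\nabla F)_{Q_k}| \leq R$. Since $Q_{k+1} \in \mathcal S_{M,\theta}(Q_k)$, I apply Corollary $3.6$ with $a = (\nabla F)_{Q_k}$ and $b = (\nabla F)_{Q_{k+1}}$ to obtain
$$
|(\nabla F)_{Q_{k+1}}| \leq \sqrt{R^2\sin^2\theta + C^2\|\nabla F\|_{\B}^2} \leq \sqrt{R^2\sin^2\theta + R^2\cos^2\theta} = R,
$$
again by $(3.8)$.

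The main obstacle is precisely this inductive magnitude bound: without the cone restriction built into $\mathcal S_{M,\theta}$, the stopping inequality alone would only yield $|(\nabla F)_{Q_{k+1}}| \leq |(\nabla F)_{Q_k}| + M + C\|\nabla F\|_{\B}$, which would essentially double the bound at each generation and destroy any hope of a uniform estimate. The cone condition is what forces the gradient averages to stay inside the ball of radius $R$ throughout the tree, and Corollary $3.6$ packages this geometric fact into the clean quadratic inequality used above. Once the magnitude bound holds, the final estimate $|\nabla F(x,y)| \leq 2R$ is just one application of the Bloch oscillation estimate in Proposition $3.3$, together with the calibration $M = R\cos\theta$ and $\theta \geq \pi/3$.
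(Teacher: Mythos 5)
Your proof is correct and follows essentially the same route as the paper: establish by induction (via Lemma~3.4 / Corollary~3.6, inequality~(3.5)) that $|(\nabla F)_{Q_k}| \leq R$ along the chain, then combine with the oscillation estimate in Proposition~3.3 and the calibration $M = R\cos\theta$, $\cos\theta \leq 1/2$, to get $|\nabla F(x,y)| \leq (1+\cos\theta)R + C\|\nabla F\|_{\mathcal B} \leq 2R$; this is exactly the paper's two-step argument, only written out more explicitly. One small caveat shared with the paper: your inductive step assumes $Q_{k+1} \in \mathcal S_{M,\theta}(Q_k)$ for every $k\geq 0$, whereas the construction as literally stated in $(3.7)$ sets $\mathcal G_1 = \mathcal S_M(Q_0)$ without the cone restriction; since the paper's own proof also invokes $(3.5)$ at level one, this is clearly a typo in the construction and your reading is the intended one, but it is worth noting that the bound $|(\nabla F)_{Q_1}|\leq R$ does require $\mathcal G_1=\mathcal S_{M,\theta}(Q_0)$.
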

\begin{proof}

Observe first that from $(3.5)$ in Corollary $3.6$, we deduce that
if $k \geq 0$ and  $Q\in \mathcal G_k$ then $|(\nabla F )_Q | \leq
R$. Therefore, from  Corollary $3.6$ and the assumption on $\theta$
we get
$$
| \nabla F (x,y) | \leq ( 1 + \cos \theta )R + C||\nabla F||_{\B}
\leq 2R
$$
if $x\in E_{\infty}$ and $0 < y \leq l(Q_0 )$.
\end{proof}
\begin{cor}
Let $F$ be a harmonic function in $\R^{d+1}_+$ such that $\nabla F
\in \B ( \R^{d+1}_+ )$. Fix $ \pi /3 \leq \theta < \pi /2$, $R>0$,
$M = R\cos \theta$ and let $C>0$ be the constant appearing in
Proposition 3.3. Suppose that there is a constant $ 0 < \beta < 1$
such that for any cube $Q_0 \subset \R^d$,  any subcube $Q\subset
Q_0$ and any $R$ satisfying
 $$
R \geq  \max \big \{ \frac{2C||\nabla F ||_{\B}}{\cos \theta} \, ,
\, \, |(\nabla F )_{Q_0}| \,  , \, \,
    \frac{C||\nabla F ||_{\B}\log (\frac{1}{\beta})}{d \log N} \big \}
$$
we have
\begin{equation}
\sum_{Q'\in \mathcal S_{M, \theta }(Q)} \, (l(Q'))^d \geq \beta
(l(Q))^d \label{minx}
\end{equation}
Then
$$
Dim \{x\in Q_0 \, : \, \, \, \sup_{0 < y \leq l(Q_0 )} | \nabla F
(x,y)| \leq 2R \, \} \geq d - \frac{C ||\nabla F ||_{\B}\log (
\frac{1}{\beta})}{R ( \cos \theta )  \log N}
$$
In particular,
$$
Dim ( \D ( \nabla F ) \cap Q_0 ) = d
$$
for any cube $Q_0 \subset \R^d$.
\end{cor}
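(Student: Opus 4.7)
\emph{Proof plan.} The corollary is an assembly of two ingredients already prepared in this section: the Cantor-type construction defined just before Proposition~$3.7$ (with its cone refinement built into $\mathcal S_{M,\theta}(Q)$) and the Hungerford-type Lemma~$3.2$.

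First, I would fix $Q_0\subset\R^d$ and build $\mathcal G_k$, $E_k$ and $E_\infty$ as in $(3.7)$ using the prescribed $\theta$ and $M=R\cos\theta$. The first two lower bounds on $R$ in the hypothesis supply exactly condition $(3.8)$, so Proposition~$3.7$ gives
$$
E_\infty\subset\{x\in Q_0 : \sup_{0<y\le l(Q_0)}|\nabla F(x,y)|\le 2R\}.
$$
Consequently the task reduces to bounding $Dim(E_\infty)$ from below.

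Next, I would apply Lemma~$3.2$ to $\{E_k\}$. The nesting required in condition $(1)$ is automatic from the $N$-adic tower structure, and part $(3)$ of Proposition~$3.3$ provides the sidelength contraction $l(Q')/l(Q)\le\alpha$ with $\alpha=N^{-M/(C||\nabla F||_{\B})}$. Condition $(2)$ is exactly the standing assumption $(3.9)$ with the prescribed $\beta$. The auxiliary requirement $\alpha<\beta^{1/d}<1$ of Lemma~$3.2$ rearranges, after taking logs, to $M>C||\nabla F||_{\B}\log(1/\beta)/(d\log N)$; the third lower bound on $R$ (together with $M=R\cos\theta$ and $\cos\theta\le 1$) secures this, and a harmless perturbation $\beta\leadsto\beta-\varepsilon$ produces the strict inequality if needed. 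A direct computation using $\log(1/\alpha)=M\log N/(C||\nabla F||_{\B})$ then gives
$$
\frac{\log(\beta/\alpha^d)}{\log(1/\alpha)} = d-\frac{C||\nabla F||_{\B}\log(1/\beta)}{R\cos\theta\,\log N},
$$
which is exactly the lower bound claimed for $Dim(E_\infty)$, and hence for the superlevel set in the statement.

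For the final assertion I would observe that $E_\infty\subset\D(\nabla F)\cap Q_0$, so letting $R\to\infty$ with $\beta$ fixed drives the above dimension estimate up to $d$; the reverse inequality $Dim(\D(\nabla F)\cap Q_0)\le d$ is trivial. The proof of the corollary itself presents no real obstacle — all the analytic weight has already been absorbed by Propositions~$3.3$ and~$3.7$, and the remaining work is bookkeeping to match the three threshold conditions on $R$ to their respective uses (the bound $2C||\nabla F||_{\B}/\cos\theta$ feeds into Corollary~$3.6$ and Proposition~$3.7$, the bound $|(\nabla F)_{Q_0}|$ starts the recursion, and the $\log(1/\beta)$ bound secures Lemma~$3.2$). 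The genuine difficulty of the overall programme — verifying the uniform covering hypothesis $(3.9)$ in concrete situations — is deferred to Section~$4$ via the weak quasi-regularity condition.
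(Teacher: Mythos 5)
Your proposal is correct and takes essentially the same route as the paper: the paper's own proof of Corollary~$3.8$ is literally one sentence, pointing to Proposition~$3.7$ and Lemma~$3.2$ with $\alpha = N^{-M/(C\|\nabla F\|_{\B})}$, and your write-up is a faithful unpacking of that. One small caveat: the step ``the third lower bound on $R$ (together with $M=R\cos\theta$ and $\cos\theta\le 1$) secures this'' is backwards---since $M=R\cos\theta\le R$, the threshold $R\ge C\|\nabla F\|_{\B}\log(1/\beta)/(d\log N)$ does not by itself give $M> C\|\nabla F\|_{\B}\log(1/\beta)/(d\log N)$; one would need the slightly stronger $R\ge C\|\nabla F\|_{\B}\log(1/\beta)/(d\cos\theta\log N)$. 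This looks like a small glitch in the paper's own hypothesis list and is harmless here, because the final assertion is obtained by sending $R\to\infty$.
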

\begin{proof}
If $\beta$ is as in $(3.9)$ and $\alpha = N^{-\frac{M}{C||\nabla
F||_{\B}}}$ the result follows from Proposition $3.7$ and Lemma
$3.2$.

\end{proof}

\section{A weak quasiregularity condition of harmonic gradients. }

In this section the notion of weak quasiregularity is discussed and
it is applied to get the relevant estimate, which is stated in
Corollary $4.5$, in the stopping time process described in Section
$3$. This estimate is needed in the proof of Theorem $1$.

We recall some well known facts from elementary linear algebra.
Suppose that $A = (a_{ij})$ is a $(d+1)\times (d+1)$ symmetric
matrix. Then
$$
\min |\lambda_i | = \min_{|e| =1}|Ae| \leq \max_{|e| =1}|Ae| = \max
|\lambda_i |
$$
where $\{ \lambda_i \}$ are the eigenvalues of $A$. Therefore, $A$
distorts exactly in the same way along all directions if and only if
$A$ is a conformal matrix, that is, all its eigenvalues have the
same absolute value. On the other hand, observe that
$$
||A ||^2 = \sum_{i,j =1}^{d+1} a_{ij}^2 = \sum_{i=1}^{d+1} |Ae_i |^2
$$
where $e_1$, ..., $e_{d+1}$ is the canonical basis of $\R^{d+1}$.
Furthermore,
$$
\frac{1}{\sqrt{d+1}}||A || \leq \max_{|e| =1}|Ae| \leq  ||A||
$$

If $F: \R^{d+1}_{+} \to \R$,  let $HF$ be the Hessian matrix of $F$,
that is, the $(d+1) \times (d+1)$ - matrix of all second derivatives
of $F$. If $\xi \in  \R^{d+1}$ we interpret $(HF) \xi$ as the
function obtained by matrix multiplication. Furthermore, according
to the previous comments we denote
\begin{equation}
||HF(x,y)||^2 = \sum_{i,j=1}^{d+1}\big | \frac{\partial^2
F}{\partial x_i \partial x_j }(x,y) \big |^2
\end{equation}
Here we denote $x = (x_1 , \ldots, x_d) \in \R^d$ and $x_{d+1} = y$.
 Fix $0 < \delta < 1$. For each cube $Q\subset \R^d$ we define the
$\delta$-\textit{hyperbolic box} associated to $Q$ as
$$
C_{\delta}(Q) = \{ (x, y) \in \R^{d+1}_+ \, : \, \, x\in Q \, , \,
\, \delta l(Q) \leq y \leq l(Q) \}
$$

Let $0 < \delta < 1 \leq \gamma$. We say that $\nabla F$ satisfies a
$\delta$- \textbf{weak Quasi-Regularity condition} with constant
$\gamma$ if  for any cube $Q\subset \R^d$ the following inequality
holds
\begin{equation}
 \int_{C_{\delta}(Q)} \max_{|e|=1} |(HF)e (x,y)|^2 dx dy \leq \gamma^2
\min_{|e|=1} \int_{C_{\delta}(Q)} |(HF)e (x,y) |^2 dx dy
\label{minx}
\end{equation}
where the $\max$ and $\min$ are taken over all unitary vectors $e\in
\R^{d+1}$. If we are not interested in the particular value of the
constant $\gamma$ we will just say that $\nabla F$ satisfies a
$\delta$- \textbf{weak QR condition}. For technical reasons we will
only use in this section values of $\delta $ of the form  $ 1/N$ for
an integer $N \geq 2$.

\

 Now, fix an integer $N\geq 2$, a sufficiently large number $M>0$ and a cube
 $Q\subset \R^d$.  Let $\mathcal S_M
(Q)$  be the family of $N$-adic subcubes of $Q$ introduced  in
section $3$. For $j\geq 1$, let $\mathcal S_M^j (Q)$ be the family
of all cubes $Q' \in \mathcal S_M (Q)$ of generation at most $j$.
Define also
\begin{align*}
T(Q) = & \{ (x,l(Q))\, : \, \, x\in Q \} \,  , \\
T_j (Q) = & \bigcup_{Q'\in \mathcal S_M^j (Q)} T(Q') \, , \\
S_j (Q) = & \bigcup_{Q'\in \mathcal S_M^j (Q)} Q' \, , \\
B_j (Q) = & \{ (x, N^{-j}l(Q)) \, : \, \, x\in Q \setminus S_j (Q) \} \, , \\
\Omega_j (Q) = & \, ( \widehat{Q} \setminus \bigcup_{Q'\in \mathcal
S_M^j (Q)} \widehat{Q'} ) \cap \{ (x,y)\, : \, \, x\in Q\, , \, \,
N^{-j}l(Q) < y < l(Q) \}
\end{align*}

Note that $\Omega_j (Q)$ is a sort of truncated domain associated to
the stopping time originating $\mathcal S_M (Q)$. In order to show
that the cubes in $\mathcal S_{M, \theta}(Q)$ take  a fixed amount
of the $d$-dimensional measure of $Q$ (as required in assumption
$(3.9)$) we will need to assume that $\nabla F $ satisfies a QR
condition. The technique basically consists of using Green's formula
applied to the functions $y$ and $|\nabla F - (\nabla F )_Q |^2 $ in
the domains $\Omega_j$. We will prove  a sequence of technical
lemmas. In the rest of the section, $m_d (E)$ stands for the
$d$-dimensional Lebesgue measure of $E\subset \R^d$.

\begin{lem}
Let $F$ be harmonic in $\R^{d+1}_{+}$ such that $\nabla F \in
\mathcal{B}(\R^{d+1}_+)$ and $m_d ( \D (\nabla F )) = 0$. Let $M>
0$, $Q$ and $\Omega_j (Q)$ be as above. Then there is a  constant $C
= C(d,N)>0$ such that if $M
> C||\nabla F ||_{\mathcal{B}}$, then
\begin{equation}
\int_{\Omega_j (Q)} y || HF (x,y)||^2 dxdy \geq \frac{1}{8} M^2 \,
(l(Q))^d \label{minx}
\end{equation}
for sufficiently large $j$.
\end{lem}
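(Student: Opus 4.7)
The plan is to apply Green's identity to $u(x,y) = y$ and $v(x,y) = |\nabla F(x,y) - (\nabla F)_Q|^2$ on the sawtooth domain $\Omega_j(Q)$. Since each component of $\nabla F$ is harmonic, a direct computation gives $\Delta u = 0$ and $\Delta v = 2\|HF\|^2$, so
$$
2 \int_{\Omega_j(Q)} y\,\|HF(x,y)\|^2\,dx\,dy = \int_{\partial\Omega_j(Q)}\bigl(y\,\partial_n v - v\,\partial_n y\bigr)\,d\sigma,
$$
and the task reduces to extracting from the right-hand side a term of order $M^2\,l(Q)^d$.

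I would decompose $\partial\Omega_j(Q)$ into: the top face $T(Q)$ (outward normal $+e_{d+1}$), the tops $T(Q')$ of the stopped cubes $Q'\in \mathcal{S}_M^j(Q)$ (outward normal $-e_{d+1}$), the bottom truncation $B_j(Q)$ (outward normal $-e_{d+1}$), and the various lateral sides of $\widehat{Q}$ and of the $\widehat{Q'}$. The main contribution is the $-v\,\partial_n y$ term on $T_j(Q)=\bigcup T(Q')$: since $\partial_n y=-1$ there, the stopping inequality and Jensen's inequality give
$$
\int_{T(Q')} v\,dx \geq |Q'|\,\bigl|(\nabla F)_{Q'} - (\nabla F)_Q\bigr|^2 > M^2\,|Q'|.
$$
Summing and invoking Proposition 3.3(2) (so that the stopped cubes cover $Q$ up to a set of measure zero as $j\to\infty$), the total from $T_j(Q)$ is at least $(1-\varepsilon_j)\,M^2\,l(Q)^d$ with $\varepsilon_j\to 0$.

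The remaining boundary terms should be absorbed as errors. The Bloch bound $y\,\|HF(x,y)\|\leq\|\nabla F\|_{\B}$ together with Proposition 3.3(1) yields $|\nabla F-(\nabla F)_Q|\leq M + C\|\nabla F\|_{\B}$ on the relevant portions of $\partial\Omega_j(Q)$, hence $|v|\leq (M+C\|\nabla F\|_{\B})^2$ and $y\,|\partial_n v|\leq 2\|\nabla F\|_{\B}(M+C\|\nabla F\|_{\B})$. Piece by piece: the contribution from $T(Q)$ is $O(\|\nabla F\|_{\B}^{2}\,l(Q)^d)$ by the Bloch oscillation on $Q\times\{l(Q)\}$; the contribution from $B_j(Q)$ is bounded by a fixed multiple of $m_d(Q\setminus S_j(Q))$ and vanishes as $j\to\infty$; and the lateral contributions, where $\partial_n y = 0$ so only $y\,\partial_n v$ appears, are controlled by $O(M\,\|\nabla F\|_{\B})\,l(Q)^d$ after summing $\sum_{Q'}l(Q')^d\leq l(Q)^d$. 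Choosing the constant $C=C(d,N)$ sufficiently large, the hypothesis $M>C\|\nabla F\|_{\B}$ forces all error contributions to total at most $\tfrac{3}{4}M^2\,l(Q)^d$; dividing the whole identity by $2$ then gives the claimed $\tfrac{1}{8}M^2\,l(Q)^d$ bound for $j$ large.

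The main obstacle I anticipate is the treatment of the lateral sides of the stopped boxes $\widehat{Q'}$ at heights $y<l(Q')$, where Proposition 3.3(1) no longer directly controls $|\nabla F-(\nabla F)_Q|$. To handle them one can either use the Bloch estimate $|\nabla F(x,y)-\nabla F(x,l(Q'))|\leq\|\nabla F\|_{\B}\log(l(Q')/y)$ combined with a careful $y$-integration, or exploit the fact that a lateral face shared by two adjacent stopped cubes does not appear in $\partial\Omega_j(Q)$, so that only those faces bordering the sawtooth interior contribute; either route keeps the total lateral contribution at most a constant times $M\,\|\nabla F\|_{\B}\,l(Q)^d$, which is absorbable for $M$ large.
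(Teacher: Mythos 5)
Your proposal is correct and follows essentially the same route as the paper's proof: Green's second identity applied to $y$ and $\lvert\nabla F-(\nabla F)_Q\rvert^2$ on $\Omega_j(Q)$, the same decomposition of $\partial\Omega_j(Q)$ into $T(Q)$, $T_j(Q)$, $B_j(Q)$ and lateral faces, the same Bloch estimate $y\,\lVert HF\rVert\leq\lVert\nabla F\rVert_{\B}$ for the $y\,\partial_n v$ term, and Proposition $3.3(2)$ to make $m_d(S_j(Q))$ close to $l(Q)^d$. The one small deviation is that on $T_j(Q)$ you use Jensen's inequality to obtain $\int_{T(Q')}\lvert\nabla F-(\nabla F)_Q\rvert^2\geq M^2\,m_d(Q')$ directly, whereas the paper uses the pointwise lower bound $\lvert\nabla F-(\nabla F)_Q\rvert\geq M-C\lVert\nabla F\rVert_{\B}$ on $T_j(Q)$ from Proposition $3.3(1)$; both yield the same conclusion and your version is marginally cleaner. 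Regarding the obstacle you flag about lateral faces of $\widehat{Q'}$ at heights $y<l(Q')$: the paper handles this implicitly by asserting in $(4.5)$ that $\lvert\nabla F-(\nabla F)_Q\rvert\leq M+C\lVert\nabla F\rVert_{\B}$ on all of $\Omega_j$ (hence on $\overline{\Omega_j}\supset\partial\Omega_j$ by continuity), which follows by applying Proposition $2.1$ along the $N$-adic chain appropriate to the height $y$; your route (b), noting that such a face abuts the open sawtooth region, is essentially this observation, and your route (a) with a logarithmic Bloch estimate and $y$-integration also closes the gap. Either way the point is genuine but resolvable and the argument is sound.
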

\begin{proof}
Note that $\Omega_j = \Omega_j (Q)$ is bounded by a finite number of
smooth hypersurfaces. We apply Green's formula to $y$ and $|(\nabla
F - (\nabla F )_Q |^2$ in $\Omega_j$. By simple computation,
\begin{align*}
\nabla (|(\nabla F - (\nabla F )_Q |^2 ) = & \, 2 (HF)(\nabla F -
(\nabla F)_Q ) \, , \\
\triangle (|(\nabla F - (\nabla F )_Q |^2 ) = & \, 2 ||HF ||^2
\end{align*}
So , from Green's formula, the integral in $(4.3)$ is equal to
\begin{equation}
\int_{\partial \Omega_j }y (HF)(\nabla F - (\nabla F )_Q ) \cdot n -
\frac{1}{2}\int_{\partial \Omega_j }|\nabla F - (\nabla F )_Q |^2
\nabla (y)\cdot n \label{minx}
\end{equation}
where $n$ denotes the outer normal unit vector. Observe first that
by construction of $\Omega_j$ and Proposition $3.3$, we have
\begin{align}
|\nabla F - (\nabla F )_Q | & \leq M + C ||\nabla F ||_{\B}  \, \,
\, \, \, \text{on}  \, \, \, \, \, & \Omega_j  \\
|\nabla F - (\nabla F )_Q | & \geq M - C ||\nabla F ||_{\B}  \, \,
\, \, \,  \text{ on}  \, \, \, \, \,  & \bigcup_{Q'\in \mathcal
S_M^j (Q)}T(Q')
\end{align}
with $C = C(d, N)$.  From the Bloch condition, $(4.5)$ and the fact
that the surface measure of $\partial \Omega_j$ is smaller than a
fixed multiple of $(l(Q))^d$, it follows
\begin{equation}
\big | \int_{\partial \Omega_j } y (HF)(\nabla F - (\nabla F )_Q
)\cdot n \big | \leq C ||\nabla F ||_{\B}( M + C||\nabla F ||_{\B})
\, (l(Q))^d \label{minx}
\end{equation}
where $C = C(d, N)$. On the other hand,
$$
\int_{\partial \Omega_j} | \nabla F - (\nabla F )_Q |^2 \nabla
(y)\cdot n = \int_{T(Q)}| \nabla F - (\nabla F )_Q  |^2 - \int_{T_j
(Q) \cup B_j (Q)} | \nabla F - (\nabla F )_Q |^2
$$
From the Bloch condition we get
\begin{align}
\int_{T(Q)} | \nabla F - (\nabla F )_Q |^2 & \leq ( C||\nabla F
||_{\B})^2 (l(Q))^d \\
\int_{B_j (Q)}| \nabla F - (\nabla F )_Q |^2  & \leq (M + C||\nabla
F ||_{\B})^2 ( (l(Q))^d - m_d (S_j (Q) ) \, .
\end{align}
On the other hand, from $(4.6)$ we get
\begin{equation}
\int_{T_j (Q)}| \nabla F - (\nabla F )_Q |^2 \geq ( M - C||\nabla F
||_{\B})^2 m_d ( S_j (Q)) \label{minx}
\end{equation}
Now, by part $2$ of Proposition $3.3$ we have that   $m_d ( S_j (Q))
\to (l(Q))^d$ as $j\to \infty$. Choose $j$ large enough so that $m_d
(S_j (Q)) > \frac{3}{4}(l(Q))^d$. It then follows, from
$(4.7)$-$(4.9)$ and simple computation that there is a positive
constant
 $C= C(d,N)$ such that
\begin{align*}
\big| \int_{\partial \Omega_j} |\nabla F - (\nabla F )_Q |^2 \nabla
(y)\cdot n \big | & \geq ( \frac{M^2}{2} - 3C||\nabla F ||_{\B}M
-\frac{3}{2} C^2 ||\nabla F ||_{\B}^2 ) (l(Q))^d \\
& \geq \frac{M^2}{4} (l(Q))^d
\end{align*}
where the last inequality holds as soon as $ M > 14C||\nabla F
||_{\B}$. This proves the lemma.
\end{proof}

We need now a variant of Lemma $4.1$.
\begin{lem}
Let $F$,$Q$,$M$ and $\Omega_j (Q)$ be as in Lemma $4.1$. Assume that
$\nabla F$ satisfies a weak  $1/N$-QR condition with constant
$\gamma \geq 1$ for some integer $N \geq 2$. Then there is a
constant $C=C(d,N)>0$ such that if $M
> 36\gamma^2 C||\nabla u||_{\B}$  then
$$
\min_{|e| =1} \int_{T_j (Q)} \big [ (\nabla F - (\nabla F )_Q )\cdot
e \big]^2 \geq \frac{M^2}{16\gamma^2}(l(Q))^d
$$
for sufficiently large $j$, where the minimum is taken over all
unitary vectors $e\in \R^{d+1}$.
\end{lem}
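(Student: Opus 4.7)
The approach is to apply Green's identity in $\Omega_j(Q)$ to the harmonic function $u_e^2$ paired with $y$, where for each unit vector $e \in \R^{d+1}$ we set $u_e := (\nabla F - (\nabla F)_Q) \cdot e$. Since $F$ is harmonic and $HF$ is symmetric, $\nabla u_e = (HF)e$ and $\Delta(u_e^2) = 2|(HF)e|^2$. Noting that $\nabla y \cdot n = +1$ on $T(Q)$, $-1$ on $T_j(Q) \cup B_j(Q)$, and $0$ on the lateral faces, Green's second identity gives the identity
\[
\int_{T_j(Q)} u_e^2 = 2\int_{\Omega_j(Q)} y\,|(HF)e|^2\,dxdy - 2\int_{\partial \Omega_j(Q)} y\,u_e\,\nabla u_e \cdot n + \int_{T(Q)} u_e^2 - \int_{B_j(Q)} u_e^2.
\]

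The "noise" terms are controlled exactly as in the proof of Lemma 4.1. The Bloch condition gives $y|\nabla u_e| \leq \|\nabla F\|_{\B}$, Proposition 3.3 gives $|u_e| \leq M + C\|\nabla F\|_{\B}$ on $\partial\Omega_j(Q)$, and the total surface measure of $\partial\Omega_j(Q)$ is at most $C(d)(l(Q))^d$ since the stopping cubes are disjoint and $\sum_{Q' \in \mathcal{S}_M^j(Q)} (l(Q'))^d \leq (l(Q))^d$. Consequently $\left| \int_{\partial\Omega_j} y u_e \nabla u_e \cdot n \right| \leq C(d)(M + \|\nabla F\|_{\B}) \|\nabla F\|_{\B}(l(Q))^d$. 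By part 2 of Proposition 3.3, $m_d(S_j(Q)) \to (l(Q))^d$, so $\int_{B_j(Q)} u_e^2 \leq (M+C\|\nabla F\|_{\B})^2((l(Q))^d - m_d(S_j(Q))) = o_j(1)$. Dropping the nonnegative term $\int_{T(Q)} u_e^2 \geq 0$ then converts the identity into a lower bound.

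The crux is a lower bound for $\int_{\Omega_j(Q)} y|(HF)e|^2$ that is uniform in $e$; this is where the weak $1/N$-QR hypothesis enters decisively. I would decompose $\Omega_j(Q)$ as the essentially disjoint union $\bigsqcup_R C_{1/N}(R)$, where $R$ runs over the $N$-adic subcubes of $Q$ that are strict ancestors of some cube in $\mathcal{S}_M^j(Q)$; these $1/N$-Carleson boxes exactly tile $\widehat{Q}\setminus \bigcup_{Q'}\widehat{Q'}$ above height $N^{-j}l(Q)$, and this compatibility is precisely the reason for using $N$-adic stopping. On each box, the trivial bound $\min_{|f|=1}\int|(HF)f|^2 \leq \int|(HF)e|^2$ combined with $(4.2)$ yields $\int_{C_{1/N}(R)} \max_{|f|=1}|(HF)f|^2 \leq \gamma^2 \int_{C_{1/N}(R)}|(HF)e|^2$; coupling this with the pointwise inequality $\|HF\|^2 \leq (d+1)\max_{|f|=1}|(HF)f|^2$ and the fact that $y$ varies by a factor at most $N$ on each box, one gets $\int_{C_{1/N}(R)} y\|HF\|^2 \leq (d+1)N\gamma^2 \int_{C_{1/N}(R)} y|(HF)e|^2$. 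Summing over $R$ and invoking Lemma 4.1 gives $\int_{\Omega_j(Q)} y|(HF)e|^2 \geq \frac{M^2(l(Q))^d}{8(d+1)N\gamma^2}$, uniformly in $e$, as soon as $M > C_0(d,N)\|\nabla F\|_{\B}$.

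Assembling the three estimates, the resulting lower bound for $\int_{T_j(Q)} u_e^2$ has the shape $\frac{M^2}{c_1(d,N)\gamma^2}(l(Q))^d - c_2(d,N)(M+\|\nabla F\|_{\B})\|\nabla F\|_{\B}(l(Q))^d - o_j(1)$, and for $M > 36\gamma^2 C(d,N)\|\nabla F\|_{\B}$ with a suitable $C(d,N)$ the main term absorbs the others and produces the desired bound $\frac{M^2}{16\gamma^2}(l(Q))^d$. The main obstacle is step three: promoting the localized, box-by-box weak QR inequality to a global lower bound uniform in the direction $e$. The $N$-adic Carleson-box tiling of $\Omega_j(Q)$, together with the observation that the QR inequality is linear and hence sums term by term without committing to a distinguished direction, is what makes this possible.
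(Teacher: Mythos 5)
Your proof is correct and follows the same route as the paper: apply Green's second identity in $\Omega_j(Q)$ to $y$ and $v_e^2 = \big[(\nabla F - (\nabla F)_Q)\cdot e\big]^2$, control the surface terms by the Bloch and stopping-time bounds, and promote the $1/N$-QR condition from a single Carleson box to all of $\Omega_j(Q)$ via the $N$-adic box tiling before invoking Lemma 4.1 for the volume term. Your bookkeeping is in fact more careful than the paper's inequality (4.11), which writes $\int_{\Omega_j} y\|HF\|^2 \leq 2\gamma^2 \int_{\Omega_j} y|(HF)e|^2$ where your box-by-box argument (correctly accounting for the variation of $y$ within a box and the comparison $\|HF\|^2 \le (d+1)\max_{|f|=1}|(HF)f|^2$) yields a factor $(d+1)N\gamma^2$; consequently the clean constant $M^2/(16\gamma^2)$ in the lemma is not literally recovered, only $M^2/(C(d,N)\gamma^2)$, which is harmless since only the form of the bound is used downstream.
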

\begin{proof}
The proof mimics Lemma $4.1$. Fix a unitary vector $e \in \R^{d+1}$.
Consider the harmonic function $v = (\nabla F - (\nabla F )_Q )
\cdot e$. We apply Green's formula in $\Omega_j = \Omega_j (Q)$ to
the functions $y$ and $v^2$. A simple computation shows
\begin{align*}
\nabla v^2  = \, \, & 2((\nabla F - (\nabla F )_Q )\cdot e ) \,
(HF)e
\\
\triangle (v^2) = \, \, & 2 |(HF)e |^2
\end{align*}
Therefore, as in lemma $4.1$,
\begin{align*}
& \int_{\Omega_j }2 y |(HF)e |^2  = \\ & =   \int_{\partial \Omega_j
}2 y (\nabla F - (\nabla F )_Q \cdot e) \, (HF)e \cdot n  -
\int_{\partial \Omega_j}\big [ (\nabla F - (\nabla F )_Q )\cdot e
\big]^2 \nabla (y)\cdot n
\end{align*}
Now observe that, since $\Omega_j $ can be decomposed in a union of
disjoint hyperbolic boxes of the form $C_{1/N}(Q')$ we have, from
the $1/N$-QR condition and Lemma $4.1$:
\begin{equation}
\frac{M^2}{8} (l(Q))^d \leq \int_{\Omega_j} y ||HF||^2 \leq 2
\gamma^2 \int_{\Omega_j } y |(HF)e |^2
\end{equation}
On the other hand, as in Lemma $4.1$,
\begin{equation}
  | \int_{\partial \Omega_j } y (\nabla F - (\nabla F )_Q )\cdot
e  (HF)e \cdot n  |   \leq C ||\nabla F ||_{\B}( M + C||\nabla F
||_{\B}) \, (l(Q))^d \label{minx}
\end{equation}
As for the other surface integral, notice that $\nabla (y)\cdot n$
vanishes outside the horizontal part of $\partial \Omega_j $, which
consists of $T_j (Q) \cup T(Q) \cup B_j (Q)$. Furthermore,
\begin{align}
& \int_{T(Q)} \big [(\nabla F - (\nabla F )_Q )\cdot e \big ]^2 \leq
C^2 ||\nabla F ||_{\B}^2 (l(Q))^d \\
& \int_{B_j (Q)}\big [(\nabla F - (\nabla F )_Q )\cdot e \big ]^2
\leq (M + C||\nabla F ||_{\B})^2 m_d (B_j (Q))
\end{align}
If $j$ is large enough , $ m_d (B_j (Q))$ can be made arbitarily
small. Then, combining $(4.11)$- $(4.14)$,  we finally get
$$
\int_{T_j (Q)}\big [(\nabla F - (\nabla F )_Q )\cdot e \big ]^2 \geq
\frac{M^2}{16\gamma^2}(l(Q))^d
$$
as soon as $M> 36\gamma^2 C||\nabla F ||_{\B}$.
\end{proof}
Let $0< \theta  < \pi /2$ and $e \in \R^{d+1}$ be a unitary vector.
Define
\begin{align*}
& \mathcal S_{M, e , \theta}^{j , +}(Q) = \{ Q' \in \mathcal S_{M}^j
(Q) \, : \, \,  ((\nabla F )_{Q'} - (\nabla F )_Q )\cdot e
> \, \, \, \, \, \cos \theta | (\nabla F )_{Q'} - (\nabla F )_Q | \} \\
& \mathcal S_{M, e , \theta}^{j , -}(Q) = \{ Q' \in \mathcal S_{M}^j
(Q) \, : \, \,  ((\nabla F )_{Q'} - (\nabla F )_Q )\cdot e
< - \cos \theta | (\nabla F )_{Q'} - (\nabla F )_Q | \} \\
& \mathcal S_{M, e , \theta}^{j }(Q) = \, \mathcal S_{M, e ,
\theta}^{j , +}(Q) \cup \mathcal S_{M, e , \theta}^{j , -}(Q)
\end{align*}
If $(\nabla F )_Q \neq 0$ and we make the particular choice $e  = -
\frac{(\nabla F )_Q}{|(\nabla F )_Q |}$ we recover the family
$\mathcal S_{M, \theta}$ introduced in in section $3$:
\begin{equation}
\mathcal S_{M, \theta} (Q) = \bigcup_{j}\mathcal S_{M, e ,
\theta}^{j , +}(Q)
\end{equation}
\begin{lem}
Assume  that $\nabla F $ satisfies a weak $1/N$-QR condition with
constant $\gamma \geq 1$. There is a constant $C =C(d,N)>0$ such
that if $M
> 36 \gamma^2 C ||\nabla F ||_{\B}$ and $\cos \theta \leq 1 / 8
\gamma$  then
$$
\sum_{Q' \in \mathcal S_{M, e , \theta}^{j}(Q)} ( l(Q'))^d \geq
\frac{1}{150 \gamma^2} (l(Q))^d
$$
for any unitary vector $e\in \R^{d+1}$.
\end{lem}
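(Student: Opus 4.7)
The plan is to use Lemma 4.2 as the essential $L^2$ input and convert it into a lower bound on $S := \sum_{Q' \in \mathcal{S}_{M, e, \theta}^j(Q)} (l(Q'))^d$ by a cube-by-cube decomposition of $T_j(Q)$, in which the pointwise values of $\nabla F$ on each top face $T(Q')$ are replaced, up to a Bloch error of order $C(d,N)\|\nabla F\|_{\B}$, by the average $(\nabla F)_{Q'}$. Once such a replacement is in hand, the $L^2$ integral is bounded above by a weighted sum of the discrete projections $\bigl[((\nabla F)_{Q'}-(\nabla F)_Q)\cdot e\bigr]^2$, and the angle condition defining $\mathcal{S}_{M, e, \theta}^j(Q)$ tells us exactly which of these discrete projections can be large.

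First, I would fix a unit vector $e\in\mathbb{R}^{d+1}$, decompose $T_j(Q)=\bigsqcup_{Q'\in \mathcal{S}_M^j(Q)}T(Q')$, and apply Proposition 2.1 componentwise to $\nabla F$ to show that on each $T(Q')$ one has $|\nabla F(x,l(Q'))-(\nabla F)_{Q'}|\le C\|\nabla F\|_{\B}$ for $x\in Q'$. Combined with $(a+b)^2\le 2a^2+2b^2$, this yields
$$\int_{T_j(Q)} \bigl[(\nabla F-(\nabla F)_Q)\cdot e\bigr]^2\,dx \;\le\; 2\sum_{Q'\in \mathcal{S}_M^j(Q)}\bigl[((\nabla F)_{Q'}-(\nabla F)_Q)\cdot e\bigr]^2 (l(Q'))^d + 2C^2\|\nabla F\|_{\B}^2 (l(Q))^d.$$
Next, I would split the remaining sum according to membership in $\mathcal{S}_{M, e, \theta}^j(Q)$. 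For $Q'$ outside this family, the very definition gives $|((\nabla F)_{Q'}-(\nabla F)_Q)\cdot e|\le \cos\theta\,|(\nabla F)_{Q'}-(\nabla F)_Q|\le \cos\theta\,(M+C\|\nabla F\|_{\B})$ by Proposition 3.3, whereas on $\mathcal{S}_{M, e, \theta}^j(Q)$ only the trivial bound $M+C\|\nabla F\|_{\B}$ is available. Using disjointness to estimate $\sum_{Q'\in \mathcal{S}_M^j(Q)}(l(Q'))^d\le (l(Q))^d$, the upper bound then becomes an affine function of $S$.

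To close the argument, I would insert Lemma 4.2's lower bound $M^2(l(Q))^d/(16\gamma^2)$ on the left and plug in the hypotheses: the condition $M>36\gamma^2 C\|\nabla F\|_{\B}$ forces the pure Bloch error $2C^2\|\nabla F\|_{\B}^2$ to be negligible compared to $M^2/\gamma^2$, while $\cos\theta\le 1/(8\gamma)$ forces $2\cos^2\theta(M+C\|\nabla F\|_{\B})^2(l(Q))^d$ to lie well below $M^2(l(Q))^d/(16\gamma^2)$. Solving the resulting inequality for $S$ and using $M+C\|\nabla F\|_{\B}\le (37/36)M$ then produces $S\ge (l(Q))^d/(150\gamma^2)$ with a comfortable margin. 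I expect the main obstacle to be arithmetic bookkeeping rather than any new idea: one has to check that the two ``error'' contributions---from cubes outside $\mathcal{S}_{M, e, \theta}^j(Q)$ and from the Bloch replacement on each $T(Q')$---together consume strictly less than the $L^2$-budget supplied by Lemma 4.2, leaving a definite positive residual that translates into the claimed lower bound on $S$.
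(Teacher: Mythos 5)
Your proposal is correct and is essentially the paper's proof: both use Lemma 4.2 as the $L^2$ input, decompose $T_j(Q)$ into the top faces $T(Q')$, bound the integrand according to whether $Q'$ lies in $\mathcal S_{M,e,\theta}^j(Q)$ or not (using the angle condition in the latter case and the trivial bound $M+C\|\nabla F\|_{\B}$ from Proposition 3.3 in the former), and then solve the resulting affine inequality for $S$. The only cosmetic difference is that you first replace $\nabla F$ by the discrete average $(\nabla F)_{Q'}$ on each $T(Q')$ and pay a factor of $2$ via $(a+b)^2\le 2a^2+2b^2$, whereas the paper applies the triangle inequality pointwise to $(\nabla F-(\nabla F)_Q)\cdot e$ on $T(Q')$ and thereby avoids the extra constant; both yield constants comfortably above $1/(150\gamma^2)$.
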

\begin{proof}
If $Q' \in \mathcal S_M^j (Q) \setminus \mathcal S_{M, e , \theta}^j
(Q) $ then on $T(Q')$ we have the estimate
$$
|(\nabla F - (\nabla F )_Q )\cdot e | \leq C||\nabla F ||_{\B} +
\cos \theta ( M + C||\nabla F ||_{\B} )
$$
while, if $Q' \in \mathcal S_{M, e , \theta}^j (Q)$ then on $T(Q')$:
$$|(\nabla F - (\nabla F )_Q )\cdot e | \leq M +
C||\nabla F ||_{\B}
$$
Therefore
\begin{align*}
 \int_{T_j (Q)}\big [(\nabla F - (\nabla F )_Q )\cdot e \big ]^2 \leq
& \big [ C||\nabla F ||_{\B} + \cos \theta ( M + C ||\nabla F||_{\B}
) \big ]^2 (l(Q))^d + \\
+ &( M + C ||\nabla F ||_{\B} )^2 \sum_{Q' \in \mathcal S_{M, e ,
\theta}^j (Q)} (l(Q'))^d
\end{align*}
From the last inequality and Lemma $4.2$ we obtain
$$
\sum_{Q' \in \mathcal S_{M, e , \theta}^j (Q)} ( l(Q'))^d \geq
\frac{\frac{1}{16\gamma^2} - [ \cos \theta + (1 + \cos \theta )
\frac{C||\nabla F ||_{\B} }{M}]^2} {(1 + \frac{C ||\nabla F
||_{\B}}{M})^2} ( l(Q))^d \geq \frac{1}{150\gamma^2} ( l(Q))^d
$$
where the last inequality holds provided $M > 32\gamma C||\nabla F
||_{\B}$ and $\cos \theta \leq \frac{1}{8\gamma}$.
\end{proof}

\begin{lem}
Let $N$, $F$, $Q$, $M$ and $\Omega_j $ as in Lemma $4.1$. Suppose
that $\nabla F$ satisfies a weak $1/N$-QR condition with constant
$\gamma \geq 1$. Let $ 0 < \theta < \pi /2 $ such that $ \cos \theta
\leq 1 / 6400\gamma^3$. There is a constant $C=C(d,N)>0$ such that
for any unitary vector $e \in \R^{d+1}$ and sufficiently large $j$
we have
$$
\sum_{Q' \in \mathcal S_{M,e , \theta }^{j, +} (Q)} (l(Q'))^d \geq
\frac{(l(Q))^d}{16000\gamma^3}
$$
provided $M > 10^5 \gamma^3 C ||\nabla F ||_{\B}$.
\end{lem}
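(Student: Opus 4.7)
Plan: I would follow the template of Lemma 4.3 but replace the symmetric test function $v^2$ used in Lemma 4.2 with its positive part $(v^+)^2$, in order to isolate the positive cubes $\mathcal S_{M,e,\theta}^{j,+}(Q)$. Set $v = (\nabla F - (\nabla F)_Q)\cdot e$, a harmonic function in $\R^{d+1}_+$. The function $(v^+)^2$ is $C^1$ with distributional Laplacian $\Delta((v^+)^2) = 2\chi_{\{v>0\}}|\nabla v|^2$, so Green's identity applied to $y$ and $(v^+)^2$ on $\Omega_j$ (after a standard smoothing) gives, by the same template as Lemmas 4.1 and 4.2, the identity
$$
2\int_{\Omega_j\cap\{v>0\}} y|\nabla v|^2 = \int_{T_j(Q)}(v^+)^2 - \int_{T(Q)}(v^+)^2 + (\text{mixed boundary and Bloch terms of order } \|\nabla F\|_\B M (l(Q))^d).
$$

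The heart of the argument is to show that the one-sided energy $\int_{\Omega_j\cap\{v>0\}} y|\nabla v|^2$ is at least a $(1/C\gamma^3)$-fraction of $M^2(l(Q))^d$. Lemma 4.1 combined with the weak $1/N$-QR condition already gives $\int_{\Omega_j} y|\nabla v|^2 \geq M^2(l(Q))^d/(16\gamma^2)$, and the a.e.\ identity $|\nabla v|^2 = \chi_{\{v>0\}}|\nabla v|^2 + \chi_{\{v<0\}}|\nabla v|^2$ shows that one of the two one-sided energies carries at least half the total. Since the conclusion of Lemma 4.4 must hold for \emph{every} unit vector $e$, we cannot simply pick the larger side; to ensure the positive side is also large, I would invoke the zero-mean property $\int_{T(Q)} v = 0$, which follows from the definition of $(\nabla F)_Q$. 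Applying Green's identity to the pair of harmonic functions $v$ and $y$ on $\Omega_j$ propagates this to $\big|\int_{T_j(Q)} v\big| \leq C\|\nabla F\|_\B (l(Q))^d$, and combining with the uniform sign information on the positive, negative, and intermediate cubes forces the positive and negative regions to be comparable in measure, at the cost of one additional factor of $\gamma$ --- which explains the $\gamma^3$ in the conclusion, one worse than the $\gamma^2$ of Lemma 4.3.

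Having established this one-sided lower bound, I would conclude by repeating the decomposition of Lemma 4.3. Splitting $T_j(Q)$ according to whether the enclosing cube lies in $\mathcal S_{M,e,\theta}^{j,+}(Q)$ (where $v^+ \leq M + C\|\nabla F\|_\B$), in $\mathcal S_{M,e,\theta}^{j,-}(Q)$ (where $v^+ = 0$), or in $\mathcal S_M^j(Q)\setminus \mathcal S_{M,e,\theta}^j(Q)$ (where $v^+ \leq \cos\theta(M + C\|\nabla F\|_\B)$), one obtains the upper bound
$$
\int_{T_j(Q)}(v^+)^2 \leq (M + C\|\nabla F\|_\B)^2 \!\!\sum_{Q' \in \mathcal S_{M,e,\theta}^{j,+}(Q)}\!\!(l(Q'))^d + \cos^2\theta\,(M + C\|\nabla F\|_\B)^2 (l(Q))^d.
$$
Combining this with the Green-identity lower bound on $\int_{T_j(Q)}(v^+)^2$ from the previous step, and using the hypotheses $\cos\theta \leq 1/(6400\gamma^3)$ and $M > 10^5\gamma^3 C\|\nabla F\|_\B$ so that the Bloch errors and the $\cos^2\theta$ term are absorbable, yields the claimed lower bound $\sum_{Q' \in \mathcal S_{M,e,\theta}^{j,+}(Q)}(l(Q'))^d \geq (l(Q))^d/(16000\gamma^3)$.

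Main obstacle. The delicate step is the balancing of the positive and negative one-sided energies. The weak QR condition is invariant under $e \leftrightarrow -e$ and so on its own cannot select one side of the nodal surface $\{v = 0\}$; the needed asymmetry has to come from the zero-mean identity via Green's formula on $\Omega_j$, and a careful tracking of how the Bloch error $C\|\nabla F\|_\B$ compares with $\cos\theta\cdot M$ is precisely what determines the large numerical constants $1/(6400\gamma^3)$, $10^5\gamma^3$, and $1/(16000\gamma^3)$ appearing in the statement.
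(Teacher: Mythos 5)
The core insight you identify---that the zero-mean property $\int_{T(Q)} v = 0$ propagates via Green's identity for $y$ and $v$ to $\bigl|\int_{T_j(Q)} v\bigr| \lesssim \|\nabla F\|_{\B}\,(l(Q))^d$, and that this is what breaks the $e \leftrightarrow -e$ symmetry that the weak QR condition cannot see---is exactly the key mechanism in the paper's proof. But your proposed organization around $(v^+)^2$ and one-sided energies has a genuine gap at precisely the step you flag as delicate. You set yourself the task of showing $\int_{\Omega_j \cap \{v>0\}} y|\nabla v|^2 \gtrsim M^2(l(Q))^d/\gamma^3$, yet the mechanism you offer---mass balance forcing the ``positive and negative regions to be comparable in measure''---does not yield it: the mass-balance identity controls the \emph{boundary} integral $\int_{T_j(Q)} v$ and, at best, the $d$-measures of the families $\mathcal A^\pm$, and says nothing about where the \emph{interior} energy $y|\nabla v|^2$ sits relative to the nodal hypersurface $\{v=0\}$. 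It is entirely consistent with $\int_{T_j(Q)} v \approx 0$ for $|\nabla v|^2$ to be concentrated in $\{v<0\}$. And if the ``comparable in measure'' assertion could be made rigorous, it would already be the conclusion of the lemma, so the whole $(v^+)^2$ scaffolding becomes redundant (a circularity you should have noticed).

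A second structural ingredient is also missing: the paper's two-angle device. The paper fixes $\theta_1 < \theta_2 := \theta$ with $\cos\theta_1 = 1/(8\gamma)$, applies Lemma 4.3 at $\theta_1$ to obtain $m_d(A_1^+) + m_d(A_1^-) \geq (l(Q))^d/(150\gamma^2)$, and runs a dichotomy: if $m_d(A_1^+)$ carries at least half, one is done since $\mathcal A_1^+ \subset \mathcal A_2^+$; otherwise $m_d(A_1^-) \geq (l(Q))^d/(300\gamma^2)$, and since on $A_1^-$ one has $|v| \geq M\cos\theta_1 - C\|\nabla F\|_{\B}$ with the \emph{large} cosine $1/(8\gamma)$, this gives $\bigl|\int_{A_1^-} v\bigr| \gtrsim M(l(Q))^d/\gamma^3$, which (via Green on the plain harmonic $v$, not $v^2$ or $(v^+)^2$, and smallness of $\int_{H_2} v$ at the \emph{small} cosine $\cos\theta_2 \leq 1/(6400\gamma^3)$) forces $\int_{A_2^+} v$ to be large, hence $m_d(A_2^+) \gtrsim (l(Q))^d/\gamma^3$. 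With a single angle $\theta = \theta_2$ the numbers do not close: the pointwise lower bound over the negative cubes would scale like $M\cos\theta_2 \approx M/\gamma^3$, too weak to dominate the intermediate term $\int_{H_2} v$ of the same size. Your plan, which uses only $\theta$, will not produce the needed imbalance.
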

\begin{proof}
Fix $M$ and $e$. Choose $0< \theta_1 < \theta_2 = \theta $ such that
$\cos \theta_1 = \frac{1}{8\gamma}$. For $i=1,2,$ let $\mathcal
A_i^+ = \mathcal S_{M, e , \theta_{i}}^{j, +} (Q)$, $\mathcal A_i^-
= \mathcal S_{M, e , \theta_{i}}^{j,-} (Q)$ and $\mathcal A_i  =
\mathcal A_i^+ \cup \mathcal A_i^-$. Furthermore, denote
\begin{align*}
& A_i^+ = \bigcup_{Q' \in \mathcal A_i^+}T(Q')\\
& A_i^- = \bigcup_{Q' \in \mathcal A_i^-}T(Q')
\end{align*}
and $A_i = A_i^+ \cup A_i^-$, $H_i = T_j (Q) \setminus A_i$.  From
Lemma $4.3$, we have $m_d (A_1 ) = m_d (A_1^+ ) + m_d (A_1^- ) \geq
\frac{1}{150 \gamma^2}(l(Q))^d$. If $m_d (A_1^+ ) \geq
\frac{1}{300\gamma^2}(l(Q))^d$ we are done  so assume that $m_d
(A_1^- ) \geq \frac{1}{300\gamma^2}(l(Q))^d$. Since $|( (\nabla
F)_{Q'} - (\nabla F )_Q )\cdot e | > M \cos {\theta}_1$ for any $Q'
\in A_1^-$, we deduce that
\begin{equation}
\int_{A_1^- } |\nabla F - (\nabla F )_Q )\cdot e | \geq (\cos
\theta_1 M - C||\nabla F||_{\B} ) m_d (A_1^- ) \geq
\frac{M(l(Q))^d}{3200\gamma^3}
\end{equation}
where, for the last inequality we have used $\cos \theta_1 =
\frac{1}{8\gamma}$ and $M > 32\gamma C||\nabla F ||_{\B}$.

Now we apply Green's formula in $\Omega_j$ to the functions $y$ and
$v = (\nabla F - (\nabla F )_Q )\cdot e$. Note that $v$ is harmonic,
while $\nabla v = (HF)e$. Therefore, from Green's formula:
\begin{equation}
\int_{\partial \Omega_j}( \nabla F - (\nabla F)_Q )\cdot e \nabla
(y)\cdot n = \int_{\partial \Omega_j}y (HF)e \cdot n
\end{equation}
As  in the previous lemmas, the integrand  on the left hand side of
$(4.17)$ vanishes outside $T_j (Q) \cup B_j (Q) \cup T(Q)$.
Furthermore
\begin{align}
& \big | \int_{\partial \Omega_j }y (HF)e \cdot n \big | \leq C
||\nabla F ||_{\B}(l(Q))^d \\
& \big | \int_{B_j (Q)} (\nabla F - (\nabla F)_Q )\cdot e \big |
\leq M m_d (B_j (Q) ) \\
& \big | \int_{T(Q)} (\nabla F - (\nabla F)_Q )\cdot e \big | \leq
C||\nabla F ||_{\B} (l(Q))^d
\end{align}
Now we use the decomposition $T_j (Q) = A_2^+ \cup A_2^- \cup H_2$
and observe that , since $A_1^- \subset A_2^-$, we get from $(4.16)$
\begin{equation}
\big | \int_{A_2^-} (\nabla F - (\nabla F)_Q )\cdot e \big | \geq
\frac{M}{3200\gamma^3}(l(Q))^d
\end{equation}
On the other hand
\begin{equation}
\big | \int_{H_2} (\nabla F - (\nabla F)_Q)\cdot e \big | \leq (\cos
\theta_2 ) ( M + C||\nabla F||_{\B} ) (l(Q))^d
\end{equation}
Now since $m_d (B_j (Q)) \to 0$ as $j\to \infty$ we get from
$(4.18)$-$(4.23)$ that if $j$ is large enough:
$$
\int_{A_2^+} (\nabla F - (\nabla F)_Q )\cdot e \geq  \big ( M (
\frac{1}{3200\gamma^3} - \cos \theta_2 ) - (2 + \cos \theta_2 )
C||\nabla F ||_{\B}\big ) (l(Q))^d
$$
From the last inequality and the fact that $|\nabla F - (\nabla F)_Q
| \leq M + C||\nabla F||_{\B}$ on $A_2^+$ we finally deduce
$$
m_d (A_2^+ ) \geq \frac{(l(Q))^d }{16000\gamma^3}
$$
provided $M > 10^5 \gamma^3 C||\nabla F||_{\B}$.
\end{proof}

Finally, we collect the previous estimates in the following
statement.

\begin{cor}
Let $F$ be harmonic in $\R^{d+1}_{+}$ such that $\nabla F \in
\mathcal{B}(\R^{d+1}_+)$ and $m_d ( \D (\nabla F )) = 0$. Assume
that $\nabla F $ satisfies a weak $1/N$- QR condition with constant
$\gamma \geq 1$ for some integer $N\geq 2$. Then there are constants
$0 < \theta_0 = \theta_0 (\gamma ) < \pi /2$, $0 < \beta = \beta
(\gamma ) < 1 $ and $C = C ( \gamma , N , d ) > 0 $ such that for
any $\theta $, $\theta_0 \leq \theta < \pi/2$, any cube $Q_0 \subset
\R^d$, any subcube $Q \subset Q_0$ and any $\displaystyle M \geq
C||\nabla F ||_{\B} $ we have
$$
\sum_{Q'\in \mathcal S_{M, \theta }(Q)} \, (l(Q'))^d \geq \beta
(l(Q))^d
$$
\end{cor}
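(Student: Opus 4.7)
The plan is to deduce the claim directly from Lemma $4.4$, after making a judicious choice of direction $e$ that identifies the one-sided family $\mathcal{S}^{+}_{M,e,\theta}(Q)$ with the cone-restricted family $\mathcal{S}_{M,\theta}(Q)$ of Section $3$. The constants $\theta_0$, $\beta$ and $C$ will be read off from the thresholds in Lemmas $4.1$--$4.4$.

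First I would dispatch the degenerate case. If $(\nabla F)_Q = 0$, then by definition $\mathcal{S}_{M,\theta}(Q) = \mathcal{S}_M(Q)$, and part $2$ of Proposition $3.3$ (which uses the standing hypothesis $m_d(\D(\nabla F)) = 0$) gives
$$
\sum_{Q' \in \mathcal{S}_M(Q)} (l(Q'))^d = (l(Q))^d,
$$
so any $\beta < 1$ works here.

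In the main case $(\nabla F)_Q \neq 0$, I would take the specific unit vector $e = \xi_Q = -(\nabla F)_Q / |(\nabla F)_Q| \in \R^{d+1}$. Identity $(4.15)$ then gives $\mathcal{S}^{j,+}_{M, \xi_Q, \theta}(Q) \subset \mathcal{S}_{M,\theta}(Q)$ for every $j \geq 1$. I would fix $\theta_0 = \theta_0(\gamma) \in (0, \pi/2)$ with $\cos\theta_0 \leq 1/(6400\gamma^3)$ and choose $C = C(\gamma, N, d)$ large enough that $M \geq C\|\nabla F\|_{\B}$ forces all hypotheses of the form ``$M > 10^5 \gamma^3 C'\|\nabla F\|_{\B}$'' appearing in Lemmas $4.1$--$4.4$. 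For any such $M$ and any $\theta \in [\theta_0, \pi/2)$, Lemma $4.4$ applied with $e = \xi_Q$ yields, for sufficiently large $j$,
$$
\sum_{Q' \in \mathcal{S}^{j,+}_{M, \xi_Q, \theta}(Q)} (l(Q'))^d \geq \frac{(l(Q))^d}{16000\gamma^3},
$$
and the inclusion above together with the choice $\beta = 1/(16000\gamma^3)$ closes the argument.

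The substantive work has been carried out in Lemma $4.4$; Corollary $4.5$ is essentially a packaging statement that specializes the ``for any unit vector $e$'' estimate to the cone direction $\xi_Q$ that defines $\mathcal{S}_{M,\theta}(Q)$. The only real bookkeeping is to check that a single pair $(\theta_0, C)$ depending only on $\gamma$ (and $d$, $N$) works uniformly in $Q_0$ and all $Q \subset Q_0$; since the constants in Lemmas $4.1$--$4.4$ depend only on $d$, $N$, $\gamma$, this reduces to taking a maximum of finitely many thresholds.
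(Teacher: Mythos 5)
Your argument is correct and is exactly the intended packaging of Lemma $4.4$ that the paper signals (the corollary is stated without a separate proof as a collection of the preceding estimates); you correctly handle the degenerate case $(\nabla F)_Q = 0$, invoke identity $(4.15)$ to pass to the one-sided family, read off $\theta_0$ from $\cos\theta_0 \leq 1/(6400\gamma^3)$ and $C$ from the thresholds, and note that the ``sufficiently large $j$'' clause passes to the full union since the truncated families are nested inside $\mathcal{S}_{M,\theta}(Q)$. There is a harmless sign discrepancy in the paper between the definitions in $(3.2)$ and the direction named in $(4.15)$, but since Lemma $4.4$ holds for every unit vector $e$, your argument is unaffected.
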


\section{Proof of Theorem $1$  }


\begin{proof} ( Part $1$)

Fix $\pi /3 \leq \theta  < \pi/2$ as in Corollary $4.5$,  a cube
$Q_0 \subset \R^d $ and assume that $m_d ( \D ( \nabla F ) \cap Q_0
) =0 $, otherwise the result is trivial. If $R$ is large enough and
$M = R\cos \theta$ then from Corollary $4.5$
$$
\sum_{Q' \in S_{M, \theta} (Q)} ( l(Q'))^d \geq \beta (l(Q))^d
$$
where $\beta = \beta (\gamma ) > 0$. Now if follows from Lemma $3.2$
and Corollary $3.8$ that
$$
Dim \{x\in Q_0 \, : \, \, \, \limsup _{y\to 0} | \nabla F (x,y)|
\leq 2R \} \geq d - \frac{C ||\nabla F ||_{\B}\log (
\frac{1}{\beta})}{R (\cos \theta ) \log N}
$$
and the result follows letting $R \to \infty$.

\end{proof}

Now we will adapt the  results in previous section to cover the case
of gradients in the little Bloch class (part $2$ in Theorem $1$).
First, we obtain an analogue of Plessner theorem on the boundary
values of analytic functions in the unit disc for gradients of
harmonic functions in the upper half-space which are weakly
quasiregular. If $f$ is analytic in the unit disc $\mathbb{D}$, a
classical result of Plessner ([14],Theorem 6.13) says that for
almost all points $e^{i\theta} \in
\partial \mathbb{D}$, either $f$ has a finite non-tangential limit at
$e^{i\theta}$ or the image by $f$ of any symmetric cone with vertex
$e^{i\theta}$ is dense in the complex plane $\mathbb{C}$. Therefore
the boundary behaviour of $f$ at a.e. $e^{i\theta}$ is either very
good or very bad. If $z = (x', y)\in \R^{d+1}_+ $ and $x\in \R^d$,
the notation $z \sphericalangle x$ means that $z$ tends to $x$
non-tangentially, that is, $z$ tends to $x$ and
$$
z \in \Gamma_{\alpha }(x) = \{( x',y) \: \, \, \, |x-x'| \leq y \tan
\alpha   \, \, , \, \, \, 0< y <1 \}
$$
for a fixed $0 < \alpha < \pi /2$. We refer to [16] for the main
results concerning non-tangential boundary behaviour of harmonic
functions in the upper half-space. The following proposition says
that if a harmonic gradient is weakly quasiregular, then Plessner
theorem still holds. Observe that, as in the analytic case, no
growth assumption is required.

\begin{prop}
Let $u$ be harmonic in $\R^{d+1}_+$ such that $\nabla u$ is weakly
quaisregular. Then, for any cube $Q\subset \R^d$ one of the two
possibilities holds:
\begin{enumerate}
\item
$$
m_d \big ( \{ x\in Q \, :  \lim_{z\sphericalangle x} \nabla u (z)
\text{ exists}  \}  \big ) > 0
$$
\item
For any $0 < \alpha < \pi /2$ and for almost every $x \in Q$,
$\nabla u ( \Gamma_{\alpha}(x))$ is dense in $\R^{d+1}$.  In
particular, for any $a\in \R^{d+1}$ and for a.e. $x\in Q$,
$$
\liminf_{z\sphericalangle x} | \nabla u (z) - a | = 0
$$
\end{enumerate}
\end{prop}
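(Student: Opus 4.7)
My plan is to argue by contradiction, showing that negation of (2) implies (1). Suppose there exist $\alpha \in (0,\pi/2)$ and a measurable set $E \subset Q$ with $m_d(E) > 0$ such that $\nabla u(\Gamma_\alpha(x))$ is not dense in $\R^{d+1}$ for every $x \in E$. By covering $\R^{d+1}$ with countably many balls of rational center and radius and a pigeonhole argument, I would find a ball $B(a_0,r_0) \subset \R^{d+1}$ and $E' \subset E$ with $m_d(E') > 0$ such that $\nabla u(\Gamma_\alpha(x)) \cap B(a_0,r_0) = \emptyset$ for every $x \in E'$. Setting $G := \nabla u - a_0$, this gives $|G(z)| \geq r_0$ on the sawtooth $\Omega_{E'} := \bigcup_{x \in E'} \Gamma_\alpha(x)$. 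Each component of $G$ is harmonic and its Jacobian $DG$ coincides with the Hessian $Hu$, so $G$ inherits the weak quasi-regularity of $\nabla u$.

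The central step will be to upgrade the pointwise lower bound $|G| \geq r_0$ on $\Omega_{E'}$ into a pointwise upper bound on a sub-sawtooth $\Omega_{E''}$ with $m_d(E'') > 0$. Once this is achieved, each coordinate of $G$ is non-tangentially bounded on $E''$, and the Local Fatou theorem applied componentwise (see [16, Ch.~VII]) produces $\lim_{z \sphericalangle x} \nabla u(z)$ for a.e. $x \in E''$, which is precisely possibility (1). The final assertion $\liminf_{z\sphericalangle x}|\nabla u(z)-a|=0$ for a.e. $x$ would then follow from the density clause in (2) applied to balls $B(a,1/k)$, $k \in \N$, together with a countable intersection argument.

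To produce the upper bound I would first use Lebesgue density to choose a cube $Q_0 \subset Q$ in which $E'$ has density greater than $1-\varepsilon$; this guarantees that $\Omega_{E'}$ occupies most of each box $C_{1/N}(Q')$ for dyadic $Q' \subset Q_0$ of small sidelength. Since $|G|^2$ is subharmonic with $\Delta |G|^2 = 2\|Hu\|^2$, I would then run a Green's formula argument in the spirit of Lemmas 4.1--4.2, integrating $y\,\Delta |G|^2$ against boundary data on truncated domains $\Omega_j(Q')$ and invoking the weak $1/N$-QR condition to trade $\|Hu\|^2$ for $|(Hu)e|^2$ in a prescribed direction $e$. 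A stopping-time argument modeled on Section 3 but halting whenever $|G|$ becomes too large (rather than when its mean oscillates) should then yield the upper bound for $|G|$ on a positive-measure subset of $E'$. The main obstacle will be exactly this conversion of a one-sided bound ($|G|\geq r_0$) into a two-sided bound: in Makarov's planar setting the anti-analytic function $1/(\nabla u - a_0)$ is automatically bounded by $1/r_0$ and Local Fatou applies directly, whereas here no such algebraic inversion is available and the integral weak-QR inequality must substitute for it.
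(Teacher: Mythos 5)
Your opening reduction (pigeonhole over rational balls, passing to $E'$ and $a_0,r_0$ with $|\nabla u - a_0|\ge r_0$ on a sawtooth) matches the paper's, but from there the proposal diverges and hits a genuine gap at exactly the step you flag as the ``main obstacle.'' Your plan is to upgrade the one-sided bound $|G|\ge r_0$ into a pointwise \emph{upper} bound on a positive-measure sub-sawtooth via Lebesgue density plus a stopping-time/Green's-formula argument ``modeled on Section 3.'' That machinery cannot deliver what you need: the Section 3--4 stopping time is run \emph{under the hypothesis} that $m_d(\mathcal D(\nabla F))=0$ and it produces a nested Cantor-type set of Hausdorff dimension close to $d$ (Lemma 3.2, Corollary 3.8), not a positive-measure set. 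If the set of non-tangential boundedness had positive measure you would already be done without any of this; if it has measure zero the stopping time gives only a dimension estimate, not the positive-measure upper bound your argument requires before invoking Local Fatou. You also correctly note that the Makarov inversion trick $1/(\nabla u - a_0)$ is unavailable, but you offer no working substitute.

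The paper avoids the two-sided bound entirely. Having fixed $a$ with $|\nabla u - a|$ bounded away from zero on the sawtooth, it picks a single coordinate $i$ with $|\partial u/\partial x_i - a_i|$ bounded away from $0$ there, which (by a sign consideration) makes that harmonic function non-tangentially bounded \emph{from one side}. It then invokes Carleson's theorem [5] that a harmonic function in $\R^{d+1}_+$ which is non-tangentially bounded from below on a set of positive measure has non-tangential limits a.e.\ on that set --- this is the substitute for inversion that you are missing. From this one-coordinate conclusion, the area-function characterization of non-tangential convergence ([16, Ch.~VII, Thm.~4]) gives $A^2_\alpha(\partial u/\partial x_i)(x)<\infty$ a.e.\ on $E$. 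The weak quasi-regularity hypothesis enters only at this point: decomposing truncated cones into hyperbolic boxes $C_\delta(Q')$ and using $\nabla(\partial u/\partial x_i)=(Hu)e_i$, the $\delta$-weak QR inequality transfers finiteness of the area integral from the $i$-th coordinate to $\|Hu\|^2$ and hence to every coordinate $j$. Applying the area-function theorem in the converse direction then yields existence of $\lim_{z\sphericalangle x}\nabla u(z)$ a.e.\ on $E$. To complete the proposal you would need to replace your Green's-formula/stopping-time step with this Carleson plus area-function route (or some other mechanism that genuinely converts one-sidedness to a.e.\ convergence); as written the argument does not close.
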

\begin{proof}
Fix a cube $Q\subset \R^d$. Assume that part 2 does not hold.
Standard measure theoretic arguments show that there are a set $E
\subset Q$ with $m_d (E)
> 0$, $a\in \R^{d+1}$,  $0 < {\alpha}_0 < \pi / 2$ and $0 < y_0 < 1$ such that
$$
\inf |\nabla u (z) - a | > 0
$$
Here the infimum is taken over the set $\{z \in \R^{d+1}_+ : z \in
\Gamma_{\alpha_0}(x) , x \in E \} \cap \{0 <y < y_0 \}$. For
simplicity,  $E$ may denote hereafter different subsets of $Q$ of
positive $d$-dimensional measure. Choose $i\in \{1,..., d+1 \} $
such that
$$
\inf \big |\frac{\partial u}{\partial x_i}(z) - a_i \big |
> 0
$$
where, as usual, $x_{d+1} = y$ and $a = (a_1 , ..., a_{d+1})$. L.
Carleson proved that a harmonic function in $\R^{d+1}_+$ which is
non-tangentially bounded from below on a certain set of points in
$\R^d$, has a non-tangential limit at almost every point of this set
(see [5]).
 We deduce that $\lim_{z\sphericalangle x} \frac{\partial u}{\partial x_i}(z)$
exists for almost every $x\in E$.
From well known results relating the boundary behaviour and the area
function of harmonic functions in the upper half-space ([16], Ch.
VII, Theorem 4), we deduce that
\begin{equation}
A^{2}_{\alpha}(\frac{\partial u}{\partial x_i })(x) =
\int_{\Gamma_{\alpha}(x)} y^{1-d}\big |\nabla (\frac{\partial
u}{\partial x_i}(z)) \big |^2 dz < \infty
\end{equation}
for a.e $x\in E$ and any $0 < \alpha  < \pi /2$. Here $A_{\alpha}v
(x)$ denotes the so called area function of $v$ associated to the
aperture $\alpha$. ( See [16] for details). Let $x$ and $\alpha$ be
as in $(5.1)$. Assume that $\nabla u$ satisfies a $\delta$ weak
quasiregular condition with constant $ \gamma$ for a certain $0<
\delta < 1$. Consider the truncated cones
$$
\Gamma_{\alpha , n}(x) = \Gamma_{\alpha}(x) \cap \{ (x' , y) \, : \,
\, \,  \delta^{(n+1) } \leq y \leq \delta^{n}  \}
$$
Then, if $Q'$ is the cube centered at $x$ with $l(Q') = 2\delta^{n}$
we have
$$
\Gamma_{\beta , n} (x)  \subset C_{\delta}(Q') \subset
\Gamma_{\alpha , n} (x)
$$
where $0< \beta < \alpha $ and $\tan \beta = \delta \tan \alpha$.
Therefore , since $\nabla (\frac{\partial u}{\partial x_i}) = (Hu)
e_i $ where $e_i$ is the $i$-th vector of the canonical basis in
$\R^{d+1}$, we get from condition $\delta$- \textit{QR}
$$
\int_{\Gamma_{\beta , n}(x)}y^{1-d}||Hu||^2  \leq C_1 \gamma^2
\int_{C_{\delta}(Q')} y^{1-d}\big | \nabla (\frac{\partial
u}{\partial x_i} ) \big |^2 \leq  C_1 \gamma^2 \int_{\Gamma_{\alpha
, n} (x)} y^{1-d} \big | \nabla (\frac{\partial u}{\partial x_i})
\big |^2
$$
where $C_1$ only depends on $\delta$, $\alpha$ and $d$. Therefore,
$$
A_{\beta}^{2}(\frac{\partial u}{\partial x_j}) (x) < \infty
$$
for all $j= 1, ..., d+1$. Using again the results relating the
non-tangential limits and the area function, this time in the
opposite direction, we finally get that $\displaystyle
\lim_{z\sphericalangle x} \nabla u (z) $ exists for almost every $x
\in E$. This proves the Proposition.
\end{proof}

\noindent{\textbf{Sketch of proof of Theorem $1$, part $2$:}}

\

Suppose that $F: \R^{d+1}_+ \to \R$ is harmonic and $\nabla F \in
\B_0 (\R^{d+1}_+ )$ satisfies a $1/N$- QR condition with constant
$\gamma \geq 1$. Pick a cube $Q_0 \subset \R^d$ and assume that $m_d
( \D_0 ( \nabla F ) \cap Q_0 ) = 0$ since otherwise the conclusion
is obvious. We will actually show that,
\begin{equation}
Dim \{  x\in Q_0 \, : \, \lim_{y\to 0} \nabla F (x,y) = a \} = d
\end{equation}
for each $a\in \R^{d+1}$. So in particular $Dim \D_0 ( \nabla F ) =
d$.

The proof is similar to the proof of Theorem $1$, a), the main
modifications coming from the following two key facts:
\begin{itemize}
\item In part 1 of Theorem $1$, we could assume that
$m_d ( \D (\nabla F ) \cap Q_0 ) = 0 $ so when we ran the stopping
time argument,  $\nabla F $ must escape balls with full $d$-measure
( see part $2$ in proposition $3.3$). In part $2$ of Theorem $1$,
the hypothesis is $m_d ( \D_0 (\nabla F ) \cap Q_0 ) = 0 $. By
Proposition $3.1$, $m_d ( \D (\nabla F ) \cap Q_0 ) = 0 $. Therefore
we can run the same type of stopping-time arguments using even small
balls.


\item Related to the previous point is the fact that the meaning of $||\nabla F||_{\B} $ should now be relaxed,  in
the sense that it must be understood as a variable quantity that
tends to $0$ as long as we approach the boundary of $\R^{d+1}_+$.
This implies that  $k = C||\nabla F ||_{\B}$ in sections $3$ and $4$
can  be replaced now by a sequence $k_n \to 0$,  where $n$ refers to
the successive stopping-time steps in the construction.
Consequently,  it follows from Lemma $3.5$ that the sequence of
radius $R_n $ can be chosen in such a way that $R_n \to 0$, but $R_n
/ k_n \to \infty$ which allows replacing boundedness of $\nabla F$
by convergence to a given point.
\end{itemize}
A brief sketch of the proof would run as follows:

It is enough to take $a = 0$ in $(5.2)$. Fix $\pi /3 \leq \theta <
\pi /2$ as in Corollary $4.5$.  Then we run a sequence of stopping
times corresponding to sequences $k_n$, $M_n = R_n \cos \theta$ that
can be defined as follows. Suppose that $k_j$, $R_j$ have already
been chosen,  for $j=1,..., n$. According to Proposition $3.3$, part
$3$, we define
\begin{align}
k_{n+1} = & \sup \{ y ||HF(x,y )|| \, : \, 0 < y \leq N^{- (\cos
\theta ) S_n} \, l(Q_0 ) \} \\
R_{n+1}  = &  \max \big \{ g \big ( \frac{k_{n+1}}{\cos \theta} \big
) , \sqrt{R_n^2 \sin^2 \theta + k_n^2 } \big \}
\end{align}
where $S_n = \sum_{j=1}^n R_j / k_j$. Then $k_n \to 0$ and, from
Lemma $3.5$, $R_n \to 0$ but $ R_n / k_n \to \infty$. Then the same
arguments as in section $3$ can be used to obtain that
$$
Dim \{ x\in Q_0 \, : \, \lim_{y\to 0} \nabla F (x,y ) = 0 \} = d
$$

\section{Some estimates for Poisson integrals.}
In this section we collect some estimates relating the regularity of
a function defined on $\R^d$ to the regularity of its Poisson
extension to the upper half-space $\R^{d+1}_+$.

For $0< \alpha < 1$ let $\Lambda_\alpha (\R^d)$ be the H\"{o}lder
class of bounded functions $f : \R^d \to \R$ for which
$$
\|f \|_\alpha = \sup_{x, h \in \R^d} \frac{|f(x+h) -
f(x)|}{\|h\|^\alpha} < \infty \, .
$$
The class $C^{1,  \alpha} (\R^d)$ ( resp. $C^{2, \alpha} (\R^d)$ )
consists of those differentiable functions defined in $\R^d$ whose
first partial derivatives (resp. second partial derivatives) belong
to the H\"{o}lder class $\Lambda_{\alpha} (\R^d)$.

The following two propositions will be needed later.

\begin{prop}
Let $\phi : \R^d \to \R$ such that $\phi \in C^{1,  \alpha} (\R^d)$
and  let $\Phi$ be the Poisson extension of $\phi $ to $\R^{d+1}_+$.
Then there exists a positive constant $C$, depending only on $d$ and
$\alpha$ such that
$$
|\nabla \Phi (x,y)| \leq C ( ||\phi ||_{\infty} + ||\nabla \phi
||_{\infty} + ||\nabla \phi ||_{\alpha} )
$$
for all $(x,y)\in \R^{d+1}_+$.
\end{prop}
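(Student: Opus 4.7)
The plan is to split the gradient into its tangential and normal components and treat them separately, exploiting the regularity of $\phi$ in different ways for each.

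First I would handle the tangential derivatives $\partial_{x_i}\Phi$ for $i=1,\ldots,d$. Since $\phi\in C^{1,\alpha}(\R^d)$ in particular has bounded partial derivatives, an integration by parts in the Poisson integral (using $\partial_{x_i}P_y(x-t)=-\partial_{t_i}P_y(x-t)$) yields that $\partial_{x_i}\Phi$ is itself the Poisson extension of $\partial_{x_i}\phi$. Since the Poisson semigroup is $L^\infty$-contractive, this gives immediately $|\partial_{x_i}\Phi(x,y)|\leq\|\partial_{x_i}\phi\|_\infty\leq\|\nabla\phi\|_\infty$.

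The normal derivative $\partial_y\Phi$ requires more work, and here I would split according to the size of $y$. For $y\geq 1$, the standard Poisson kernel estimate $\int|\partial_y P_y(u)|\,du\leq C/y$ gives $|\partial_y\Phi(x,y)|\leq C\|\phi\|_\infty/y\leq C\|\phi\|_\infty$. For $0<y<1$ I would pivot to using harmonicity of $\Phi$: $\partial_{yy}\Phi=-\sum_{i=1}^d\partial_{x_i x_i}\Phi$. Each function $\partial_{x_i}\Phi$ is, by the previous paragraph, the Poisson extension of $\partial_{x_i}\phi$, and the latter belongs to $\Lambda_\alpha(\R^d)$ with $\|\partial_{x_i}\phi\|_\alpha\leq\|\nabla\phi\|_\alpha$. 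A standard (and classical) Poisson-integral estimate gives that if $\psi\in\Lambda_\alpha(\R^d)$ with Poisson extension $\Psi$ then $|\nabla\Psi(x,s)|\leq C\|\psi\|_\alpha s^{\alpha-1}$. Applied coordinate by coordinate, this yields $|\partial_{x_j}\partial_{x_i}\Phi(x,s)|\leq C\|\nabla\phi\|_\alpha s^{\alpha-1}$ and hence $|\partial_{yy}\Phi(x,s)|\leq C\|\nabla\phi\|_\alpha s^{\alpha-1}$.

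Finally I would integrate this second-derivative bound from $y$ up to $1$ in the vertical variable: writing $\partial_y\Phi(x,y)=\partial_y\Phi(x,1)-\int_y^1\partial_{yy}\Phi(x,s)\,ds$, the integral is controlled by $C\|\nabla\phi\|_\alpha\int_y^1 s^{\alpha-1}\,ds\leq(C/\alpha)\|\nabla\phi\|_\alpha$, while $|\partial_y\Phi(x,1)|\leq C\|\phi\|_\infty$ from the $y\geq 1$ regime. Combining the three contributions produces the claimed bound $|\nabla\Phi(x,y)|\leq C(\|\phi\|_\infty+\|\nabla\phi\|_\infty+\|\nabla\phi\|_\alpha)$.

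The main delicate point is the auxiliary inequality for Poisson extensions of Hölder functions used in the second paragraph: it follows from the standard cancellation trick $\partial_y\Psi(x,s)=\int\partial_y P_s(x-t)[\psi(t)-\psi(x)]\,dt$ together with the scaling-homogeneous estimate $|\partial_y P_s(u)|\leq C(|u|+s)^{-(d+1)}$, after which a change of variables $u=sv$ gives the $s^{\alpha-1}$ factor. This is well known, so the proof reduces to bookkeeping once the decomposition above is in place.
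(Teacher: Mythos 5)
Your proposal follows essentially the same route as the paper's proof: tangential derivatives are Poisson extensions of $\partial_{x_i}\phi$ and hence bounded by $\|\nabla\phi\|_\infty$; for $\partial_y\Phi$ one treats $y\geq 1$ by a direct kernel bound and $y<1$ by using harmonicity to reduce $\partial_{yy}\Phi$ to tangential second derivatives, bounding those via the classical $y^{\alpha-1}$ estimate for Poisson extensions of $\Lambda_\alpha$ data, and then integrating in $y$. The only cosmetic difference is that you sketch the auxiliary Hölder-Poisson estimate directly, whereas the paper cites it from Stein.
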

\begin{proof}
First, observe that $\frac{\partial \Phi}{\partial x_i}$ is the
Poisson extension of $\frac{\partial \phi}{\partial x_i}$, $i= 1,
\ldots , d$. This shows that
$$
||\frac{\partial \Phi}{\partial x_i}||_{\infty} \leq
||\frac{\partial \phi}{\partial x_i}||_{\infty}
$$
if $i = 1,..., d$. Now we estimate $\frac{\partial \Phi}{\partial
y}$. For the case $y\geq 1$, use, as in section $2$,  the
representation
$$
\frac{\partial \Phi}{\partial y}(x,y) = \frac{1}{2}\int_{\R^d}P_y
(z,y) [\phi ( x+z) + \phi (x-z) - 2\phi (x)]dz
$$
Then
$$
|\frac{\partial \Phi}{\partial y}(x,y)| \leq A_d ||\phi
||_{\infty}\int_{\R^d}\frac{dz}{( 1 + |z|^2 )^{\frac{d+1}{2}}} \leq
B_d ||\phi ||_{\infty}
$$

Now assume $y<1$. Since for any $i=1, 2, \ldots , d$, the function
$\frac{\partial \phi}{\partial x_i}$ is in  $\Lambda_\alpha (\R^d)$,
its Poisson extension $\frac{\partial \Phi}{\partial x_i}$ satisfies
$$
\sup \{ y^{1- \alpha} \|\nabla (\frac{\partial \Phi}{\partial x_i}
(x,y)) \|: (x,y) \in \R_{+}^{d+1} \} < C(d) (||\nabla \phi
||_{\alpha} + ||\nabla \phi ||_{\infty}) \, .
$$
(See [16, p.142]). Hence
$$
\sup \{ y^{1- \alpha} \| \frac{{\partial}^2 \Phi}{{\partial}^2 y}
(x,y) \|: (x,y) \in \R_{+}^{d+1} \} < C_1 (d) (||\nabla \phi
||_{\alpha} + ||\nabla \phi ||_{\infty}) \, .
$$
Integrating we deduce that if $0 < y <1 $, then $\frac{\partial
\Phi}{\partial y}$ is uniformly bounded by $C_2 (\alpha , d)
(||\nabla \phi ||_{\alpha} + ||\nabla \phi ||_{\infty})$.   Actually
the proof gives the stronger result
$$
\sup \{ y^{1- \alpha} \| {\nabla}^2  \Phi (x,y) \|: (x,y) \in
\R_{+}^{d+1} \} < \infty .
$$

\end{proof}


Let $\phi : \R^d \to \R$ be a bounded function, let $\Phi$ be the
Poisson extension of $\phi$ to $\R^{d+1}_+$ and $H\Phi (x,y)$ the
$(d+1)\times (d+1)$- Hessian matrix of $\Phi$ at $(x,y)$. Let $||
H\Phi (x,y) ||$ be as in $(4.1)$. The next proposition provides
sufficient conditions on $\phi$ for the boundedness of $||H\Phi ||$.
Let $||H \phi||_\alpha$ be the sum of the Hölder norms of all second
order partial derivatives of the function $\phi$
\begin{prop}
Let $\phi $ be a function in the class $C^{2, \alpha} (\R^d)$. Then
there exists a positive constant $C$ depending only on $d$ and
$\alpha$, such that
$$
 ||H\Phi (x,y) || \leq C ( ||\nabla \phi
 ||_{\infty} + ||H\phi || + ||H\phi ||_{\alpha} )
$$
for any $(x,y)\in \R^{d+1}_+$.
\end{prop}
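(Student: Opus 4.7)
The plan is to estimate the entries of the Hessian matrix $H\Phi$ separately, splitting them into three groups according to which variables are being differentiated: purely tangential second derivatives $\partial^2 \Phi/\partial x_i \partial x_j$ for $1\leq i,j\leq d$; mixed derivatives $\partial^2\Phi/\partial x_i \partial y$ for $1\leq i\leq d$; and the pure vertical derivative $\partial^2\Phi/\partial y^2$. Since these account for all the entries of $H\Phi$, bounding each in terms of the quantities on the right-hand side will give the result.

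First I would handle the tangential-tangential part. Because the Poisson kernel is radial in the tangential variables and differentiation in $x_i$ commutes with the Poisson convolution, the function $\partial^2 \Phi/\partial x_i \partial x_j$ is precisely the Poisson extension of $\partial^2 \phi/\partial x_i \partial x_j$. By the maximum principle (or the fact that Poisson integration of an $L^\infty$ function is bounded by its sup norm), we get $\|\partial^2 \Phi/\partial x_i \partial x_j\|_\infty \leq \|\partial^2 \phi/\partial x_i \partial x_j\|_\infty \leq \|H\phi\|$. The pure vertical derivative is then free: harmonicity gives $\partial^2 \Phi/\partial y^2 = -\sum_{i=1}^d \partial^2 \Phi/\partial x_i^2$, so this entry is bounded by $d\|H\phi\|$.

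The mixed derivatives are the nontrivial part, and here I would invoke Proposition 6.1 applied to the $C^{1,\alpha}$ function $\psi_i = \partial \phi/\partial x_i$. Its Poisson extension is $\partial \Phi/\partial x_i$, so $\partial^2\Phi/\partial x_i \partial y$ is one component of $\nabla(\partial \Phi/\partial x_i)$. Proposition 6.1 gives a uniform bound by $C(\|\psi_i\|_\infty + \|\nabla \psi_i\|_\infty + \|\nabla \psi_i\|_\alpha)$, and each of these three terms is in turn controlled: $\|\psi_i\|_\infty \leq \|\nabla \phi\|_\infty$, while $\|\nabla \psi_i\|_\infty$ and $\|\nabla \psi_i\|_\alpha$ involve second partial derivatives of $\phi$ and are thus bounded by $\|H\phi\|$ and $\|H\phi\|_\alpha$ respectively. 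Combining all three contributions and using the definition \eqref{} of $\|H\Phi(x,y)\|^2$ as the sum of the squares of the entries yields the claimed inequality with a constant depending only on $d$ and $\alpha$. The main obstacle, such as it is, is cleanly bookkeeping the norms: the statement is essentially a consequence of Proposition 6.1 applied to the first partial derivatives of $\phi$, combined with the trivial Poisson-integral bound in the tangential directions and harmonicity to eliminate $\partial_{yy}$.
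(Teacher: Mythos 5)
Your proof is correct and follows essentially the same route as the paper: identify $\partial^2\Phi/\partial x_i\partial x_j$ as the Poisson integral of $\partial^2\phi/\partial x_i\partial x_j$ and bound it by the maximum principle, apply Proposition 6.1 to $\partial\phi/\partial x_i$ to control the mixed derivatives, and use harmonicity to eliminate $\partial^2\Phi/\partial y^2$. The bookkeeping of norms you describe matches the paper's argument.
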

\begin{proof}
It is easy to check that $\frac{\partial \Phi}{\partial x_i}$ (resp.
$\frac{\partial^2 \Phi}{\partial x_i \partial x_j }$) is the Poisson
integral of $\frac{\partial \phi}{\partial x_i}$ (resp.
$\frac{\partial^2 \phi}{\partial x_i \partial x_j}$), $i=1, \ldots ,
d$. It follows then that $ ||\frac{\partial^2 \Phi}{\partial x_i
\partial x_j} ||_{\infty} = ||\frac{\partial^2 \phi}{\partial x_i
x_j}||_{\infty} < \infty $, $i, j = 1 , \ldots , d$. For
$\frac{\partial^2 \Phi}{\partial y
\partial x_i}$, apply Proposition $6.1$ to $\frac{\partial
\phi}{\partial x_i}$ instead of $\phi$. Finally, the estimate for
$\frac{\partial^2 \Phi}{\partial y^2}$ follows from harmonicity.
\end{proof}

We will need the following result which says that the Poisson
extension of a periodic function and its derivatives decay
exponentially  at infinity. It is valid under more general
assumptions but this version will be sufficient for applications.
\begin{prop}
Let $\phi: \R^d \to \R $ be a function in the class $C^{2,\alpha}
(\R^d)$. Assume that $\phi$ is $1$-periodic in each variable, that
is: $\phi ( x + e_i ) = \phi (x)$ for all $x\in \R^d$ and any vector
$e_i$ of the canonical basis in $\R^d$, $i = 1,...,d$. Let $\Phi
(x,y)$ be the Poisson extension of $\phi$ to $\R^{d+1}_+$. Then
there are positive constants $C_1 = C_1 (d, \alpha )$, $C_2 = C_2 (d
, \alpha )$ such that, for any $(x,y) \in \R^{d+1}_+$,
\begin{align}
& |\Phi (x,y) - C \int_{Q_0}\phi (z) dz | \leq C_1 ||\phi ||_{\infty}e^{-C_2 y} \\
& |\nabla \Phi (x,y) | \leq C_1 ( ||\phi ||_{\infty} + ||\nabla \phi ||_{\infty} + || \nabla \phi  ||_\alpha  )e^{-C_2 y} \\
& || H\Phi (x,y) || \leq C_1 ( ||\nabla \phi ||_{\infty} + || H\phi
||_\infty  + ||H\phi ||_{\alpha})  e^{-C_2 y}
\end{align}
where $Q_0 = [0,1]^d$ is the unit cube in $\R^d$.
\end{prop}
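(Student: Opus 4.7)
The natural approach is a Fourier series representation on the torus, combined with the pointwise estimates from Propositions 6.1 and 6.2 to handle the small-$y$ regime. First I would expand the $1$-periodic function $\phi$ as
\[
\phi (x) = \sum_{k \in \Z^d} \hat{\phi}(k)\, e^{2\pi i k \cdot x}, \qquad \hat{\phi}(k) = \int_{Q_0} \phi (z) e^{-2\pi i k\cdot z}\, dz,
\]
so that in particular $\hat{\phi}(0) = \int_{Q_0}\phi$ and the constant $C$ in (6.6) is $1$. Since $\phi \in \mathcal{C}^{2,\alpha}(\R^d)$, integration by parts on the torus yields $|\hat{\phi}(k)| \leq C(1+|k|)^{-(2+\alpha)}$, so the series and its first and second termwise derivatives converge absolutely and uniformly.

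Next I would identify $\Phi$ with the series
\[
\Phi (x,y) = \hat{\phi}(0) + \sum_{k \neq 0} \hat{\phi}(k)\, e^{2\pi i k \cdot x}\, e^{-2 \pi |k| y},
\]
by observing that each summand $e^{2\pi i k \cdot x} e^{-2 \pi |k| y}$ is bounded and harmonic in $\R^{d+1}_+$ with boundary value $e^{2\pi i k \cdot x}$, and invoking uniqueness of bounded harmonic extensions of a bounded continuous boundary datum.

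Then I would split the analysis into the two regimes $y \geq 1$ and $0 < y \leq 1$, taking $C_2 = 2\pi$ throughout. For $y \geq 1$, since $|k| \geq 1$ for $k \neq 0$, write $e^{-2\pi |k| y} \leq e^{-2\pi y} e^{-2\pi (|k| - 1)}$; then
\[
|\Phi(x,y) - \hat{\phi}(0)| \leq \|\phi\|_{\infty}\, e^{-2 \pi y} \sum_{k\neq 0} e^{-2\pi (|k|-1)} \leq C \|\phi\|_\infty e^{-2\pi y},
\]
and the analogous bounds for $\nabla \Phi$ and $H\Phi$ follow by termwise differentiation: the extra polynomial factors $|k|$ or $|k|^2$ are dominated by $e^{-2\pi(|k|-1)}$, so the resulting sums still converge and produce constants multiplied by $\|\phi\|_\infty$. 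For $0 < y \leq 1$ the exponential factor $e^{-C_2 y}$ is pinched between $e^{-2\pi}$ and $1$, so the desired inequality reduces up to a constant to a pointwise estimate independent of $y$; these are respectively the trivial $|\Phi|\leq \|\phi\|_\infty$ for (6.6), Proposition 6.1 for (6.7), and Proposition 6.2 for (6.8).

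The only real obstacle is bookkeeping: checking that a single rate $C_2 = 2\pi$ works for all three inequalities and that the Fourier decay absorbs the polynomial factors produced by differentiation. Both become automatic once the Fourier identification of $\Phi$ is made, because any polynomial in $|k|$ is dominated by the factor $e^{-2\pi (|k|-1)}$ uniformly in $y \geq 1$.
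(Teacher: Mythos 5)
Your proof follows essentially the same approach as the paper: both derive the representation $\Phi(x,y)=\hat\phi(0)+\sum_{k\neq 0}\hat\phi(k)\,e^{2\pi i k\cdot x}e^{-2\pi|k|y}$ (the paper reaches it by applying Poisson's summation formula to the periodization of the Poisson kernel; you reach it by termwise extension of the Fourier series of $\phi$ together with uniqueness of bounded harmonic extensions — these are equivalent routes), and then both split into $y\geq 1$ (exponential decay absorbed from the series) and $0<y\leq 1$ (maximum principle, Proposition $6.1$, Proposition $6.2$). One point to tighten: for inequality $(6.8)$ the claimed bound involves only $\|\nabla\phi\|_\infty+\|H\phi\|_\infty+\|H\phi\|_\alpha$, with no $\|\phi\|_\infty$ term, so for $y\geq 1$ the estimate should not be phrased as "constants times $\|\phi\|_\infty$." Instead use the improved coefficient bound coming from integration by parts, e.g. $|(2\pi)^2 k_i k_j\,\hat\phi(k)|=|\widehat{\partial_i\partial_j\phi}(k)|\leq\|H\phi\|_\infty$, so that the Hessian series is controlled by $\|H\phi\|_\infty$ (with the remaining powers of $|k|$ from the $y$-derivatives absorbed by the exponential exactly as you describe). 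The same remark applies to $(6.7)$ if you want the cleanest dependence on norms, though there the presence of $\|\phi\|_\infty$ on the right-hand side means your version already suffices.
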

\begin{proof}

We use Poisson's summation formula (see [8]):
$$
\sum_{k\in \Z^d} \frac{y}{(|x + k|^2 + y^2 )^{\frac{d+1}{2}}} =
\sum_{k\in \Z^d} e^{-2\pi y |k| } \, e^{-2\pi i <k,x>}
$$
that holds for any $(x, y)\in \R^{d+1}_+$. Then, by periodicity,
$$
\Phi (x,y) = C_d \int_{Q_0} \sum_{k\in Z^d} \frac{y}{(|x-z+k|^2 +
y^2 )^{\frac{d+1}{2}}}  \phi (z)dz
$$
so, from  Poisson's formula:
\begin{equation}
\Phi (x,y)  = C_d \big ( \int_{Q_0}\phi (z)dz + \sum_{k\in \Z^d
\setminus \{ 0 \} } e^{-2\pi y|k|} \int_{Q_0} e^{-2\pi i <k, x-z>}
\phi (z) dz \, \big )
\end{equation}
Suppose first that $y\geq 1$. Then
$$
1 + \sum_{k\in \Z^d  }e^{-2\pi y|k|}  \leq \sum_{k\in \Z^d }e^{-2\pi
y( |k_1 | +...+|k_d | )/d} = \big ( \sum_{n= -\infty}^{\infty}
e^{-2\pi y|n|/d} \big )^d = \Big ( \frac{1 + e^{-2\pi y/d}}{1 -
e^{-2\pi y/d}}\Big )^d
$$
and therefore,
$$
|\Phi (x,y) - C_d  \int_{Q_0}\phi (z)dz | \leq ||\phi
||_{\infty}\Big [ \Big ( \frac{1 + e^{-2\pi y/d}}{1 - e^{-2\pi
y/d}}\Big )^d - 1 \Big ] \leq C_1 e^{-C_2 y}||\phi ||_{\infty}
$$
as soon as $y \geq 1 $, where $C_1$, $C_2$ depend only on $d$. If
$0< y<1$, the Maximum Principle gives $|\Phi (x,y)| \leq ||\phi
||_{\infty}$. This proves $(6.1)$. To prove $(6.2)$, we
differentiate in $(6.4)$:
$$
|\nabla \Phi (x,y) | \leq ||\phi ||_{\infty} \sum_{k \in \Z^d
\setminus \{0 \} } |k| e^{-2\pi y |k|}
$$
Suppose that $y\geq 1$. Then , since $te^{-2\pi yt} \leq e^{-\pi
yt}$ if $t\geq 1$ we have
$$
1 + \sum_{k \in \Z^d \setminus \{0 \}}|k|e^{-2\pi y |k|} \leq
\sum_{k\in \Z^d} e^{-\pi y |k|} \leq C_1 e^{-C_2 y}
$$
as above. If $0< y<1$, use Proposition $6.1$. Essentially the same
argument, together with Proposition $6.2$, proves $(6.3)$.
\end{proof}

Lemma $6.5$ below relates the  differentiability properties of a
function $f$ in $\R^d$ to the boundary behaviour of the Hessian of
its harmonic extension F to the upper-half space $\R^{d+1}_+$.
Suppose that $F: \R^{d+1}_+ \to \R$ is smooth, $t>0$ and $e\in
\R^{d+1}\setminus \R^d$ is  unitary. Then it is easy to check that

\begin{align}
 \frac{\partial}{\partial t} ( F(x+te))  & = \nabla F (x +te )\cdot
e \\
\frac{\partial ^2}{\partial t^2} ( F(x + te ))  & = (e^T (HF)e) (x
+te)
\end{align}
where $HF$ is the Hessian matrix of $F$ and we interpret $e^T (HF) e
$ in matrix form , $e^T$ being the vector $e$ written in row form.
We need the following technical proposition, which is a
generalization of formula $(2.1)$.

\begin{prop}
Let $f\in \Lambda_{*} (\R^d )$ and $F$ its Poisson extension to
$\R^{d+1}_{+}$.  Let  $e\in \R^{d+1}\setminus \R^d$ be an unitary
vector. Then for any $x\in \R^d$ and any $y> 0$ the following
identity holds
\begin{equation}
f(x) = \int_0^y t (e^T (HF)e)(x+te)dt - y \nabla F (x +ye)\cdot e +
F(x + ye)
\end{equation}
\end{prop}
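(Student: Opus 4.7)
The plan is to reduce the multidimensional identity to a one-variable integration by parts along the ray $t \mapsto x+te$. First I would define $g(t) = F(x+te)$ for $t \in (0,y]$. Since $F$ is smooth (harmonic) in $\mathbb{R}^{d+1}_+$ and the ray $\{x+te : t>0\}$ lies in $\mathbb{R}^{d+1}_+$ (because the last coordinate of $e$ is positive, which is the natural reading of $e \in \mathbb{R}^{d+1}\setminus\mathbb{R}^d$), the chain rule gives
$$
g'(t) = \nabla F(x+te)\cdot e, \qquad g''(t) = e^T (HF)(x+te)\, e .
$$

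Next, I would use the elementary identity $\frac{d}{dt}[tg'(t) - g(t)] = tg''(t)$. Integrating this from $\varepsilon$ to $y$ and letting $\varepsilon\to 0^+$ yields
$$
\int_0^y t\,(e^T HF e)(x+te)\,dt = y g'(y) - g(y) - \lim_{\varepsilon\to 0^+}\bigl[\varepsilon g'(\varepsilon) - g(\varepsilon)\bigr].
$$
Rearranging and identifying $g(y) = F(x+ye)$, $g'(y) = \nabla F(x+ye)\cdot e$ gives the stated identity, provided I can verify that $\varepsilon g'(\varepsilon)\to 0$ and $g(\varepsilon)\to f(x)$ as $\varepsilon\to 0^+$.

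The main obstacle is justifying the boundary values. For the first limit, since $f\in\Lambda_*(\mathbb{R}^d)$, Proposition 2.2 gives $\nabla F \in \mathcal{B}(\mathbb{R}^{d+1}_+)$; in particular $\|HF(z)\|\leq C/h(z)$ where $h(z)$ is the last coordinate of $z$. Along the ray this yields $\|HF(x+te)\| \leq C'/t$, so integrating from $\varepsilon$ to $1$ gives $|\nabla F(x+te)| \leq C''(1+\log(1/t))$, and hence $\varepsilon g'(\varepsilon) = O(\varepsilon\log(1/\varepsilon))\to 0$. For the second limit, $f$ is bounded and continuous, so its Poisson extension $F$ has non-tangential boundary limit $f(x)$ at every point $x\in \mathbb{R}^d$; since the ray approaches $(x,0)$ non-tangentially (the $(d+1)$-th component of $e$ is a fixed positive constant), $g(\varepsilon)\to f(x)$.

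Substituting these limits into the integration by parts formula produces exactly
$$
f(x) = \int_0^y t\,(e^T HF e)(x+te)\,dt - y\, \nabla F(x+ye)\cdot e + F(x+ye),
$$
as required. I expect the whole argument to be short, with the only delicate point being the Bloch-based bound that kills the boundary term $\varepsilon g'(\varepsilon)$; once that is in place, the identity reduces to standard one-variable calculus, exactly as in the derivation of (2.1), of which this is the directional analogue.
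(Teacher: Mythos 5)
Your argument is correct and is essentially the same as the paper's: the paper observes (via the directional-derivative formulas (6.5)–(6.6)) that the right-hand side of the identity is constant in $y$ and then lets $y\to 0$, using the Bloch bound $\|HF(x+te)\|\lesssim\|f\|_*/t$ to control the integral and the term $y\,\nabla F(x+ye)\cdot e$; your one-variable integration by parts $\frac{d}{dt}[tg'(t)-g(t)]=tg''(t)$ is just a repackaging of that same computation, and you verify the same two boundary limits ($\varepsilon g'(\varepsilon)\to 0$ from the Bloch estimate, $g(\varepsilon)\to f(x)$ from non-tangential convergence of the Poisson integral of a bounded continuous function).
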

\begin{proof}
Note that, from  $(6.5)$ and $(6.6)$, the right hand side term in
$(6.7)$ is independent of $y$. On the other hand, the Zygmund
condition and Proposition $2.2$ give
$$
| (e^T (HF)e)(x +te) | \leq \frac{C||f||_* }{t}
$$
and
$$
|\nabla F (x +ye) \cdot e | \leq C ||f||_* \log (\frac{1}{y} )
$$
where $C$ is a positive constant depending on $d$ and $e$. The lemma
follows letting $y$ tend to $0$.
\end{proof}

\begin{lem}
Let $f\in \Lambda_* (\R^d ) $ and let  $F$ be the Poisson extension
of $f$ to $\R^{d+1}_+$. Suppose that there are a unitary vector $e
\in \R^{d+1}$ and an open cube  $Q \subset \R^d$ for which
\begin{equation}
\sup_{(x, y) \in Q \times (0, 1]} |(HF)e (x,y)| < \infty \, .
\end{equation}
Then
\begin{enumerate}
\item
If $e\in \R^d$, both limits
$$
\lim_{t\to 0}\frac{f(x + te)-f(x)}{t} \, \, , \, \, \, \lim_{t\to 0}
\nabla F (x, t) \cdot e
$$
exist for any $x\in Q$.
\item
If $e\in \R^{d+1}\setminus \R^d$ then for any $i =1,...,d$
$$
\frac{\partial f}{\partial x_i}(x) \, \, , \, \, \, \, \lim_{t\to 0}
\frac{\partial F}{\partial x_i}( x +te)
$$
exist for any $x\in Q$.
\end{enumerate}
\end{lem}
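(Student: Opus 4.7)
The plan rests on a single observation: $\partial_e F := \nabla F\cdot e$ is harmonic (being a linear combination of partial derivatives of the harmonic function $F$), and $\nabla(\partial_e F) = (HF)e$, so hypothesis (6.8) says precisely that $\partial_e F$ is Lipschitz on $Q\times(0,1]$. Therefore $\partial_e F$ extends continuously to $\overline Q\times[0,1]$, and interior harmonic estimates applied to this bounded harmonic function promote the bound to $|\nabla^2 \partial_e F(x,y)|\le C/y$ on slightly smaller cylinders. These two facts do all the work.

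For part~1, where $e\in\R^d$, the continuous extension of $\nabla F\cdot e$ to the closure gives $\lim_{y\to 0}\nabla F(x,y)\cdot e$ at every $x\in Q$ immediately. For the directional derivative of $f$, I would rerun the proof of Proposition~2.3 for the specific increment $h=te$, writing $f(x+te)-f(x)-t\,\nabla_x F(x,t)\cdot e = \Delta_1-\Delta_2+\Delta_3$. Under (6.8), each $\Delta_j$ improves to $O(t^2)$ rather than $O(t)$: the term $\Delta_1 = \int_0^t (t-s)\,e^T(HF)e\,(x+se,t)\,ds$ is controlled because $|e^T(HF)e|\le|(HF)e|$ is bounded by hypothesis; $\Delta_2$ is controlled by the mean value theorem along the $e$-direction, since the gap $\partial_e\partial_y F$ is a coordinate of $(HF)e$; and $\Delta_3$ requires the sharper estimate $|\partial_e\partial_{yy}F(\cdot,s)|\le C/s$, which comes from the interior harmonic estimate on the bounded harmonic function $\partial_e F$. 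Dividing by $t$ and passing to the limit both gives the existence of the directional derivative and identifies it with $\lim_{t\to 0}\nabla_x F(x,t)\cdot e$.

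For part~2, where $e_{d+1}\neq 0$ (take $e_{d+1}>0$), I would apply Proposition~6.4 at both $x+he_i$ and $x$, subtract, and divide by $h$. The two non-integral terms in the resulting identity converge as $h\to 0$ by smoothness of $F$ in $\R^{d+1}_+$, yielding $-y\,\partial_{x_i}\partial_e F(x+ye)$ and $\partial_{x_i} F(x+ye)$ respectively. The integral term becomes
\begin{equation*}
\int_0^y t\,\frac{(e^T(HF)e)(x+he_i+te)-(e^T(HF)e)(x+te)}{h}\,dt.
\end{equation*}
By the mean value theorem in $x_i$, the inner quotient equals $\partial_{x_i}(e^T(HF)e)$ at an intermediate point whose height is $te_{d+1}$; since $e^T(HF)e$ is harmonic and bounded on $Q\times(0,1]$ by (6.8), interior estimates give $|\partial_{x_i}(e^T(HF)e)(x+te)|\le C/t$, so the full integrand is uniformly bounded by a constant and dominated convergence applies. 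This shows $\partial f/\partial x_i(x)$ exists and equals, for every sufficiently small $y>0$, the expression
\begin{equation*}
\int_0^y t\,\partial_{x_i}(e^T(HF)e)(x+te)\,dt \;-\; y\,\partial_{x_i}\partial_e F(x+ye) \;+\; \partial_{x_i} F(x+ye).
\end{equation*}
Since the left-hand side is independent of $y$, letting $y\to 0$ and noting that both the integral ($\le Cy$) and the middle term ($\le Cy$, as $\partial_{x_i}\partial_e F$ is the $i$-th component of $(HF)e$) vanish yields $\partial f/\partial x_i(x) = \lim_{y\to 0} \partial F/\partial x_i (x+ye)$, proving both assertions of part~2 simultaneously.

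The main technical obstacle, both for the $\Delta_3$ estimate in part~1 and for the dominated convergence argument in part~2, is showing that the outer factor of $t$ (resp.\ $s$) tames an integrand which, on its face, looks singular as $t\to 0$. The hypothesis (6.8) alone bounds $e^T(HF)e$ but does not directly control its derivatives; it is the $1/y$ gain from interior harmonic estimates on the bounded harmonic functions $\partial_e F$ and $e^T(HF)e$ that converts the hypothesis into a $1/t$ bound on the relevant derivative, which is exactly balanced by the outer weight. A small bookkeeping point is that (6.8) is stated on $Q\times(0,1]$; the interior estimates must be applied on a slightly smaller cube $Q'\Subset Q$ and for $y$ correspondingly small, but this is harmless since the final statements are local in $x\in Q$.
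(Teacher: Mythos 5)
Your proof is correct and rests on the same basic machinery as the paper's (Proposition~6.4 together with Cauchy interior estimates for harmonic functions), but the organization differs enough, especially in part~2, to be worth comparing.

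For part~1 the two arguments are essentially the same. The paper applies Proposition~6.4 in the vertical direction and writes $f(x+te)-f(x)=I_1+I_2+I_3$, showing $|I_1|,|I_2|\le Ct^2$ and then arguing separately that $I_3/t=\tfrac1t\int_0^t\nabla_x F(x+se,t)\cdot e\,ds$ converges. You subtract $t\,\nabla_x F(x,t)\cdot e$ up front (the $\Delta_1,\Delta_2,\Delta_3$ decomposition of Proposition~2.3) so that all three terms become $O(t^2)$; this folds the paper's treatment of $I_3$ into the $\Delta_1$ estimate but carries the same content. The $\Delta_3$ step is where the $1/s$ gain is needed, and your route to it is right; one phrasing slip: you say the bound $|\nabla^2\partial_e F|\le C/y$ comes from ``interior harmonic estimates applied to this bounded harmonic function'' ($\partial_e F$), but a second-order interior estimate on $\partial_e F$ alone gives only $C/y^2$. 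What you actually need (and what you in fact use) is a first-order interior estimate on $\nabla\partial_e F=(HF)e$, which is bounded by the hypothesis. That is a misattribution of the estimate, not a gap.

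For part~2 the routes genuinely diverge. The paper introduces an intermediate scale $M(t)\to\infty$ with $tM(t)\to 0$, applies Proposition~6.4 with $y=M(t)t$, proves $|I_1|,|I_2|\le CMt^2$, and extracts the limit from $I_3/t$ after first establishing (from the Cauchy criterion and (6.8)) that $\lim_{t\to 0}\partial_{x_i}F(x+te)$ exists. You instead fix $y>0$, difference the Proposition~6.4 identity at $x+he_i$ and $x$, divide by $h$, and pass to the limit $h\to 0$ by dominated convergence: the weight $t$ in the integrand exactly compensates the $C/(te_{d+1})$ interior estimate on $\nabla(e^T(HF)e)$ (which is bounded and harmonic on $Q\times(0,1]$), so the integrand is uniformly bounded. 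This yields a representation of $\partial f/\partial x_i(x)$ valid for all small $y$, and letting $y\to0$ both identifies the limit of $\partial_{x_i}F(x+ye)$ and shows it exists. Your variant avoids the $M(t)$ scaling device entirely and is arguably more transparent; the paper's version requires no dominated convergence but pays for it with the bookkeeping of the auxiliary scale. Both are complete. The localisation remark at the end (shrinking to $Q'\Subset Q$ and small $y$) is the right way to handle the fact that (6.8) is stated on $Q\times(0,1]$ and the conclusion is pointwise in $x\in Q$.
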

\begin{proof}
From $(6.8)$, for any $x \in Q$, one has
$$
\sup_{0< y<1}| \frac{\partial}{\partial y}\nabla F(x,y)\cdot e | <
\infty
$$
which implies the existence of $\lim_{y\to 0}\nabla F (x,y)\cdot e$.

Assume first that $e\in \R^d$. Apply Proposition $6.4$ to the normal
unit vector in the $y$-direction. Then
$$
f(x +te) -f(x) = I_1 + I_2 + I_3
$$
where
\begin{align*}
& I_1 = \int_{0}^t y \big [ \frac{\partial ^2 F}{\partial
y^2}(x+te, y)- \frac{\partial^2 F}{\partial y^2}(x, y) \big ] dy \, , \\
& I_2 = t \big [  \frac{\partial F}{\partial y}(x+te, t) -
\frac{\partial F}{\partial y}(x, t) \big ] \, , \\
& I_3 = F(x+te, t)-F(x,t)
\end{align*}
From $(6.8)$ and the Cauchy estimates for harmonic functions it
follows that
$$\sup_{Q\times (0,1]} y | \frac{\partial^2}{\partial y^2}\nabla
F (x,y)\cdot e | < \infty \, , $$
 which implies that $|I_1 | \leq
Ct^2$. By $(6.8)$, $ |I_2 | \leq C t^2$ so both $I_1$ and $I_2$ are
$o(t)$ as $t\to 0$.  Finally, observe that
$$
\frac{I_3}{t} = \frac{1}{t}\int_0^t \nabla F ( x + se, t) \cdot e ds
$$
and that, again from $(6.8)$,
$$
| \nabla F ( x + se, t) - \nabla F ( x, t) | \leq Cs
$$
Therefore we deduce that $I_3 / t $ has a limit as $ t\to 0$ and
part i)  follows.

Now assume $e\in \R^{d+1} \setminus \R^d$. Fix $i=1, \ldots , d$.
From $(6.8)$ we get
$$
|\frac{\partial F}{\partial x_i}(x + te) - \frac{\partial
F}{\partial x_i} ( x + se) | \leq C |t-s|
$$
so $\lim_{t\to 0}\frac{\partial F}{\partial x_i}( x +te)$ exists.
Now for $t>0$, choose $M= M(t) > 0$  so that $M(t)\to \infty$ and
$tM(t)\to 0 $ as $t\to 0$. From Proposition $6.4$ we get
\begin{align*}
 f(x +te_i ) - f(x)  & = \int_{0}^{Mt}s \big [ (e^T (HF)e)(x +te_i +se ) - (e^T (HF)e)(x+se) ] ds \\
& - tM \big [  \nabla F (x + te_i +tMe)\cdot e - \nabla F (x +
tMe)\cdot  e  \big ] \\ & + F(x+te_i + tMe )-F(x + tMe)  = I_1 - I_2
+I_3
\end{align*}
As previously, Cauchy's estimates give that, $| (e^T (HF)e)(x +te_i
+se) - (e^T (HF)e)(x+se)| \leq  Ct/s$ so $|I_1 | \leq CMt^2$. In the
same way, $|I_2 | \leq CM t^2$. As for $I_3$,
$$
\frac{I_3}{t} = \frac{1}{t}\int_0^t \frac{\partial F}{\partial x_i }
(x +se_i +tMe )ds
$$
and, from Propositions $2.1$ and $2.2$,
$$
|\frac{\partial F}{\partial x_i}(x +se_i + tMe) - \frac{\partial
F}{\partial x_i}(x + tMe)| \leq C||f||_* \frac{s}{tM} \leq
\frac{C||f||_*}{M}
$$
Therefore for any $0<s<t$,
$$\frac{I_3}{t} = \frac{\partial F}{\partial x_i}( x + tMe)
+ O(\frac{1}{M})
$$
and the lemma follows from the choice of $M$ and the existence of
$\lim_{t \to 0}\frac{\partial F}{\partial x_i}( x + te)$.
\end{proof}

\section{Functions of Weierstrass type.}

Now we turn to functions of Weierstrass type. Fix $b>1$ and let
$\phi: \R^d \to \R $ be as in the statement of Theorem $3$. Let $f =
f_{b, \phi}$ be the Weierstrass function associated to $b$, $\phi$:
\begin{equation}
f(x) = \sum_{n=0}^{\infty} b^{-n} \, \phi (b^n x)
\end{equation}
Denote by $\Phi$ (resp. $F$) the Poisson extension of $\phi$ (resp.
$f$) to $\R^{d+1}_+$. Then
\begin{equation}
F(x,y) = \sum_{n=0}^{\infty} b^{-n} \Phi (b^n x , b^n y)
\end{equation}
which leads to the following functional equations, that will be used
later:
\begin{equation}
F(b x , by ) = b \, F (x, y) - b \Phi ( x, y)
\end{equation}
and by differentiation:
\begin{align}
\nabla F (b x, b y ) = &  \, \nabla F ( x,y ) - \nabla \Phi
( x, y) \\
b \, HF (b x , b y) = & \, HF(x,y ) - H\Phi (x,y)
\end{align}

\begin{prop}
Let $f$ (resp. $F$) be  as in $(7.1)$ (resp. $(7.2)$).  Then $f\in
\Lambda_* (\R^d )$ and
$$
 ||f||_*  \leq C_1 ( ||\phi ||_\infty + ||H\phi ||_\infty )
$$
Furthermore,
$$
 \sup \{ y \, ||HF(x,y)|| \, \, : \, \, \, (x,y)\in \R^{d+1}_+ \}  \leq
C_2 ( ||\phi ||_{\infty} + ||H\phi ||_\infty )
$$
where  $C_1$ and $C_2$ are positive constants  depending only on $d$
and $b$.
\end{prop}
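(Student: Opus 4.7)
The plan is to handle the two conclusions in two short moves, the second being essentially a corollary of the first.

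\medskip

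First, I would establish the Zygmund estimate by the standard lacunary-series decomposition. Given $x, h \in \R^d$ with $h\neq 0$, write
$$
f(x+h) + f(x-h) - 2f(x) = \sum_{n=0}^{\infty} b^{-n} \Delta_n, \qquad \Delta_n = \phi(b^n x + b^n h) + \phi(b^n x - b^n h) - 2\phi(b^n x),
$$
and split the sum at the critical index $N$ determined by $b^N |h| \sim 1$. For $n \leq N$, Taylor expansion to second order combined with the $L^{\infty}$ bound on $H\phi$ yields $|\Delta_n | \leq ||H\phi ||_{\infty}(b^n |h|)^2$, so the low-frequency contribution is at most $||H\phi ||_{\infty}|h|^2 \sum_{n\leq N} b^n \leq C ||H\phi ||_{\infty}|h|$. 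For $n > N$ the trivial estimate $|\Delta_n | \leq 4 ||\phi ||_{\infty}$ together with the geometric tail $\sum_{n > N} b^{-n} \leq C b^{-N} \sim |h|$ gives the complementary bound $C ||\phi ||_{\infty}|h|$. Adding these yields $||f||_{*} \leq C_1 ( ||\phi ||_{\infty} + ||H\phi ||_{\infty})$ with $C_1 = C_1 (d,b)$.

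\medskip

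Second, the Hessian estimate follows at once from Proposition $2.2$ applied to $f$: since $f\in \Lambda_*(\R^d )$ with controlled norm, its Poisson extension satisfies $\nabla F \in \B (\R^{d+1}_+ )$ and
$$
\sup_{(x,y)\in \R^{d+1}_+} y \, ||HF(x,y)|| \; = \; ||\nabla F||_{\B} \; \leq \; C ||f||_{*} \; \leq \; C_2 ( ||\phi ||_{\infty} + ||H\phi||_{\infty}),
$$
with $C_2 = C_2 (d,b)$.

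\medskip

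I do not anticipate any real obstacle, since both steps are standard. The only care required is the choice of the splitting index $N$ so that each partial sum is dominated by a single geometric series. I would deliberately avoid bounding $HF$ termwise via the differentiated identity $HF(x,y) = \sum_{n\geq 0} b^n H\Phi(b^n x, b^n y)$ coming from $(7.2)$: although Proposition $6.3$ provides the exponential decay of $H\Phi$ at infinity needed to make that series converge absolutely, such an approach would introduce $||\nabla \phi ||_{\infty}$ and $||H\phi||_{\alpha}$ on the right-hand side, neither of which is allowed in the stated bound. The functional identities $(7.3)$--$(7.5)$ are instead better reserved for the later arguments that extract \emph{lower} bounds on $HF$, where the self-similar structure of $F$ plays an essential role.
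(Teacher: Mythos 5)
Your proof is correct and follows essentially the same route as the paper: the standard lacunary splitting at the critical index $N$ with $b^N|h|\sim 1$, using the second-difference bound $|\Delta^2_h\phi|\lesssim\|H\phi\|_\infty|h|^2$ for the low frequencies and the trivial $L^\infty$ estimate for the high-frequency tail, and then invoking Proposition 2.2 to pass from $\|f\|_*$ to $\sup y\,\|HF\|$. Your closing remark about why the termwise Hessian estimate via $(7.2)$ and Proposition 6.3 should be avoided (it would smuggle $\|\nabla\phi\|_\infty$ and $\|H\phi\|_\alpha$ into the bound) is a correct and well-motivated observation, consistent with the paper's choice.
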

\begin{proof}
The fact that  $f \in \Lambda_{*}(\R^d )$ uses an standard trick in
the theory of lacunary series. If $h \in \R^d $, denote $\Delta_h^2
f(x) = f(x+h) + f(x-h) -2f(x)$. If $|h| \geq 1 $, we have
$$
||\Delta_h^2 f ||_{\infty} \leq  4 ||f||_{\infty} \leq C(b) |h|
||\phi ||_{\infty}
$$
so we can assume that $|h| < 1$. Choose $N\geq 0$ so that $b^{-(N
+1)} < |h| \leq b^{-N}$. A simple computation shows that if $a$,
$h\in \R^d$ then
\begin{equation}
|\Delta_h^2 \phi (a) | \leq C(d)\, ||H \phi||_\infty \, |h|^2
\end{equation}
Now split $\Delta_h^2 f(x) = A + B $ where
$$
A = \sum_{n=0}^N b^{-n}\Delta_{b^n h}^2 \phi (b^n x ) \, \, , \, \,
\, B =\sum_{n=N +1}^{\infty} b^{-n}\Delta_{b^n h}^2 \phi (b^n x )
$$
From $(7.6)$ and the choice of $N$ we get that $|A| \leq C(d, b )|h|
||H\phi ||_\infty$. On the other hand, $|B| \leq 4
b^{-N}(b-1)^{-1}||\phi ||_{\infty} \leq C(b) |h| ||\phi ||_{\infty}
$. Hence $||f||_*  \leq C_1 ( ||\phi ||_\infty + ||H\phi ||_\infty
)$. The last assertion follows from  Proposition $2.2$.
\end{proof}

We remind now condition $\mathcal{H}$ on $\phi$, which, in the case
$d=1$, was used by Heurteaux ( see Theorem $3.1$ in [10]). We say
that $\phi $ satisfies condition $\mathcal H$ if, for any unitary
vector $e\in \R^d$, either $D_e \phi (0) \neq 0$ or the one-variable
function $t\to \phi (te)$,  is non constant and has a global
extremum at $t=0$.

\

\begin{prop} Let $b > 1$ and let $\phi$ be as in Proposition $6.3$.
Let $f = f_{b, \phi}$ be the Weierstrass-type function associated to
$b$, $\phi$ defined by $(7.1)$ and let $F$ be the Poisson extension
of $f $ to $\R^{d+1}_+ $. Assume in addition that $\phi$ satisfies
condition $\mathcal H$. Then for any $M> 0$  there exists $\eta  $,
$0< \eta < 1$ such that
$$
\inf_{|e| = 1} \, \sup_{(x,y) \in Q_0 \times [\eta , 1]}|(HF)e(x,y)
|
> M
$$
where $Q_0 = [-1/2, 1/2 ]^d$ and the infimum is taken over all
unitary $e\in \R^{d+1}$.
\end{prop}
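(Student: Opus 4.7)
My plan is a two-step argument: a compactness / lower-semicontinuity reduction that converts the desired uniform-in-$e$ estimate into a pointwise-in-$e$ unboundedness statement, followed by a Heurteaux-type contradiction at the origin using Lemma $6.5$ and condition $\mathcal H$.

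For the reduction, suppose toward contradiction that the conclusion fails. Then there exist $M > 0$, $\eta_k \downarrow 0$ and unit vectors $e_k \in \R^{d+1}$ with $\sup_{Q_0 \times [\eta_k, 1]} |(HF)e_k| \leq M$. Extracting a subsequence, $e_k \to e$ for some unit $e$. For any fixed $(x,y) \in Q_0 \times (0, 1]$, eventually $\eta_k \leq y$, so $|(HF)e_k(x,y)| \leq M$; continuity in $e$ gives $|(HF)e(x,y)| \leq M$. Hence $\sup_{Q_0 \times (0, 1]}|(HF)e| \leq M$ for some unit $e \in \R^{d+1}$. Lemma $6.5$, applied on the interior of $Q_0$, then produces a unit tangential vector $v \in \R^d$ (namely $v = e$ if $e \in \R^d$, and $v = e_1$ otherwise) such that $v$ satisfies $\mathcal H$ and the directional derivative $\partial_v f(0) := \lim_{t\to 0}(f(tv) - f(0))/t$ exists. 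It therefore suffices to rule out the existence of $\partial_v f(0)$.

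Write $\psi(t) = \phi(tv)$, a bounded $\mathcal C^{2,\alpha}$ function on $\R$, and $g(t) = f(tv) = \sum_{n \geq 0} b^{-n} \psi(b^n t)$, so that
$$
g(b^{-N}) - g(0) = \sum_{n \geq 0} b^{-n}\bigl[\psi(b^{n-N}) - \psi(0)\bigr].
$$
In the subcase $\psi'(0) = D_v \phi(0) \neq 0$, I would split the sum at $n = N$. For $n < N$, a Taylor expansion gives $\psi(b^{n-N}) - \psi(0) = \psi'(0) b^{n-N} + O(b^{2(n-N)})$, whose contribution sums to $\psi'(0) N b^{-N} + O(b^{-N})$; for $n \geq N$, boundedness of $\psi$ yields a further $O(b^{-N})$ contribution. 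Thus $g(b^{-N}) - g(0) = \psi'(0) N b^{-N} + O(b^{-N})$ and $(g(b^{-N}) - g(0))/b^{-N} \to \infty$, ruling out existence of $g'(0)$. In the subcase $\psi'(0) = 0$ with $\psi$ nonconstant and (say) attaining its global maximum at $0$, every term in the last display is nonpositive, so $g(t) \leq g(0)$; existence of $g'(0)$ then forces $g'(0) = 0$. Choosing $t_0$ with $\psi(t_0) < \psi(0)$ and keeping only the $n = N$ term in the expansion of $g(b^{-N}t_0) - g(0)$ yields $g(b^{-N}t_0) - g(0) \leq [\psi(t_0) - \psi(0)] b^{-N}$, so $(g(b^{-N}t_0) - g(0))/(b^{-N}t_0)$ is bounded away from $0$, again contradicting $g'(0) = 0$.

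The main technical point is controlling the $O(b^{-N})$ remainders uniformly in $N$ in the nondegenerate subcase; the $\mathcal C^{2, \alpha}$ hypothesis on $\phi$ is comfortable enough to absorb these errors. The compactness/LSC reduction, the application of Lemma $6.5$, and the one-sign argument in the extremum subcase are routine.
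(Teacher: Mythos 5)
Your proof is correct and follows essentially the same skeleton as the paper's: a compactness argument in $e$ yields a single unit vector $e$ with $\sup_{Q_0 \times (0,1]}|(HF)e| \le M$, and then Lemma~$6.5$ at $x=0$ together with condition $\mathcal H$ produces a contradiction. The one genuine (though minor) divergence is in how you reach the contradiction. After Lemma~$6.5$ furnishes a tangential direction $v$ (with $v=e$ if $e\in\R^d$, and a coordinate direction otherwise) along which $\partial_v f(0)$ exists, you treat both alternatives of $\mathcal H$ head-on: if $D_v\phi(0)\neq 0$ you run a Taylor plus lacunary-tail estimate to show the difference quotients of $t\mapsto f(tv)$ grow like $N$ at scale $b^{-N}$; if $\phi(\cdot v)$ has a global extremum at $0$ you run the one-sign argument. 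The paper instead uses the second conclusion of Lemma~$6.5$ (existence of $\lim_{t\to 0}\nabla F(\cdot)\cdot e$) together with the functional equation $(7.4)$ to first deduce $\nabla\phi(0)\cdot e=0$, so that $\mathcal H$ forces the extremum case, and then runs only that case. Your route avoids the functional equation at the cost of an extra Taylor estimate (which is harmless under the $\mathcal C^{2,\alpha}$ hypothesis); both variants are implicit in Heurteaux's argument. Two small points worth tightening: condition $\mathcal H$ is a hypothesis on $\phi$, not a property of the vector $v$, so the phrase ``$v$ satisfies $\mathcal H$'' should be ``the alternative in $\mathcal H$ applies to the direction $v$''; and when invoking Lemma~$6.5$ you should note explicitly that $0$ lies in the open interior of $Q_0$, since the lemma is stated for open cubes.
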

\begin{proof}
Suppose that the conclusion does not hold. Then there is $M>0$ and a
sequence $\{ e_n \}$ of unitary vectors in $\R^{d+1}$ such that
$$
\sup_{(x,y) \in Q_0 \times [1/n, 1]}|(HF)e_n (x,y) | \leq  M
$$
Taking a subsequence if necessary we can assume that $e_n \to e$ ,
where $e\in \R^{d+1}$ is also unitary and satisfies
$$
\sup_{(x,y) \in Q_0 \times (0, 1]}|(HF)e(x,y)| \leq  M
$$
We will apply Lemma $6.5$ only in the case $x =0$. If $e\in \R^d$
then, from Lemma $6.5$, functional equation $(7.4)$ and the fact
that $\nabla \Phi $ is continuous up to the boundary we get
$$
\nabla \Phi (0,0) \cdot e = \nabla \phi (0)\cdot e = 0
$$
The rest of the argument follows [10]. By condition $\mathcal H$,
the function $\phi (te )$ is non-constant and has a global extremum
at $t = 0$, say a global maximum. In particular, $f(te)$ has also a
global maximum at $t=0$ and, from Lemma $6.5$,
$$
\lim_{t\to 0} \frac{f(te) - f(0)}{t} = 0
$$

Fix $t \in \R$. Now for each positive integer $n$,
$$
f(b^{-n}te ) -f(0) = \sum_{k=0}^{\infty} b^{-k}\big ( \phi
(b^{k-n}te) - \phi (0) \big ) \leq b^{-n} \big ( \phi (te) - \phi
(0) \big )
$$
Therefore
$$
0 = \lim_{n\to \infty }\frac{f(b^{-n}te ) - f(0) }{b^{-n}} \leq \phi
(te) - \phi (0)
$$
which contradicts the fact that $\phi (te)$ is non-constant and has
a global maximum at $t=0$.

If $e\in \R^{d+1} \setminus \R^d$ then we deduce, again from Lemma
$6.5$,  that for each $i = 1, 2, \ldots , d $
$$
\lim_{t\to 0} \frac{\partial \Phi}{\partial x_i}(te) =
\frac{\partial \phi}{\partial x_i} = 0
$$
so $\nabla \phi (0) = 0$. The same argument above, applied to any of
the coordinate directions, provides a contradiction as well.

\end{proof}

We are now ready to prove that, provided $\phi $ satisfies condition
$\mathcal H$, then the gradient of the Poisson extension of $f_{b,
\phi}$ defined by $(7.1)$ verifies a $1/N$-weak QR condition for
some integer $N \geq 2$. We need to recall the concept of almost
periodic function and some of their basic properties.

Given $g : \R^d \to \R$ and $\epsilon >0$, we say that $\tau \in
\R^d$ is an \textit{almost period} of $g$ relative to $\epsilon$ if
$$
\sup \{ |g(x +\tau ) - g(x) |\, \, : \, \, x\in \R^d \} \leq
\epsilon
$$
A continuous function $g: \R^d \to \R$ is said to be \textit{almost
periodic }(a.p.)  if for any $\epsilon
> 0$ there exists $l> 0$ such that any cube $Q\subset \R^d$ of
sidelength $l$ contains an almost period of $g$ relative to
$\epsilon$. A mapping  $g: \R^d \to \R^d$ is almost periodic if so
is each of its components. As in the classical case $d=1$, almost
periodic functions in $\R^d$  turn out to be those that can be
uniformly approximated by trigonometric polynomials. Any continous
function which is periodic in each variable is almost periodic in
$\R^d $. It can also be shown that finite sums and uniform limits of
almost periodic functions are almost periodic too. We refer to the
classical monograph by Besicovitch ([3]) for these and other
results.

Now let $\phi $, $f_{b, \phi}$ and  their respective Poisson
extensions $\Phi$ and $F$ be as in the beginning of the section.
Differentiating $(7.2)$ twice we get
$$
HF(x,y) = \sum_{n=0}^{\infty} b^n H\Phi (b^n x , b^n y)
$$
where the series converges uniformly on any strip $0< a \leq y \leq
b$ thanks to the exponential decay provided by Proposition $6.3$. In
our case of interest,  $0< a = \eta (\phi , b, d)< 1 = b $. Now for
$\eta \leq y \leq 1$ and  $e\in \R^{d+1}$, unitary,  consider the
mapping
$$
x\rightarrow (HF)e (x, y ) \, \, \, , \, \, \, (x \in \R^d )
$$
Since, for each $n\geq 0$, $H\Phi (b^n x , b^n y ) e$ is
$b^{-n}$-periodic in $x$, it follows from uniform convergence that
$(HF)e(x, y )$ is a.p. in $x$. But actually, a bit more is true:
from the inequality
$$
|(HF)e(x+\tau , y ) - (HF)e(x,y)| \leq ||HF(x+\tau , y ) - HF(x, y )
||
$$
and the basic properties of almost periodic functions it can be
shown that the almost periodicity of $x \to (HF)e(x, y)$ is uniform
in $e$ and $y\in [\eta  , 1]$ in the sense that, given $\epsilon
>0$, the $l$ in the definition of almost periodicity will only
depend on $\epsilon$, $\phi$, $b$ and $d$ but not on $e$.  This fact
will be useful in the proof of the following lemma.

For $0< \delta < 1$ and any cube $Q\subset \R^d$ we denote, as in
section $4$,  by $C_{\delta }(Q) = Q \times [\delta l(Q) , l(Q)]$
the $\delta$-Carleson box associated to $Q$.

\begin{lem}
Let  $\phi : \R^d \to \R$ be as in Proposition $6.3$. Assume in
addition   that $\phi$  satisfies condition $\mathcal H$. For $b
> 1$, let $f$ be the Weierstrass function associated to $b$
and $\phi$ as in $(7.1)$ and let $F$ be the Poisson extension of
$f$, given by $(7.2)$. Then there are positive constants $\delta =
\delta (  \phi , b  ) < 1$ and $c = c( \phi , b )$ such that for any
cube $Q\subset \R^d$ of sidelength $l(Q) \leq 1$
$$
\inf_{|e| =1} \, \sup_{(x,y) \in C_{\delta}(Q)}|(HF)e(x,y)| \geq
\frac{c}{y}
$$
where the infimum is taken over all unitary vectors $e\in \R^{d+1}$.
\end{lem}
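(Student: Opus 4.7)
The plan is to combine three ingredients: the qualitative lower bound of Proposition 7.2 at a fixed scale, the uniform (in $e$ and $y$) almost periodicity of $(HF)e$ in the $x$-variable established in the discussion preceding the lemma, and the self-similar functional equation $(7.5)$ which transfers estimates from unit scale down to scale $l(Q)$. Throughout I will write $A$ for a uniform bound on $\|H\Phi(x,y)\|$, which exists by Proposition 6.2.

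First, I would choose a specific $M = M(\phi,b)$ large enough that $c_1 := M/2 - A/(b-1) > 0$, and let $\eta = \eta(M) \in (0,1)$ be the constant supplied by Proposition 7.2. Uniform almost periodicity then produces an $L = L(\phi,b,d) > 0$ such that every cube in $\R^d$ of sidelength $L$ contains a vector $\tau$ which is simultaneously an $(M/2)$-almost period of $x \mapsto (HF)e(x,y)$ for every $y \in [\eta,1]$ and every unitary $e \in \R^{d+1}$. For any unitary $e$, Proposition 7.2 furnishes a point $(x_e,y_e) \in Q_0 \times [\eta,1]$ with $|(HF)e(x_e,y_e)| > M$, so $|(HF)e(x_e+\tau, y_e)| > M/2$ for any such $\tau$.

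Now, given a cube $Q \subset \R^d$ with $l(Q) \leq 1$, let $m$ be the smallest positive integer with $b^m l(Q) \geq L+1$; by minimality $b^m l(Q) \leq b(L+1)$. The rescaled cube $b^m Q$ contains a concentric sub-cube of sidelength $L$, hence an almost period $\tau$ such that $Q_0 + \tau \subset b^m Q$. Set $x^* := (x_e + \tau)/b^m \in Q$ and $y^* := y_e/b^m$. Iteration of the functional equation $(7.5)$ gives
\begin{equation*}
HF(x^*, y^*) = b^m\, HF(b^m x^*, b^m y^*) + \sum_{k=0}^{m-1} b^k\, H\Phi(b^k x^*, b^k y^*).
\end{equation*}
Applying this to $e$ and using $|(HF)e(b^m x^*, b^m y^*)| > M/2$ together with $\|H\Phi\| \leq A$, we get $|HF(x^*, y^*)e| \geq b^m(M/2) - A(b^m-1)/(b-1) \geq b^m c_1$. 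Since $y^* \geq \eta/b^m$, this yields $|HF(x^*,y^*)e| \geq \eta c_1/y^*$.

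It remains to verify $(x^*, y^*) \in C_\delta(Q)$ for an appropriate $\delta = \delta(\phi,b)$: $x^* \in Q$ is automatic, $y^* \leq 1/b^m \leq l(Q)/(L+1) \leq l(Q)$, and $y^* \geq \eta/b^m \geq \eta\, l(Q)/(b(L+1))$, so it suffices to set $\delta := \eta/(b(L+1))$ and $c := \eta c_1$. The main obstacle is the uniformity of the almost period $\tau$ across all unitary $e \in \R^{d+1}$ — without it the choice of $m$, and hence $\delta$, would depend on $e$ and the estimate would collapse. This is exactly what the uniform almost periodicity established before the lemma delivers, and it is the reason why Proposition 7.2 (rather than a merely pointwise statement) is the right ingredient to quote.
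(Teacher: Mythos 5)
Your argument is correct and follows essentially the same route as the paper: iterate the functional equation $(7.5)$ to transfer the qualitative lower bound of Proposition 7.2 from the fixed box $Q_0\times[\eta,1]$ down to the Carleson box $C_\delta(Q)$, using the uniform (in $e$ and $y$) almost periodicity to place the good point inside the rescaled cube. The only cosmetic differences are that the paper fixes $M=3\|H\Phi\|_\infty/(b-1)$ and shifts the rescaled cube by the ($e$-dependent) point $x_0$ to locate the almost period, while you keep $M$ generic and instead demand $b^m l(Q)\geq L+1$ so that the whole translate $Q_0+\tau$ fits inside $b^m Q$ — both devices handle the $e$-dependence of $x_e$ in the same way and give the same constants up to inessential factors.
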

\begin{proof}
From the functional equation $(7.5)$ for the Hessian we get
\begin{equation}
(HF )e (b^{-1}x, b^{-1}y ) = b (HF )e(x,y) + (H\Phi )e(b^{-1}x ,
b^{-1}y )
\end{equation}
Iterating $(7.7)$ we obtain:
$$
(HF)e(b^{-k}x, b^{-k}y ) = b^k (HF)e(x,y) + \sum_{n=0}^{k-1}b^n
(H\Phi )e(b^{n-k}x, b^{n-k}y )
$$
for any non negative integer $k=1,2,\ldots$. Therefore,
\begin{equation}
|(HF)e(b^{-k}x, b^{-k}y)| \geq b^k ( |(HF)e(x,y)| - \frac{||H\Phi
||}{b - 1} )
\end{equation}
From Proposition $7.2$ there exists $0< \eta < 1$ so that for any
unitary vector $e \in \R^{d+1}$ there exist $x_0 \in Q_0 = [-1/2 ,
1/2]^d$ and $\eta < y_0 < 1$ such that
\begin{equation}
|(HF)e(x_0 , y_0 )| > \frac{3||H\Phi ||_\infty}{b -1} \, .
\end{equation}
Now, from the previous remarks, the function $x \rightarrow (HF)e(x,
y_0 )$ is a.p , uniformly in $e$ and $y_0$ so we can choose $l =
l(\phi , b, d) \geq 1 $ such that for any cube $Q'\subset \R^d$ with
$l(Q' ) \geq l$ there is an almost period $\tau \in Q'$ relative to
$\epsilon = ||H\Phi ||_\infty / (b -1)$ that is uniform respect to
$e$. In other words
\begin{equation}
|(HF)e( x+\tau , y_0 ) - (HF)e(x, y_0 ) | \leq \frac{||H\Phi
||_\infty }{b-1}
\end{equation}
for any $x\in \R^d$ and any unitary $e\in \R^{d+1}$. Fix a non
negative integer $k$ such that $b^{-k} l <  l(Q) \leq b^{-k + 1} l$.
Let $Q' = b^k Q - x_0$ where , for $a> 0$, $aQ = \{ ax : \, x\in Q
\}$. Then $l(Q') \geq l$ so there is $\tau \in Q'$ satisfying
$(7.10)$. Observe that $b^{-k}(x_0 + \tau ) \in Q$. From
$(7.8)$,$(7.9)$ and $(7.10)$ we get
$$
b^{-k}y_0 |(HF)e(b^{-k}(x_0 + \tau , y_0 )) | \geq y_0 \Big [
|(HF)e(x_0 + \tau , y_0 )| - \frac{||H\Phi ||_\infty}{b -1} \Big
]\geq \eta \frac{||H\Phi ||_\infty}{b -1}
$$
On the other hand
$$
\frac{\eta}{b l}l(Q)< b^{-k}y_0 < l(Q)
$$
so the conclusion follows taking $\delta = \eta / b l $ and $c= \eta
||H\Phi ||_\infty / (b -1)$.
\end{proof}

\begin{cor}
Let $b$, $\phi$, $f$, $F$ be as in Lemma $7.3$. Then there are
 $N = N ( \phi, b, d) \in \N$, $N \geq 2$ and $\gamma = \gamma
(\phi ,b, d ) \geq 1$ such that $\nabla F$ verifies a  $1/N$-weak QR
condition with constant $\gamma$, that is, for any cube $Q\subset
\R^d$
$$
\int_{C_{1/N}(Q)}\max_{|e|=1}\, |HF(x,y)|^2 dxdy \leq \gamma^2
\min_{|e|=1}\int_{C_{1/N}(Q)} |(HF)e(x,y)|^2 dxdy
$$
\end{cor}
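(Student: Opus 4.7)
The weak quasiregularity condition (4.2) requires the ratio of the $\max$-integral to the $\min$-integral over the Carleson box $C_{1/N}(Q)$ to be uniformly bounded in $Q$. The strategy is to show that, for an appropriate $N$, both integrals are comparable to $l(Q)^{d-1}$, with constants depending only on $\phi$, $b$ and $d$.

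For the upper bound on the left-hand side of (4.2), I would invoke Proposition 7.1, which gives $\|HF(x,y)\| \leq C'/y$ with $C'=C'(\phi,b,d)$. Since $\max_{|e|=1}|(HF)e(x,y)| \leq \|HF(x,y)\|$, the direct computation
$$\int_{C_{1/N}(Q)} \frac{dx\, dy}{y^2} \;=\; (N-1)\, l(Q)^{d-1}$$
furnishes an upper bound of the form $A(N,\phi,b,d)\, l(Q)^{d-1}$.

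The lower bound on the minimum side is the heart of the matter. To guarantee that the good point supplied by Lemma 7.3 sits safely inside the Carleson box, I would apply the lemma not to $Q$ but to the concentric subcube $Q'\subset Q$ of sidelength $l(Q)/2$: this produces, for every unit vector $e\in \R^{d+1}$, a point $(x_0,y_0)\in C_\delta(Q')$ with $|(HF)e(x_0,y_0)| \geq c/y_0$, where $c,\delta$ are the constants of Lemma 7.3. Every entry of $HF$ is a second partial of the harmonic function $F$, hence harmonic, and is globally bounded by $C'/y$ by Proposition 7.1; Cauchy's estimate therefore yields $\|\nabla(HF)(x,y)\|\leq C''/y^2$ on $\R^{d+1}_+$, and consequently there is a small $r_0=r_0(\phi,b,d)>0$ such that $|(HF)e| \geq c/(2y_0)$ throughout the ball $B_0=B((x_0,y_0),r_0 y_0)$. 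The reason for passing from $Q$ to $Q'$ is now transparent: $x_0$ lies at distance at least $l(Q)/4$ from $\partial Q$, while $y_0\in[\delta l(Q)/2,l(Q)/2]$, so choosing $N$ sufficiently large (depending only on $\delta$ and $r_0$) forces $B_0\subset C_{1/N}(Q)$. Integrating gives
$$\int_{C_{1/N}(Q)} |(HF)e(x,y)|^2\, dx\, dy \;\geq\; \int_{B_0} |(HF)e|^2 \;\geq\; c_1\, y_0^{d-1} \;\geq\; c_2\, l(Q)^{d-1},$$
uniformly in $e$. Dividing the two bounds yields the desired constant $\gamma=\gamma(\phi,b,d)$.

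The main technical obstacle is the geometric confinement of $B_0$ to $C_{1/N}(Q)$, which is what forces the replacement of $Q$ by the interior subcube $Q'$ together with a large enough $N$. Promoting the pointwise lower bound of Lemma 7.3 to an $L^2$ lower bound is otherwise a routine application of Cauchy's estimate combined with the pointwise bound on $HF$ supplied by Proposition 7.1.
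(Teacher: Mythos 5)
Your proposal is correct and follows essentially the same path as the paper: both invoke Lemma 7.3 on a concentric cube of half the sidelength to keep the witness point away from $\partial Q$, both confine a ball of comparable radius inside $C_{1/N}(Q)$ for a suitably large $N$, and both use Proposition 7.1 for the upper bound. The one technical variation is in how the pointwise lower bound from Lemma 7.3 is upgraded to an $L^2$ lower bound: the paper applies the sub-mean-value inequality directly to the subharmonic function $|(HF)e|^2$ on the ball, whereas you use a Cauchy-type gradient estimate (derived from Proposition 7.1 and harmonicity of the entries of $HF$) to show the lower bound persists pointwise on a ball of radius $r_0 y_0$ and then integrate; both are standard and lead to the same constant structure $\gamma = \gamma(\phi,b,d)$.
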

\begin{proof}
Given a cube $Q$ in $\R^d$, let $ \widetilde{Q}$ be the cube with
the same center and half its sidelength.   From Lemma $7.3$, there
are $0 < \delta = \delta (\lambda , \phi ) < 1$, $c = c( \phi , b )
> 0$ such that for any unitary vector $e \in \R^{d+1}$ there exists
a point $(x,y) \in C_{\delta}(\widetilde{Q})$ such that
$$
|(HF)e(x,y)| \geq \frac{2c}{l(Q)}
$$
Let $B \subset \R^{d+1}_+ $ be the ball centered at $(x,y)$ of
radius $\delta l(Q) / 4$. Then from subharmonicity,
$$
|(HF)e(x,y)|^2 \leq  \frac{C_1}{l(Q)^{d+1}} \int_{B} |(HF)e|^2
$$
for some $C_1 = C_1 (d)$. Choose $N\in \N $ such that $\frac{1}{N}
\leq \frac{\delta}{4} \leq \frac{1}{N-1}$. Then  $B\subset
C_{1/N}(Q)$ and, combining the two previous inequalities we get
\begin{equation}
C_2 (l(Q))^{d-1} \leq \min _{|e|=1} \, \int_{C_{1/N}(Q)} |(HF)e|^2
\end{equation}
for some constant $C_2 = C_2 (b , d, \phi )$. On the other hand,
from Proposition $7.1$,
$$
\sup_{C_{1/N}(Q)} ||HF||^2 \leq \frac{C_3}{(l(Q))^2}
$$
where $C_3 = C_3(b , d, \phi )$. In particular
\begin{equation}
\int_{C_{1/N}(Q)} ||HF||^2 \leq C_4 (l(Q))^{d-1}
\end{equation}
The conclusion follows from $(7.11)$ and $(7.12)$.

\end{proof}

\section{Proof of Theorem $3$ }



We prove now Theorem $3$.

\begin{proof}



The fact that  $f_{b, \phi} \in \Lambda_{*}(\R^d )$ follows  from
Proposition $7.1$. The fact that $f_{b, \phi}$ is nowhere
differentiable follows from Theorem $3.1$ in [10]. Pick  $x_0 \in
\R^d$. If $x_0 \neq 0$, define $e = x_0 / |x_0 |$ and observe that
the one-variable function $t\to f(te)$ satisfies the hypothesis of
Theorem $3.1$ in [10] therefore is nowhere differentiable.  In
particular $f$ has no directional derivative at $x_0$ along the
radial direction $e$. If $x_0 = 0$ the same argument shows that $f$
has no directional derivative at $0$ along any direction. This
proves part $1$.

Let $0 < \delta <1 $ and $c>0$ be the constants appearing in  Lemma
$7.3$. Fix $x_0 \in \R^d$ and a unitary vector $e \in \R^d$. Let
$Q_k$ be the cube centered at $x_0$ and sidelength $\delta^{k}$.
 From Lemma $7.3$, there
is $(x,y)\in Q_k \times [\delta^{k+1}, \delta^k ]$ such that
$\displaystyle | HF \cdot e (x,y)| \geq c / y $. Let $B_k$ be the
ball centered at $(x,y)$ of radius $\delta^k / 2$. From
subharmonicity,
\begin{equation}
\frac{C}{\delta^{2k}} \leq \frac{1}{\delta^{(k+1)(d+1)}}\int_{B_k}
\big |\nabla ( D_e F ) \big |^2 dx dy
\end{equation}
where $D_e F$ means the derivative of $F$ in the direction $e$ and
$C= C(d, b, \phi)$. It is easy to check that
\begin{equation}
\displaystyle B_k \subset Q'_k \times [\frac{\delta^{k+1}}{2},
\delta^k - \frac{\delta^{k+1}}{2}] \subset \Gamma (x_0 )
\end{equation}
where
$$
\Gamma (x_0 ) = \{(x, y) \in \R^{d+1}_+ \, : |x-x_0 | \leq ( 1 +
\frac{\sqrt d}{\delta} ) y \}
$$
On the other hand, from $(8.1)$ and $(8.2)$ we get
\begin{equation}
\int_B  y^{1-d} \, |\nabla ( D_e F ) (x,y) |^2 dx dy \geq C >0
\end{equation}
for some $C>0$ independent of $k$.  Since each cone $\Gamma (x_0)$
contains infinitely many disjoint balls $B_k$, we get from $(8.3)$
that
$$
\int_{\Gamma (x_0 )} y^{1-d}\, \big |\nabla ( D_e F) (x,y) \big |^2
dx dy = \infty
$$
which implies that the Area function of $D_e F$ for cones of some
fixed aperture is infinite for all $x_0 \in \R^d$. From the Area
version of the Local Fatou Theorem for harmonic functions ( [16,
Chap. VII, Thms. $3$, $4$]), it follows that $D_e F$ is
non-tangentially unbounded at almost every point $x\in \R^d$.
Theorem $3$, part $2$ follows now from Proposition $2.1$ and
Corollary $2.4$.

\

To prove part $3$ of Theorem $3$, let $b$, $\phi$ and $f_{b, \phi}$
as in the statement of the theorem and let $F$ be the Poisson
extension of $f_{b, \phi}$. Let $N\in \N$ and $\gamma \geq 1$ be as
in Corollary $7.4$. The result follows from corollaries $3.8$ and
$4.5$.

\end{proof}

\section{Remarks and questions}

\begin{enumerate}

\item Quasi-regular mappings can be understood as a sort of higher dimensional analogue
 of holomorphic mappings. On the other hand, in harmonic analysis, the higher dimensional analogue
  of holomorphic mappings are the gradients of harmonic functions, or equivalently, systems
  of conjugate harmonic functions (see [16, p. 65]). Can one describe the harmonic functions
  in an upper half-space whose gradient is (weakly) quasi-regular?
   Quasi-regularity is typically an involved property to handle
with. If $u$ is harmonic in $\R^d$ , for $d\geq 3$ and $u$ is
independent of at least one direction, then it is clear that $\nabla
u$ cannot be quasi-regular. To which extent is this the only
obstacle to the quasi-regularity of a harmonic gradient?

\item The weak quasi-regularity condition used in section
$4$ raises some natural but subtle questions. Suppose that $\nabla F
$ is a harmonic gradient. It is not clear (and probably false) that
if $\nabla F$ satisfies a $\delta$- weak QR condition then it also
satisfies a $\delta'$-weak QR condition for $\delta' \simeq \delta$.
The way to show that $\nabla F$ satisfies a weak QR condition
(Corollary $7.4$) relies on the functional equation $(7.5)$ and on a
certain
 lower uniform bound for the
Hessian $HF$ (Lemma $7.3)$ together with sub-harmonicity. This is a
sort of by-pass that avoids the problem of comparing directly the
maximal and the minimal distortions of $\nabla F$. Because of this,
we have been unable to adapt the method in part $3$ of Theorem $3$,
to cover the case of lacunary series in $\lambda_{*}$ of the form
$$
f(x) = \sum_{n=0}^{\infty} {\varepsilon}_n b^{-n} \, \phi (b^n x)
$$
where $\{{\varepsilon}_n \}$ is a sequence of real numbers tending
to $0$.

\item The dichotomy result given by Proposition $5.1$ can
dramatically fail if either harmonicity or quasi-regularity are
dropped from the hypothesis. Indeed, if $u$ is harmonic in $\R^d$,
$d\geq 2$ and $u$ does not depend on one of the variables then
$\nabla u$ is not quasi-regular (even in the weak sense) and the
conclusions of Proposition $5.1$ obviously do not hold. On the other
hand, there is a bounded quasi-regular mapping $g: \R^2_+ \to
\mathbb{C}$ that fails to have vertical limit al almost all $x\in
\R$, as the following construction shows. Let $h$ be an increasing,
singular, quasi-symmetric homeomorphism of $\R$ into $\R$ ( see [4],
Theorem $3$). This implies the existence of $E \subset \R $ such
that $m_1 (\R \setminus E) = m_1 (h(E)) = 0$. Extend $h$ to a
quasi-conformal map $H: \R^2_+ \to \R^2_+ $ ([3], Theorem $1$). Now
take a  bounded analytic function $f: \R^2_+ \to \mathbb{C} $ such
that for any $x\in h(E)$, $f$ fails to have limit along any curve
ending at $x$ ([6], Lemma $1$, Ch.$2$). The statement follows by
taking  $g = f\circ H $ (see [2], Theorem $5.5.1$ for the
quasi-regularity of $g$).

\item Even if $d=1$, the authors wonder which part of the results in [10] can be saved
if the base function $\phi$ is only assumed to be Lipschitz. On the
other hand it is also natural to ask under what extent the
periodicity or almost periodicity of $\phi$ is essential for the
nowhere differentiability of the Weierstrass function.

\end{enumerate}


\begin{thebibliography}{W}

\bibitem[1]{An}\textsc{Anderson, J.M.; Pitt, L.D.} \, \emph{Probabilistic behaviour of functions in the Zygmund spaces $\Lambda^*$ and $\lambda^*$}.
Proc. London Math. Soc. (3) 59 (1989), no. 3, 558 - 592.

\bibitem[2]{As}\textsc{Astala, K., Iwaniec, T., Martin, G.}\, \emph{Elliptic partial differential equations and quasiconformal mappings in the plane.}
Princeton Mathematical Series, 48. Princeton University Press,
Princeton, NJ, 2009.

\bibitem[3]{Be}\textsc{Besicovitch, A.S.} \,\emph{ Almost periodic
functions.} Dover. 1955.

\bibitem[4]{Beu}\textsc{Beurling, A., Ahlfors, L.V.} \, \emph{The boundary correspondence under quasiconformal mappings.}
Acta Math. 96 (1956), 125-142.

\bibitem[5]{Ca}\textsc{Carleson, L. }\, \emph{On the existence of boundary values for harmonic functions
in several variables.} Ark. Mat. 4  (1962), 393 – 399.

\bibitem[6]{Col}\textsc{Collingwood, E.F., Lohwater, A.J.}\, \emph{The theory of cluster
sets.} Cambridge University Press. 1966.

\bibitem[7]{D}\textsc{Donaire, J.J., Llorente, J.G., Nicolau, A.} \,
\emph{Differentiability of functions in the Zygmund class.}
arXiv:1012.3545v1 [math.CA].


\bibitem[8]{Gr}\textsc{Grafakos, L.}\, \emph{Classical Fourier
Analysis.} Second edition. Graduate Texts inMathematics, 249.
Springer, New York, 2008.

\bibitem[9]{Ha}\textsc{Hardy, G.H.}\, \emph{Weierstrass' non-differentiable
function.} Trans. Amer. Math. Soc. 17 (1916), no. 3, 301-325.

\bibitem[10]{He}\textsc{Heurteaux, Y.}\, \emph{Weierstrass functions in Zygmund's
class.} Proc. Amer. Math. Soc. 133 (2005), no. 9, 2711-2720
(electronic).


\bibitem[11]{K}\textsc{Krantz, S.G.}\, \emph{Lipschitz spaces, smoothness of functions and approximation
theory.} Exposition. Math. 1 (1983), no. 3, 193-260.

\bibitem[12]{Ma1}\textsc{Makarov, N.G.}\, \emph{On the radial behavior of Bloch
functions.} (Russian) Dokl. Akad. Nauk SSSR 309 (1989), no. 2,
275-278; translation in Soviet Math. Dokl. 40 (1990), no. 3,
505-508.

\bibitem[13]{Ma2}\textsc{Makarov, N.G.}\, \emph{Probability methods in the theory of conformal
mappings.} (Russian) Algebra i Analiz 1 (1989), no. 1, 3--59;
translation in Leningrad Math. J. 1 (1990), no. 1, 1-56.

\bibitem[14]{P}\textsc{Pommerenke, Ch.}\, \emph{Boundary behaviour of conformal
maps.} Springer-Verlag. 1991.


\bibitem[15]{R}\textsc{Rickman, S.}\, \emph{Quasiregular
mappings.} Ergebnisse der Mathematik und ihrer Grenzgebiete (3)
[Results in Mathematics and Related Areas (3)], 26. Springer-Verlag,
Berlin, 1993.

\bibitem[16]{S}\textsc{Stein, E.M.}\, \emph{Singular integrals and differentiability properties of
functions.} Princeton University Press. 1970.

\bibitem[17]{T}\textsc{Thim, J.}\, \emph{Continous nowhere differentiable
functions.} Master's thesis. Lule{\aa} University of Technology.
2003.

\bibitem[18]{Z1}\textsc{Zygmund, A.} \, \emph{Smooth functions.} Duke Math. J. 12, (1945). 47-76.

\bibitem[19]{Z2}\textsc{Zygmund, A.}\, \emph{Trigonometric series.} Trigonometric series. Vol. I, II. Third
edition. Cambridge Mathematical Library. Cambridge University Press,
Cambridge, 2002.




\end{thebibliography}
\end{document}